\documentclass[a4paper,11pt]{article}
\usepackage[utf8x]{inputenc}

\usepackage{a4wide}
\usepackage{amssymb}
\usepackage{amsmath}
\usepackage{amsthm}
\usepackage[mathscr]{euscript}
\usepackage[normalem]{ulem}
\usepackage[colorlinks=true]{hyperref}
\usepackage[usenames,dvipsnames]{xcolor}
\usepackage{appendix}
\usepackage{tikz}
\usetikzlibrary{arrows,angles}
\usetikzlibrary{matrix}
\usetikzlibrary{decorations}
\usetikzlibrary{arrows,calc,shapes,decorations.pathreplacing}
\usepackage{tikz-cd}
\usepackage{todonotes}
\usepackage{verbatim}

\numberwithin{equation}{section}
\theoremstyle{definition}

\newtheorem{Definition}{Definition}[section]

\newtheorem{Remark}[Definition]{Remark}
\newtheorem{Example}[Definition]{Example}

\theoremstyle{plain}

\newtheorem{Theorem}[Definition]{Theorem}
\newtheorem{Proposition}[Definition]{Proposition}
\newtheorem{Lemma}[Definition]{Lemma}
\newtheorem{Corollary}[Definition]{Corollary}

\newcommand{\R}{\mathbb R}

\newcommand{\Fl}{\text{\rm Fl}}

\usepackage[xcolor]{changebar}
\cbcolor{blue}
\DeclareMathOperator{\Ric}{Ric}

\newcommand{\cM}{\mathcal M}
\newcommand{\cR}{\mathcal R}

\usepackage[inline]{enumitem}
\newcommand{\enumlabelformat}{\roman}
\newcommand{\enumlabelfont}[1]{#1}
\newlength{\thelabelsep}
\setlist{labelsep=\thelabelsep}
\setlist[enumerate,1]{font=\enumlabelfont,label=(\enumlabelformat*),leftmargin=2.5em}
\setlist[itemize]{leftmargin=2.5em,label=$-$}

\newcounter{inlineenum}
\renewcommand{\theinlineenum}{\enumlabelformat{inlineenum}}

\let\epsilon\varepsilon
\let\phi\varphi

\makeatletter
\let\save@mathaccent\mathaccent
\newcommand*\if@single[3]{%
  \setbox0\hbox{${\mathaccent"0362{#1}}^H$}%
  \setbox2\hbox{${\mathaccent"0362{\kern0pt#1}}^H$}%
  \ifdim\ht0=\ht2 #3\else #2\fi
  }
\newcommand*\rel@kern[1]{\kern#1\dimexpr\macc@kerna}
\newcommand*\widebar[1]{\@ifnextchar^{{\wide@bar{#1}{0}}}{\wide@bar{#1}{1}}}
\newcommand*\wide@bar[2]{\if@single{#1}{\wide@bar@{#1}{#2}{1}}{\wide@bar@{#1}{#2}{2}}}
\newcommand*\wide@bar@[3]{%
  \begingroup
  \def\mathaccent##1##2{%
    \let\mathaccent\save@mathaccent
    \if#32 \let\macc@nucleus\first@char \fi
    \setbox\z@\hbox{$\macc@style{\macc@nucleus}_{}$}%
    \setbox\tw@\hbox{$\macc@style{\macc@nucleus}{}_{}$}%
    \dimen@\wd\tw@
    \advance\dimen@-\wd\z@
    \divide\dimen@ 3
    \@tempdima\wd\tw@
    \advance\@tempdima-\scriptspace
    \divide\@tempdima 10
    \advance\dimen@-\@tempdima
    \ifdim\dimen@>\z@ \dimen@0pt\fi
    \rel@kern{0.6}\kern-\dimen@
    \if#31
      \overline{\rel@kern{-0.6}\kern\dimen@\macc@nucleus\rel@kern{0.4}\kern\dimen@}%
      \advance\dimen@0.4\dimexpr\macc@kerna
      \let\final@kern#2%
      \ifdim\dimen@<\z@ \let\final@kern1\fi
      \if\final@kern1 \kern-\dimen@\fi
    \else
      \overline{\rel@kern{-0.6}\kern\dimen@#1}%
    \fi
  }%
  \macc@depth\@ne
  \let\math@bgroup\@empty \let\math@egroup\macc@set@skewchar
  \mathsurround\z@ \frozen@everymath{\mathgroup\macc@group\relax}%
  \macc@set@skewchar\relax
  \let\mathaccentV\macc@nested@a
  \if#31
    \macc@nested@a\relax111{#1}%
  \else
    \def\gobble@till@marker##1\endmarker{}%
    \futurelet\first@char\gobble@till@marker#1\endmarker
    \ifcat\noexpand\first@char A\else
      \def\first@char{}%
    \fi
    \macc@nested@a\relax111{\first@char}%
  \fi
  \endgroup
}
\makeatother

\DeclareMathOperator{\md}{md}
\DeclareMathOperator{\Riem}{Riem}

\title{Almost-splitting of spacetimes under timelike sectional curvature bounds}

\author{Nicola Gigli\thanks{{\tt ngigli@sissa.it}, SISSA, Trieste.}\\Melanie Graf\thanks{{\tt melanie.graf@uni-hamburg.de}, Department of Mathematics, University of Hamburg.}\\Robert J.\ McCann\thanks{{\tt mccann@math.toronto.edu}, Departments of Mathematics and of Economics, University of Toronto.}\\Argam Ohanyan\thanks{{\tt argam.ohanyan@utoronto.ca}, Department of Mathematics, University of Toronto.}\\Eric Woolgar\thanks{{\tt ewoolgar@ualberta.ca}, Department of Mathematical and Statistical Sciences, University of Alberta.}\\Matteo Zanardini\thanks{{\tt mzanardi@sissa.it}, SISSA, Trieste.}
}
\begin{document}

\date{\today}


\maketitle

\begin{abstract}
We establish an almost-splitting theorem for spacetimes under timelike sectional curvature bounds in the spirit of the celebrated Riemannian result due to Cheeger and Colding. More concretely, we show that if a globally hyperbolic spacetime with timelike sectional curvature bounded below by a nonpositive constant close to zero contains a long maximizing timelike segment, then causal diamonds along that segment are close in the Lorentzian Gromov--Hausdorff sense of Minguzzi--Suhr to those in a product metric spacetime. Our result can be interpreted as a quantitative analogue of the Lorentzian splitting theorem for spacetimes with nonnegative timelike sectional curvature by Beem--Ehrlich--Markvorsen--Galloway.
\vspace{1em}

\noindent
\emph{Keywords:} Timelike sectional curvature bounds, rigidity, splitting, almost-splitting
\medskip

\noindent
\emph{MSC2020:} 83C20, 53B30, 53C50

\end{abstract}
\tableofcontents

\section{Introduction}
\label{section: introduction}

The celebrated results of Hawking and Penrose \cite{PenroseSingularity, HawkingSingularity, HawkingSingularityII, HawkingPenroseSingularity} assert that spacetimes are generically singular, i.e., under reasonable curvature and initial data assumptions, there exist incomplete causal geodesics.\ We refer to the standard sources (Hawking--Ellis \cite{HawkingEllis}, O'Neill \cite{bookONeill} and Beem--Ehrlich--Easley \cite{BeemBook}) for a detailed exposition of these results. 
It turns out that if an infinitely long maximizing timelike geodesic exists in a globally hyperbolic spacetime with suitably nonnegative curvature, then no such geodesic incompleteness occurs, and in fact the dynamics of the spacetime trivialize entirely. 
This summarizes (one well-known formulation of) the Lorentzian timelike splitting theorem.

The history of splitting theorems in Lorentzian geometry goes back to Yau \cite{YauProblemSection}, who asked whether a Lorentzian analogue of the Cheeger--Gromoll \cite{CheegerGromoll} splitting theorem from Riemannian geometry could be proven for timelike geodesically complete spacetimes with nonnegative timelike Ricci curvature containing a timelike line. As timelike geodesic completeness is usually not the suitable completeness notion for comparison-geometric purposes, splitting theorems have often been studied under global hyperbolicity instead. The first milestone was the work of Beem--Ehrlich--Markvorsen--Galloway \cite{BEMG84, BEMG85}, who proved a Lorentzian splitting theorem for globally hyperbolic spacetimes under nonnegative timelike sectional curvature (rather than Ricci) bounds. The improvement to nonnegative timelike Ricci bounds (i.e., the physically relevant strong energy condition) was made by Eschenburg \cite{Eschenburgsplitting}, but he had to assume both global hyperbolicity and timelike geodesic completeness. The latter assumption in Eschenburg's result was subsequently removed by Galloway \cite{Galloway89splitting}, and finally Yau's initial question (i.e., assuming timelike geodesic completeness rather than global hyperbolicity) was answered in the affirmative by Newman \cite{Newmansplitting}. (Newman's proof was later simplified by Galloway--Horta \cite{GallowayHorta95}). Versions of Galloway's and Newman's results under Bakry--Émery Ricci curvature bounds were subsequently established by Case \cite{CaseSingularitySplitting} and Woolgar--Wylie \cite{WoolgarWylieSplittingI, WoolgarWylieSplittingII}. Let us also mention the recent method of proving Lorentzian splitting theorems using the nonlinear and (nonuniformly) elliptic $p$-d'Alembertian, which first appeared in Mondino--Suhr \cite{MondinoSuhr} and was subsequently systematically investigated in a general nonsmooth spacetime setting in the work of Beran--Braun--Calisti--Gigli--McCann--Ohanyan--Rott--Sämann \cite{beran2024nonlinear}. It was first used to give a simpler proof of both Galloway's and Newman's result by Braun--Gigli--McCann--Ohanyan--Sämann \cite{QuintetI}, then generalized to ($C^1$-weighted) metrics which are $C^1$-regular by the same authors \cite{QuintetII}, and to the Lorentz--Finsler setting by Caponio--Ohanyan--Ohta \cite{CaponioOhanyanOhta}.

In Riemannian signature, the groundbreaking work of Cheeger--Colding \cite{CheegerColding96} establishes a quantitative version of the Cheeger--Gromoll theorem. More specifically, it states that if the Ricci curvature is almost nonnegative, and if there exists a long almost-minimizing curve, then sufficiently small balls around the midpoint of that curve are close in the Gromov--Hausdorff sense to balls in a product metric space (which need not be a smooth manifold). The error goes to zero as the inverse of the length, the minimization error, and the curvature bound below all go to zero. The result of Cheeger--Colding highlights the fact that non-smooth metric spaces arise even if the space being studied is a smooth Riemannian manifold, and is the starting point of the rich theory of Ricci limit spaces \cite{RicciLimit1, RicciLimit2, RicciLimit3}.

Up to now, no quantitative splitting theorem has been available for spacetimes. The purpose of this work is to establish the first such result. More specifically, we will prove a quantitative version of the splitting theorem of Beem--Ehrlich--Markvorsen--Galloway under nonnegative timelike sectional curvature\footnote{We use the signature $(+,-,\dots,-)$ rather than the more common $(-,+,\dots,+)$. Hence, nonnegative timelike sectional curvature for us corresponds to nonpositive timelike sectional curvature in \cite{BEMG84, BEMG85} and other works. We refer to Section \ref{subsection: timelikesectional} for more details.}, which we now recall:

\begin{Theorem}[Beem--Ehrlich--Markvorsen--Galloway \cite{BEMG84, BEMG85}]
Let $(\mathcal M,\mathsf g)$ be a globally hyperbolic spacetime with nonnegative timelike sectional curvature that contains a complete maximizing timelike geodesic $\Upsilon :\R \to \mathcal M$ (i.e., a complete timelike line). Then there exists a complete Riemannian manifold $(\mathcal S,\mathsf h)$ of nonnegative sectional curvature such that $(\mathcal M, \mathsf g) \cong (\R \times S, dt^2 - \mathsf h)$. In particular, $(\mathcal M,\mathsf g)$ is geodesically complete.
\end{Theorem}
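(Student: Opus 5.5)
The plan follows the classical Busemann-function route, adapted to timelike sectional curvature bounds; this is the strategy of \cite{BEMG84, BEMG85}. For the complete timelike line $\Upsilon$ we define the future and past Busemann functions
\[
b^\pm(x)=\lim_{t\to\infty}\bigl(t-\tau(x,\Upsilon(\pm t))\bigr)\quad\text{(with the appropriate sign convention for }b^-\text{)}.
\]
Here $\tau$ is the time separation, which is finite and continuous by global hyperbolicity. The reverse triangle inequality gives monotonicity in $t$ and the inequality $b^++b^-\ge 0$, with equality along $\Upsilon$. Near $\Upsilon$ the limits are finite, and we work on the set $I(\Upsilon)=I^+(\Upsilon)\cap I^-(\Upsilon)$. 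At each point $x$ near $\Upsilon$ we construct asymptotic timelike rays, namely limits of maximizers from $x$ to $\Upsilon(\pm t)$. Global hyperbolicity provides the limit curve theorems needed, and timelike completeness near the line follows from these constructions.

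The key analytic step uses the sectional curvature bound directly rather than the Raychaudhuri equation. Nonnegative timelike sectional curvature yields Toponogov-type triangle comparison with Minkowski space, and equivalently a Hessian comparison for the distance $\tau(\cdot,p)$. The Hessian of $x\mapsto\tau(x,\Upsilon(t))$ is controlled in the support sense by that of Minkowski space, which is of order $1/t$. Passing $t\to\infty$ along support functions built from asymptotes, $b^+$ and $b^-$ become semiconcave (resp.\ semiconvex) with vanishing Hessian bound. Combined with $b^++b^-\ge0$ and equality on $\Upsilon$, a maximum-principle argument forces $b^++b^-\equiv0$ on a neighborhood. This argument is Calabi--Hopf for support functions, or more simply the fact that a function that is both concave and convex in the support sense is affine along geodesics. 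So $b:=b^+$ is smooth with $\nabla^2 b=0$. This smoothness and rigidity step is the main obstacle: it requires lower and upper support functions at every point, proper handling of the possible lack of maximizers, and a proof that the asymptotes from $b^+$ and $b^-$ fit together into a line through $x$. I would first prove the comparison locally inside compact causal diamonds, using global hyperbolicity for compactness, and then propagate.

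Once $\nabla b$ is a parallel unit timelike vector field on the open set where the argument works, its integral curves are timelike lines. The argument can therefore be repeated around each of them, and an open-closed connectedness argument on $\cM$ extends $b$ to all of $\cM$. A parallel, hence Killing and hypersurface-orthogonal, unit gradient field produces an isometric splitting $(\R\times\mathcal S, dt^2-\mathsf h)$ with $\mathcal S=b^{-1}(0)$ by de Rham-type reasoning. Global hyperbolicity shows that $\mathcal S$ is a Cauchy surface and that $\mathsf h$ is complete, since a bounded incomplete $\mathsf h$-geodesic would give a noncompact causal diamond. Finally, the timelike sectional curvature of a product containing the plane spanned by $\partial_t$ and $v$ is zero, while planes spanned by $\partial_t+v$ and $w$ with $v,w$ tangent to $\mathcal S$ have curvature given by the $\mathsf h$-sectional curvature (with sign). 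The timelike bound therefore transfers to $\sec_{\mathsf h}\ge0$. Geodesic completeness follows because $\R$ and $(\mathcal S,\mathsf h)$ are complete.
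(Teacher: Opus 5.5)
The paper does not prove this theorem. It cites \cite{BEMG84, BEMG85} and only remarks that Harris's triangle comparison is used in an essential way; Section~\ref{Section: Almost splitting} is a quantitative version of that route. Your architecture is the standard one: Busemann functions, $b^++b^-=0$, a parallel gradient, a de Rham-type splitting, completeness of $\mathsf h$ from compact diamonds, and the curvature of the factor. The last steps are fine.

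The analytic core has two problems as written. The first is minor. With your normalization $b^++b^-\ge 0$, \emph{both} $b^+$ and $b^-$ are concave. Each has upper support functions $b^+(\alpha_r)-\ell(\cdot,\alpha_r)$, resp.\ $b^-(\beta_r)-\ell(\beta_r,\cdot)$, built from future resp.\ past asymptotes, with Hessian $\le O(1/r)$. So $b^++b^-$ is concave, nonnegative and zero on $\Upsilon$, and afterwards $b^+=-b^-$ is both concave and convex. If one is concave and the other convex, as you state, the interior-minimum argument has nothing to act on.

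The second problem is a genuine gap. A Hessian bound tending to $0$ at every point of a \emph{fixed} neighbourhood requires asymptotes of arbitrarily large length at each such point. Your globalization (``repeat the argument around each integral curve'') also requires those integral curves to be \emph{complete} lines. Your justification, that ``timelike completeness near the line follows from these constructions'', does not hold. Limit-curve theorems under global hyperbolicity give future-inextendible rays, not complete ones. Completeness is a consequence of the rigidity and has to be proved. Without a timelike completeness hypothesis this is exactly the delicate point, and it is where Eschenburg used timelike geodesic completeness in the Ricci case. The closedness half of your open--closed argument is likewise not addressed.

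Under a \emph{sectional} bound you do not need asymptotes or a maximum principle for this step, which is why Harris's comparison is the natural tool. Theorem~\ref{Theorem: Toponogov convexity version} with $\delta=0$ gives convexity of $\ell(\Upsilon_{-t},\cdot)^2/2$ and $\ell(\cdot,\Upsilon_t)^2/2$ along \emph{every} timelike geodesic, not only at smooth points. Expand both along a maximizer from a point $\Upsilon_{-a}$ of the line, where they are smooth with gradients $\pm\Upsilon'$. This gives $0\le b^+_t+b^-_t\le C(a)/t$ on $J(\Upsilon_{-a},\Upsilon_a)$, which is the case $\delta=0$ of Proposition~\ref{Prop: Estimate on Busemann sum}. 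Hence $b^++b^-=0$ on all of $I(\Upsilon)$ at once. The same convexity, with a bounded-difference-quotient argument for the derivatives, makes $b^\pm$ concave along timelike geodesics. So $b=b^+=-b^-$ is affine along every timelike geodesic in $I(\Upsilon)$. It is also $1$-steep and causally convex, hence locally Lipschitz, and therefore smooth with parallel gradient.

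Completeness then comes from causal monotonicity rather than from asymptotes. The local flow of $\nabla b$ acts by isometries and translates $\Upsilon$. It therefore maps $J(\Upsilon_a,\Upsilon_b)$ into $J(\Upsilon_{a+s},\Upsilon_{b+s})$, a compact subset of $I(\Upsilon)$; this is the exact analogue of Lemma~\ref{lem: monotonicity 1}. So integral curves cannot escape, and $I(\Upsilon)\cong\R\times\mathcal S$. Your compact-diamond argument then makes $\mathsf h$ complete. Finally, a geodesically complete open subset of the connected $\mathcal M$ is closed, so $I(\Upsilon)=\mathcal M$.
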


The proof of this theorem uses in an essential way the Lorentzian Toponogov triangle comparison theorem by Harris \cite{harris1982triangle} (which was recently improved to yield a full equivalence between smooth and synthetic notions by Beran--Kunzinger--Ohanyan--Rott \cite{BKOR24}). It was recently extended to Kunzinger--Sämann's setting of Lorentzian length spaces \cite{KunzingerSaemannLLS} (a Lorentzian analogue of metric length spaces) by Beran--Ohanyan--Rott--Solis \cite{BORS}.

In order to prove a quantitative version of the Beem--Ehrlich--Markvorsen--Galloway theorem, we make use of the Lorentzian Gromov--Hausdorff distance introduced and studied by Minguzzi--Suhr \cite{MinguzziSuhr22} in the context of bounded Lorentzian metric spaces (see also their recent extension to the unbounded setting with Bykov \cite{BykovMinguzziSuhr}, cf. Braun--McCann \cite{BraunMcCann23+}, as well as an alternative approach to Lorentzian Gromov--Hausdorff convergence by Mondino--Sämann \cite{MondinoSaemann2025}). Let us now present our main result. Detailed definitions of the objects which appear can be found in Section \ref{section: preliminary}.

\begin{Theorem}[Almost-splitting under timelike sectional curvature bounds]
Fix $R > 0$. 
For every $\delta \ge 0$ and $L \gg R$,
there exists a quantity $\Psi = \Psi(\delta,L \, | R)$ with 
$\Psi(\delta,L \, | R) \to 0$ as $ (\delta,L^{-1}) \to 0$ 
such that the following holds: If $(\mathcal M,\mathsf g)$ is a globally hyperbolic spacetime with timelike sectional curvature bounded below by $-\delta \leq 0$ and $\Upsilon:[-L,L] \to \mathcal M$ is a maximizing timelike geodesic parametrized by Lorentzian arclength, then there exists a pointed metric space $(X,\mathsf d, x_0)$ (depending on $\delta,R,L,\Upsilon$ and $(\cM,\mathsf g)$) such that
\begin{equation}
    \mathsf d_{LGH}(J^\mathcal M(\Upsilon_{-R},\Upsilon_R), J^{\R \times X}((-R,x_0),(R,x_0))) \leq \Psi(\delta,L\,|\,R)
\end{equation}
where $\mathsf d_{{LGH}}$ and the time-separation on $\R \times X$ are from Definitions 
\ref{D:d_LGH}--\ref{Definition: Lorentzian products} below.
\end{Theorem}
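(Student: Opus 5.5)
The plan is to mimic the Cheeger--Colding strategy, with Busemann-type functions replaced by Lorentzian distance functions to the endpoints. We use the Lorentzian Toponogov comparison of Harris / Beran--Kunzinger--Ohanyan--Rott in place of Laplacian comparison. With timelike sectional curvature bounded below by $-\delta$, time-separation triangles are thicker or thinner (in the appropriate Lorentzian sense) than comparison triangles in the Lorentzian model space of constant curvature $-\delta$.

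The steps are as follows.

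\begin{enumerate}
\item \textbf{Almost-linear time functions.} Set $b^+(x) := L - \tau(x,\Upsilon_L)$ and $b^-(x) := \tau(\Upsilon_{-L},x) - L$ on the diamond $D:=J(\Upsilon_{-R},\Upsilon_R)$. The reverse triangle inequality gives $b^- \le b^+$, with equality on $\Upsilon$. Apply triangle comparison to the triangle $(\Upsilon_{-L}, x, \Upsilon_L)$ together with the comparison hinge at the point of $\Upsilon$. This shows that $b^+-b^-\le \Psi(\delta,L|R)$ on $D$. The excess goes to zero because the relevant triangles are long and thin in the rescaled model. Let $t:=b^+$; it is $\Psi$-close to $b^-$.
\item \textbf{Quadrilateral comparison and a spatial metric.} Let $X$ be the level set $\{t=0\}\cap J(\Upsilon_{-2R},\Upsilon_{2R})$, with base point $x_0=\Upsilon_0$. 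For $p\in D$, let $\pi(p)\in X$ be the point where the maximizing segment from $\Upsilon_{-L}$ (or $\Upsilon_L$) through $p$ meets $\{t=0\}$; this is the Lorentzian analogue of the gradient-flow projection. Define $\mathsf d$ on $X$ by
\[
\mathsf d(x,y)^2 := \sup\bigl\{\, s^2-\tau(\gamma_x(a),\gamma_y(a+s))^2 \,\bigr\}.
\]
Here $\gamma_x$ denotes the almost-vertical segments, and the supremum is taken suitably, or through a limiting construction. Concretely, one can define $\mathsf d(x,y)$ via the model formula applied to $\tau$ between the lifted points $\gamma_x(-T)$ and $\gamma_y(T)$ for a large auxiliary $T\le R$. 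Then show, using comparison for the four-point configurations $(\Upsilon_{\pm L},p,q)$, that
\[
\tau(p,q) \approx \sqrt{(t(q)-t(p))_+^2-\mathsf d(\pi p,\pi q)^2}
\]
holds when the right side is positive (and $\tau\approx0$ otherwise), up to an error $\Psi$. The metric axioms for $\mathsf d$, in particular the triangle inequality, follow up to $\Psi$. One then passes to the metric quotient or to a slight modification to get a genuine metric space.
\item \textbf{The LGH map.} Use the map $\Phi:p\mapsto (t(p),\pi(p))$ from $J^{\cM}(\Upsilon_{-R},\Upsilon_R)$ into $\R\times X$. Step 2 shows that it distorts $\tau$ by at most $\Psi$. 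Almost-surjectivity onto $J^{\R\times X}((-R,x_0),(R,x_0))$ comes from flowing points of $X$ along the almost-vertical segments. Definition~\ref{D:d_LGH} then gives the bound.
\end{enumerate}

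The main obstacle is step 2: extracting a two-sided Pythagorean identity from a one-sided curvature bound. Toponogov only gives one inequality directly, so the idea is to obtain the other by applying comparison twice, once to triangles with vertex $\Upsilon_{-L}$ and once with $\Upsilon_L$, and combining this with the near-equality $b^+\approx b^-$. This is a Lorentzian analogue of the first-variation/cosine-law argument. I would first carry this out in the rescaled model space, where the errors are explicit in $\delta$ and $L^{-1}$, and then transfer the estimate by compactness of $D$ (global hyperbolicity).
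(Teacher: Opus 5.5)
\textbf{Verdict: there is a genuine gap in Step 2.}

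Your Step 1 is fine. Comparing $\Delta(\Upsilon_{-L},x,\Upsilon_L)$ with its model triangle at a point of the long side is a legitimate direct-comparison proof of the Busemann-sum bound in Proposition~\ref{Prop: Estimate on Busemann sum}.

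The gap is Step 2. It carries the whole theorem, and you state it rather than prove it.
\begin{itemize}
\item Triangle comparison gives upper bounds $\ell\le\bar\ell_{-\delta}$ only for pairs of points on the sides of one timelike triangle. Your segments do make this direct when a lifted point is pushed further along its own segment toward $\Upsilon_{-L}$ or $\Upsilon_L$.
\item Lower bounds come only from hinge/convexity estimates. These need two-sided control of a first-order quantity: that the vertical direction at one point and the segment $\sigma$ to the other satisfy $\mathsf g(\nabla F,\dot\sigma)=F(\sigma_1)-F(\sigma_0)+\Psi$, possibly at cut points of $\Upsilon_{\pm L}$.
\item The near-equality $b^+\approx b^-$ is a $C^0$ statement and cannot deliver this. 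A convex function within $\Psi$ of an affine one can have an arbitrarily steep one-sided derivative at an endpoint.
\end{itemize}
The paper obtains the first-order control in Step 2 of Proposition~\ref{Prop: almost pythagoras flow formula} (and in Proposition~\ref{Proposition: Almost linearity}). It extends the connecting geodesic a fixed amount past the endpoint inside the good region. It then uses convexity of both $\mathrm{md}^{-\delta}\circ F$ and $\mathrm{md}^{-\delta}\circ G$ there, and divides the merely $O(1)$ bound on $L^2-F^2/2-G^2/2$ by $(\mathrm{md}^{-\delta})'(F)\approx L$.

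The paper obtains the reverse inequality (Step 1 of the same proposition) by a squeeze. It flows forward along $F$, then backward along $G$, and stops at the first causal contact with $\partial J^+(x)$. The reverse triangle inequality through that contact point then forces all the one-sided expansion estimates to be equalities up to $\Psi$.

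``Apply comparison twice and use $b^+\approx b^-$'' supplies neither mechanism. ``Transfer by compactness of $D$'' cannot replace them either: compactness inside one spacetime gives nothing uniform over all spacetimes of all dimensions, so every estimate must be an explicit comparison estimate.

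\textbf{Further unproved steps from your choice of vertical curves.}
\begin{itemize}
\item You project along segments from $\Upsilon_{-L}$, but you lift and define $\mathsf d$ along segments to $\Upsilon_L$. These are two different families, non-unique at cut points, with no semigroup or order-preserving property.
\item The gradient flows give the paper uniqueness, stability and the implication $\alpha_0\le\beta_0\Rightarrow\alpha_t\le\beta_t$ (Proposition~\ref{Prop: first expansion rate estimate}). That is what makes its first-contact times well defined.
\item In your setting $\pi$ does not invert the lift. You would need $\mathsf d(\pi(\text{lift}_t z),z)\le\Psi$, the analogue of Remark~\ref{Remark: flows are inverse}, and this again rests on the missing two-sided Pythagoras identity.
\end{itemize}

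Your $\mathsf d$ is also not yet well posed.
\begin{itemize}
\item The model formula at fixed $T$ only sees pairs with $\mathsf d\le 2T$. Your slice inside $J(\Upsilon_{-2R},\Upsilon_{2R})$ has diameter up to $4R$, which needs $T\ge 2R$, not $T\le R$.
\item Its independence of $T$ is exactly the identity to be proved.
\item It is not symmetric, since it pairs the backward lift of $x$ with the forward lift of $y$.
\end{itemize}
Even the paper's manifestly symmetric first-contact definition \eqref{eq: def tilde d} needs a separate argument to show the two contact conditions are met at almost the same time. That argument uses an auxiliary vertex $\Upsilon_{-K}$ with $K\to\infty$ slowly (Proposition~\ref{P:almost symmetry}), and only then does \eqref{eq: pythagoras from slice} follow.

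A smaller point: $\Phi$ lands in the product diamond only up to $\Psi$. Times must be truncated, and points near $i^0$ need separate treatment, as at the end of the paper's proof.
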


\begin{Remark}[On the structure of the factors]
We shall eventually construct $X$ as the intersection of the diamond $J^{\mathcal M}(\Upsilon_{-R},\Upsilon_R)$ with the level set of $\ell(\Upsilon_{-L},\cdot)$ passing through $x_0 := \Upsilon_0$.  The monotonicity and semiconvexity \cite[Prop. 3.4]{McCann20} of 
$\ell(\Upsilon_{-L},\cdot)$ imply this level set is a locally Lipschitz (in fact DC \cite[Cor. 2.6]{KitagawaMcCann19}) 
spacelike hypersurface. Moreover, outside of the spacelike boundary $i^0$ of $J^{\mathcal M}(\Upsilon_{-R},\Upsilon_R)$ identified in Remark \ref{Remark: i^0} --- we expect the $DC$ implicit function theorem
\cite{VeselyZajicek89} to imply that
$X\setminus i^0$ is a locally $DC$ hypersurface-with-boundary on which the distance $\mathsf d$ we construct metrizes the discrete topology. Here $DC$ refers to manifolds in which the transition functions between charts are coordinatewise differences of convex functions; in particular,  such submanifolds are $C^2$-rectifiable.
\end{Remark}

The article is organized as follows. Subsection \ref{subsection: Notationandconventions} fixes our notation and conventions throughout the paper. 

In Section \ref{section: preliminary}, we discuss preliminary notions and results that will be used throughout the proof of our main result: In Subsection \ref{subsection: timelikesectional}, we recall the notion of timelike sectional curvature bounds on spacetimes and its associated characterizations via geometric comparison in the context of the Lorentzian Toponogov theorem --- a suitable convexity statement concerning the $-\delta$-modified Lorentzian distance $\md^{-\delta}$ in particular. In Subsection \ref{subsection: boundedLorentzianmetricspaces and GH-convergence}, we introduce the synthetic setting of metric spacetimes and the Lorentzian Gromov--Hausdorff distance insofar as we will need it, following Minguzzi--Suhr \cite{MinguzziSuhr22}. In Subsection \ref{subsection: causally convex functions grad flows}, we discuss causal and causally convex functions, in particular properties of the gradient flows of the latter {as recently obtained} by Braun--Gigli--McCann--Vincini--Zanardini \cite{BraunGigliMcCannVinciniZanardini}. These results are tied to our context via the observation that the modified Lorentzian distance functions to and from the endpoints of a long maximizing segment are quantifiably causally convex, cf.\ Theorem \ref{Theorem: convexityoftimeseparation}. 

The main part of our article is Section \ref{Section: Almost splitting}. We recall our main result in Subsection \ref{subsection: main result}, to whose proof the subsequent subsections are dedicated. In Subsection \ref{Subsection: expansion estimates, monotonicity, reparam flows}, we study expansion rate estimates for gradient flows of causally ($\lambda$-)convex functions and resulting monotonicity properties. Crucially, we observe that the causal convexity of the modified Lorentzian distance does not in general pass to causal convexity of the Lorentzian distance itself, thus we use reparametrization to define the gradient flow of the latter. Our timelike sectional curvature bound gives us a way to quantifiably control this reparametrization, obtaining expansion estimates and monotonicity formulae for the reparametrized flow as well. In Subsection \ref{Subsection: Busemann sum estimates, almost linearity, almost Pythagoras}, as a consequence of our work in the previous subsection, we give quantitative estimates on the sum of the two Lorentzian distances associated with a long timelike segment (equivalently, the sum of the approximate Busemann functions), and we also obtain almost-rigidity statements on the Lorentzian distances along their flow in the form of almost-linearity. Various formulae for almost product geometries follow, which ultimately allow us to metrize the intersection of a Busemann level set with a diamond having fixed vertices on long segment by using the flows. Having these results at hand, we are able to conclude the proof of our main theorem in Subsection \ref{Subsection: Proof of main result}, by constructing a correspondence of small distortion between a fixed causal diamond with vertices on our long maximizing segment and the corresponding diamond in a product metric spacetime. 

Finally, in Section \ref{section: discussion outlook}, we give an outlook on possible future related lines of research.

\subsection{Notation \& conventions}
\label{subsection: Notationandconventions}

Lorentzian metrics $\mathsf g$ are smooth and have signature $(+,-,\dots,-)$.
Vectors $v$ with $\mathsf g(v,v) > 0$ are called timelike and those with $\mathsf g(v,v) < 0$ are called spacelike. Vectors $v$ obeying $\mathsf g(v,v)=0$ are called null. A causal vector is by definition one that is either timelike or null. 
A Lorentzian manifold $(\mathcal M,\mathsf g)$ (i.e., $\mathcal M$ a smooth manifold and $\mathsf g$ a Lorentzian metric) is time-orientable if it admits a global smooth timelike vector field $V$, a time-orientable Lorentzian manifold is called \emph{spacetime}. Given a vector $v$ and a co-vector $\omega$, we write $v^\flat$ and $\omega^\sharp$ for the co-vector resp.\ vector corresponding to $v$ resp.\ $\omega$ via the musical isomorphisms induced by $\mathsf g$. We denote by $\leq$ and $\ll$ the causal and chronological orders, respectively. A spacetime is globally hyperbolic if it contains no closed causal curves and causal ``diamonds'' $J(x,y):=J^+(x)\cap J^-(y)$ are compact for all $x,y \in \mathcal M$. A causal vector $v \in T_p\mathcal M$ is future directed (sometimes simply future) if $\mathsf g_p(v,V(p)) \geq 0$, past directed (or simply past) if $\mathsf g_p(v,V(p)) \leq 0$. We sometimes write $\|v\|_{\mathsf g}:=\sqrt{\mathsf g(v,v)}$ if $v$ is future directed causal.

Spacetimes have dimension $n$ throughout this article. The Riemann curvature tensor $\Riem$ is defined by 
\begin{align}
    \Riem(U,V)W:=\nabla_U \nabla_V W - \nabla_V \nabla_U W - \nabla_{[U,V]} W
\end{align}
for vector fields $U,V$ and $W$ on $\cM$. 

We will employ the notation $\Psi= \Psi(\delta , L \, | \, R)$ for the error in our quantitative splitting. This function depends on the variables $\delta, L,R$ and has the important property that, if we evaluate the function at some given fixed $R=R'$, it converges to $0$ if we let $\delta \downarrow 0$ and $L \uparrow + \infty$. A similar convention is used in \cite{CheegerColding96}. Let us also note that the exact value of $\Psi$ may change from calculation to calculation, but its characteristic convergence property does not change. We will sometimes write $(\delta, L^{-1}) \to 0$ to mean $\delta \to 0$ and $L \to \infty$. Since we will usually fix the value of $R$ throughout our proof, we will often omit it in the notation, i.e., we will write $\Psi(\delta,L)$ instead of $\Psi(\delta,L \, | \, R)$. We emphasize that $\Psi$ is sometimes manifestly nonnegative and can sometimes take both signs, it is clear from the context whether one type of error or the other is meant.

On a spacetime $(\mathcal M,\mathsf g)$ a function $f: \mathcal M \rightarrow \R$ is called locally Lipschitz if it is locally Lipschitz with respect to any Riemannian distance on $M$. It is easily verified that any locally Lipschitz function in this sense is Lipschitz when restricted to a compact subset of $M$ with respect to any given Riemannian distance. 

Given a curve $\sigma:[a,b] \to M$, we interchangeably write the evaluation at $t$ either as $\sigma(t)$ or as $\sigma_t$. We use the term {\em Lorentzian arclength} of a causal curve (denoted $L_{\mathsf g}(\gamma)$) synonymously with {\em proper time}.  We prefer to use {\em time-separation} for the function $\ell:\cM^2 \longrightarrow \{-\infty\} \cup [0,\infty]$ of two variables that encodes the geometry, defined via
\begin{equation*}
    \ell(x,y):=\sup\{L_{\mathsf g}(\gamma): \gamma \text{ future causal from }x \text{ to } y\},
\end{equation*}
but 
{\em Lorentzian distance} for the {time-separation} $\ell(\cdot,z)$ (or $\ell(x,\cdot)$) to a fixed event $z$ in the future (or $x$ in the past); we occasionally use the two terms interchangeably, however.  Similarly, we prefer to reserve $\cM$ for a smooth spacetime, whereas $X$, $Y$ and $Z$ denote metric spaces or metric spacetimes that need not be smooth. We denote by $\ell_+:=\max(\ell,0)$ the nonnegative part of $\ell$.

Given a real-valued function $f$, we write
\begin{equation*}
    \frac{d}{dt}\bigg|^+_{t = t_0} f(t) :=\liminf_{h \to 0^+} \frac{f(t_0 + h) - f(t_0)}{h}, \quad \frac{d}{dt}\bigg|^-_{t = t_0} f(t):=\limsup_{h \to 0^+} \frac{f(t_0) - f(t_0-h)}{h}
\end{equation*}
for the right and left Dini derivatives of $f$ at $t = t_0$.
\section{Preliminary material}
\label{section: preliminary}

\subsection{Timelike sectional curvature bounds}
\label{subsection: timelikesectional}

Let us start our discussion by recalling the classical notion of timelike sectional curvature bounds on smooth spacetimes $(\mathcal M,\mathsf g)$.

If $\{v,w\}$ spans a non-degenerate plane $\Pi$ in a tangent space $T_p\mathcal M$, then the \textit{sectional curvature} of $\Pi$ is defined by 
\begin{equation}
    K(\Pi) := \frac{\mathsf g(\Riem(w,v)v,w)}{\mathsf g(v,v) \mathsf g(w,w) - \mathsf g(v,w)^2}.
\end{equation}
It is easily seen that this definition is independent of the choice of basis $\{v,w\}$ of $\Pi$.

\begin{Definition}[Timelike sectional curvature bounds] \label{def: timelike sectional curvature bound}
A spacetime $(\mathcal M,\mathsf g)$ is said to have a \textit{timelike sectional curvature bound} from below (resp.\ above) by a constant $k \in \R$ if for all $p \in \mathcal M$ and all timelike planes $\Pi \subseteq T_p\mathcal M$, we have
\begin{equation}
    K(\Pi) \geq k \quad (\text{resp.\ } \leq).
\end{equation}
\end{Definition}

\begin{Remark}[On timelike sectional curvature bounds]
\begin{enumerate} \item[] 
    \item If $\Pi$ is a timelike plane spanned by orthonormal vectors $v,w$ with $v$ unit timelike (i.e., $\mathsf g(v,v) = +1$) and $w$ unit spacelike (i.e., $\mathsf g(w,w) = -1$), then the sectional curvature of $\Pi$ reduces to
    \begin{align}
        K(\Pi) = -\mathsf g(\Riem(w,v)v,w).
    \end{align}
    \item If the timelike sectional curvature of $(\mathcal M,\mathsf g)$ is bounded from below (resp.\ above) by $k \in \R$, then its timelike Ricci curvature is bounded from below (resp.\ above) by $(n-1)k\mathsf g$: Indeed, suppose for simplicity that the bound is from below by $k$ and that $v \in T_p \mathcal M$ is unit timelike, and complete $\{ v \}$ to a basis of $T_p \mathcal M$ by appending unit spacelike vectors $e_2,\dots,e_n$ ($e_1:=v$). Noting that $-\mathsf g(\Riem(e_i,v)v,e_i) \geq k$ for every $i=2,\dots,n$, we get
    \begin{equation}
        \Ric(v,v) = \sum_{i=2}^n g(e_i,e_i) \mathsf g(\Riem(e_i,v)v,e_i) = -\sum_{i=2}^n \mathsf g(\Riem(e_i,v)v,e_i) \geq (n-1)k.
    \end{equation}
    \item Note that, due to our choice of signature $(+,-,\dots,-)$ of the Lorentzian metric $\mathsf g$, it is easy to see that our definition of sectional curvature is precisely the negative of the ones often considered in Lorentzian geometry, where the signature is $(-,+,\dots,+)$, e.g. \cite{BKOR24}. Hence, note that timelike sectional curvature bounds from below (resp.\ above) by $k \in \R$ in our sense correspond to bounds by $-k$ from above (resp.\ below) in the sense of \cite{BKOR24}. While in principle the role of the constant curvature spaces also switches for us (i.e., anti-de Sitter space has positive timelike sectional curvature and de-Sitter space negative), the comparison of various geometric configurations with constant curvature spaces remains the same as outlined in \cite{BKOR24} keeping in mind the passage of the bounds from our conventions to those of \cite{BKOR24}, as discussed above.

    \item Definition \ref{def: timelike sectional curvature bound} is the correct notion of sectional curvature bound in the Lorentzian setting. Indeed, if one were to require the same curvature bound for all $2$-planes in the tangent spaces, 
    as is done in the Riemannian setting, it is well known that the spacetime then would have constant sectional curvature if $n \geq 3$, see Kulkarni \cite{Kulkarni}. 
\end{enumerate}
\end{Remark}

Just like in the Riemannian case, in the spacetime setting it is known that timelike sectional curvature bounds imply bounds on various geometric configurations when compared to ones in the corresponding constant curvature ($2$-dimensional) spacetime. The first observation in this direction is due to Harris \cite{harris1982triangle}, whereas the equivalence of all of these notions in strongly causal smooth spacetimes is due to Beran--Kunzinger--Ohanyan--Rott \cite{BKOR24} (in conjunction with the equivalence of the geometric conditions in Lorentzian length spaces by Beran--Kunzinger--Rott \cite{BKR}).

We will discuss these results insofar as they pertain to our proof of the almost splitting. Let us denote by $\mathbb L^2(k)$ the simply connected $2$-dimensional spacetime of constant sectional curvature $k$. As we already mentioned, our choice of signature together with our definition of timelike sectional curvature bounds ensure that if $k > 0$ then $\mathbb L^2(k)$ is an anti-de Sitter space and if $k < 0$ then $\mathbb L^2(k)$ is a de-Sitter space. We write $\bar{\ell}_k$ for the time separation of $\mathbb L^2(k)$.

Let us now formulate the Lorentzian Toponogov theorem (see for example the classical result \cite[Thm.\ 3.4]{harris1982triangle} or the more recent version \cite[Thm.\ 3.2]{BKOR24}). Since we only require this theorem for the case of curvature bounded below by $-\delta \leq 0$, we will omit a discussion of triangle size bounds. Specifically, we recall the version \cite[Thm.\  3.4]{harris1982triangle} of Harris which, in contrast to modern treatments of the Lorentzian Toponogov theorem, one side is allowed to be merely locally maximizing and timelike. For future use, we recall that in $\mathbb L^2(k)$ with $k \leq 0$, any two timelike related points are connected by a unique (hence maximizing) timelike geodesic, cf.\ \cite[Prop.\ 11.13]{BeemBook}. 

\begin{Theorem}[Timelike triangle comparison] \label{thm: triangle comparison}
Let $(\mathcal M,\mathsf g)$ be a globally hyperbolic spacetime with timelike sectional curvature bounded below by $- \delta \leq 0$. Given any timelike triangle $\Delta(p_1,p_2,p_3)$ in $M$, i.e.\ any choice of timelike related points $p_1 \ll p_2 \ll p_3$ as well as a choice of connecting timelike maximizers from $p_1$ to $p_3$ and from $p_1$ to $p_2$, and a not necessarily maximizing timelike geodesic $\sigma:[0,1] \to \mathcal M$ from $p_2$ to $p_3$, let $\Delta(p_1^*,p_2^*,p_3^*)$ be the (up to isometries unique) comparison triangle in $\mathbb L^2(-\delta)$, i.e. $p_1^* \ll p_2^* \ll p_3^*$ and $\ell(p_1,p_2) = \bar{\ell}_{-\delta}(p_1^*,p_2^*)$, $\ell(p_1,p_3) = \bar{\ell}_{-\delta}(p_1^*,p_3^*)$ and $L_\mathsf g(\sigma) = \bar{\ell}_{-\delta}(p_2^*,p_3^*)$. Given any point $q$ on the side $p_2p_3$ (i.e., on $\sigma$) and the corresponding point $q^*$ on the side of $p_2^{\ast}p_3^{\ast}$ (i.e., if $q = \sigma_t$, the unique $t$-midpoint on the side $p_2^*p_3^*$), we have
\begin{equation} \label{eq: triangle comparison}
    \ell(p_1,q) \leq \bar{\ell}_{-\delta}(p_1^*,q^*).
\end{equation}
\end{Theorem}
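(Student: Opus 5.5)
This is the Lorentzian Toponogov comparison of Harris \cite[Thm.\ 3.4]{harris1982triangle}, and I would prove it by the classical Rauch--Jacobi-field route, transplanted to Lorentzian signature.

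\textbf{Step 1: reduction to an infinitesimal statement.} Consider the comparison function $t \mapsto \bar\ell_{-\delta}(p_1^*,\sigma^*_t)$ on the model side $p_2^*p_3^*$, and the actual function $f(t):=\ell(p_1,\sigma_t)$. Both agree at $t=0$ and $t=1$ by construction. The goal is to show that $f$ is ``more concave'' than its model counterpart, in the barrier (support-function) sense. A maximum-principle argument along $[0,1]$ then yields $f \le \bar f$.

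To make this rigorous, I would argue by contradiction using an interpolating family of comparison functions. Alternatively, I would prove the inequality first for small triangles and then globalize by a subdivision and gluing (Alexandrov-lemma) argument along $\sigma$. Global hyperbolicity guarantees existence of maximizers from $p_1$ to each $\sigma_t$, since $p_1 \ll \sigma_t$ by openness of $I^+$ and compactness of diamonds.

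\textbf{Step 2: the Hessian comparison.} At a point $\sigma_t$, take a maximizer $\gamma$ from $p_1$. Even where $\ell(p_1,\cdot)$ is not smooth (cut points), one can build a smooth lower support function $\phi \le \ell(p_1,\cdot)$ touching at $\sigma_t$. For this, vary $\gamma$ near its endpoint, or shift the base point slightly along $\gamma$, in the usual way.

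The second variation of $\phi$ along the timelike geodesic $\sigma$ is computed through the index form of $\gamma$ with a variation field. This is where the curvature enters. Because $\sigma$ is timelike and $\gamma$ is timelike, the relevant Jacobi-field directions lie in timelike planes, so only the timelike sectional curvature bound $K \ge -\delta$ matters.

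With our sign convention, $K \ge -\delta$ means $\Riem$ contributes at most as in $\mathbb L^2(-\delta)$ to the index form. A Lorentzian Rauch comparison, applied to transplanted vector fields with the reversed-sign index form on the orthogonal spacelike complement of $\dot\gamma$, gives
\[
\frac{d^2}{dt^2}\phi(\sigma_t) \le F\bigl(\phi(\sigma_t), \text{angle data}\bigr),
\]
where $F$ is the exact second derivative of the model distance in $\mathbb L^2(-\delta)$ along a geodesic with the same data.

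\textbf{Step 3: the model ODE and the comparison argument.} In $\mathbb L^2(-\delta)$, the model function solves the ODE $\bar f'' = F(\bar f,\dots)$, and this can be made explicit through $\cosh$ and $\sinh$ modifications of the distance. I expect it is cleanest to phrase this via the modified distance $\md^{-\delta}$, for which the ODE becomes linear: $\md'' \le \delta\cdot(\text{norm})^2\,\md$-type inequalities hold in the barrier sense.

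A linear ODE comparison with equal boundary values at $t=0,1$ then forces $f\le \bar f$. This uses that the model Jacobi problem has no conjugate points for $k=-\delta\le0$, which is also why no size restriction on triangles is needed.

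\textbf{Main obstacle.} The main obstacle is Step 2 in Lorentzian signature. The index form is not definite, and one must check carefully that the variation fields are spacelike orthogonal to $\dot\gamma$, so the Lorentzian maximality of $\gamma$ (a reversed Morse index) gives the correct inequality direction.

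A second obstacle is the non-smoothness of $\ell(p_1,\cdot)$ at cut points. I would handle this with support functions obtained by replacing $p_1$ by $\gamma_\epsilon$, which is not conjugate along $\gamma$ to $\sigma_t$ because maximizers have no interior conjugate points. The small error introduced by this replacement vanishes as $\epsilon\to0$.
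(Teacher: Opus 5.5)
The paper does not prove this statement; it imports it from Harris \cite[Thm.\ 3.4]{harris1982triangle}. Your framework is a viable route: lower barriers from the reverse triangle inequality, an index-form Hessian comparison, then a maximum principle on $[0,1]$. However, the inequality at its core points the wrong way throughout, so as written the argument cannot give $\ell(p_1,q)\le\bar\ell_{-\delta}(p_1^*,q^*)$.

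You aim to show that $f(t)=\ell(p_1,\sigma_t)$ is \emph{more concave} than the model. Your tools are $\frac{d^2}{dt^2}\phi(\sigma_t)\le F$ for a \emph{lower} support function $\phi\le\ell(p_1,\cdot)$, and bounds of the form ``$\md''\le\dots$''. This fails on three counts.
\begin{enumerate}
\item An upper bound on the second derivative of a lower barrier carries no information about $f$.
\item Even a genuine bound $f''\le F$ with equal endpoint values gives the opposite conclusion. Put $\Lambda:=L_{\mathsf g}(\sigma)$ and $h:=\md^{-\delta}(f)-\md^{-\delta}(\bar f)$. Then $h''\le\delta\Lambda^2h$ and $h(0)=h(1)=0$ force $h\ge0$, i.e.\ $f\ge\bar f$.
\item Your Step~2 inequality is false. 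Take Minkowski space ($K=0\ge-\delta$) with $\delta>0$. Then $u(t)=\ell(p_1,\sigma_t)$ satisfies $u''=(\Lambda^2-u'^2)/u$, whereas the model value with the same data is $\sqrt\delta\coth(\sqrt\delta u)(\Lambda^2-u'^2)$. Since $u'\ge\Lambda$ (reverse Cauchy--Schwarz) and $\sqrt\delta\coth(\sqrt\delta u)\ge 1/u$, the actual second derivative \emph{exceeds} the model one. The inequality is strict whenever $\sigma$ is not radial from $p_1$.
\end{enumerate}

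What is true, and what closes the argument, is the reverse inequality. Take the barrier $\phi_\epsilon:=\epsilon+\ell(\gamma_\epsilon,\cdot)\le\ell(p_1,\cdot)$, which touches at $\sigma_t$.
\begin{enumerate}
\item In a direction $X\perp\dot\gamma$, its Hessian equals $I(J,J)$. Here $J$ is the Jacobi field with $J(\epsilon)=0$ that equals $X$ at $\sigma_t$, and for fields orthogonal to $\dot\gamma$ one has $I(V,V)=\int\bigl(-|V'|^2+K(\dot\gamma,V)\,|V|^2\bigr)$ with $|V|^2:=-\mathsf g(V,V)$.
\item Absence of conjugate points makes $I$ negative definite on fields vanishing at both ends. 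Hence $J$ \emph{maximizes} $I$ among fields with its boundary values.
\item Transplant the model Jacobi field into this comparison and use $K\ge-\delta$. This gives $\mathrm{Hess}\,\phi_\epsilon(X,X)\ge-\sqrt\delta\coth\bigl(\sqrt\delta\,(f(t)-\epsilon)\bigr)|X|^2$, a \emph{lower} bound by the model Hessian.
\item Consequently $\frac{d^2}{dt^2}\md^{-\delta}(f)\ge\Lambda^2\bigl(1+\delta\,\md^{-\delta}(f)\bigr)$ in the barrier sense, up to $o(1)$ as $\epsilon\to0$. This is exactly the convexity version the paper records.
\item Then $h''\ge\delta\Lambda^2h$ with $h(0)=h(1)=0$ yields $h\le0$, i.e.\ $f\le\bar f$.
\end{enumerate}
This is the easy direction: only the Lorentzian maximal index lemma is needed, not a full Rauch comparison.
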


The following (equivalent) characterization of timelike sectional lower bounds via convexity properties of the modified time separation function will prove useful, cf.\ \cite[Thm.\ 4.37]{BKR}. We note that the proof in that reference also yields the convexity along not necessarily maximizing geodesics upon use of Harris's version of the timelike triangle comparison that we have cited above.

\begin{Theorem}[Lorentzian Toponogov theorem -- convexity version]
\label{Theorem: Toponogov convexity version}
Let $(\mathcal M,\mathsf g)$ be a globally hyperbolic spacetime with timelike sectional curvature bounded below by $-\delta \leq 0$. Then for any $p \in M$ and any (not necessarily maximizing) timelike geodesic $\gamma:[0,1] \to \mathcal M$ contained in $I^+(p)$, the function $f_\delta:[0,1] \to \R$ defined by
\begin{equation}
    f_\delta(t):=\begin{cases}
        \frac{\ell(p,\gamma_t)^2}{2}, \quad \delta = 0,\\
        \frac{\cosh(\sqrt{\delta} \, \ell(p,\gamma_t)) - 1}{\delta}, \quad {\delta > 0},
    \end{cases}
\end{equation}
satisfies the following differential inequality in the distributional sense:
\begin{equation}
    f_\delta''(t) \geq L_\mathsf g(\gamma)^2 (1 {+} \delta f_\delta(t)). 
\end{equation}
\end{Theorem}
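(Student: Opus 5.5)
We deduce the distributional inequality from the triangle comparison of Theorem \ref{thm: triangle comparison}, comparing $f_\delta$ with the corresponding function in the model space $\mathbb L^2(-\delta)$. In the model the inequality holds with equality, and triangle comparison transfers it to $\mathcal M$ as a ``lies below the model curve'' statement. Locally, such a statement is equivalent to a lower bound on the second derivative.

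\textbf{Step 1: the model computation.} Let $\bar\gamma$ be a timelike geodesic in $\mathbb L^2(-\delta)$ of length $L=L_\mathsf g(\gamma)$, parametrized on $[0,1]$ proportionally to arclength, so that $\mathsf g(\dot{\bar\gamma},\dot{\bar\gamma})=L^2$. Let $\bar p\ll\bar\gamma$. We show by direct computation that $\bar f(t)$, defined from $\bar\ell_{-\delta}(\bar p,\bar\gamma_t)$ by the same formula as $f_\delta$, satisfies
\[
\bar f''=L^2(1+\delta\bar f).
\]
Equivalently, $u=\cosh(\sqrt\delta\,\bar\ell)$ satisfies $u''=\delta L^2 u$. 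The cleanest route is to embed de Sitter space as the hyperboloid $\{x: \langle x,x\rangle=-1/\delta\}$ in Minkowski space with signature $(+,-,-)$. For timelike related points one has $\langle x,y\rangle=-\cosh(\sqrt\delta\,\bar\ell(x,y))/\delta$. A geodesic is $\bar\gamma_t=a\cosh(\sqrt\delta Lt)+b\sinh(\sqrt\delta Lt)$, and $\langle \bar p,\cdot\rangle$ is linear. Hence $u(t)=-\delta\langle\bar p,\bar\gamma_t\rangle$ solves $u''=\delta L^2u$. The case $\delta=0$ is the analogous Minkowski computation: $\ell^2/2=\tfrac12\langle \bar\gamma_t-\bar p,\bar\gamma_t-\bar p\rangle$ is a quadratic in $t$ with second derivative $L^2$.

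\textbf{Step 2: local comparison.} Fix a subinterval $[t_0,t_1]\subseteq[0,1]$. Set $p_1=p$, $p_2=\gamma_{t_0}$, $p_3=\gamma_{t_1}$, with $\sigma=\gamma|_{[t_0,t_1]}$, which is a not necessarily maximizing geodesic. Since $\gamma\subseteq I^+(p)$ and $\mathcal M$ is globally hyperbolic, maximizers from $p$ to $p_2$ and to $p_3$ exist. Apply Theorem \ref{thm: triangle comparison}. The comparison triangle matches $\ell(p,\gamma_{t_0})$, $\ell(p,\gamma_{t_1})$ and $L(\sigma)$, and \eqref{eq: triangle comparison} gives $\ell(p,\gamma_t)\le\bar\ell_{-\delta}(p_1^*,q^*_t)$. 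Because $\cosh$ and the square are increasing on $[0,\infty)$, this yields $f_\delta(t)\le\bar f(t)$ on $[t_0,t_1]$, with equality at the endpoints. Here $\bar f$ solves the linear ODE $\bar f''=L^2(1+\delta\bar f)$ with boundary data $f_\delta(t_0)$ and $f_\delta(t_1)$.

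One point needs care: the ordering $p_2\ll p_3$ requires $t_0<t_1$, and the comparison triangle must exist. For $\delta>0$, de Sitter space may not realize every triangle satisfying the reverse triangle inequality, so we restrict to short subintervals if needed. The inequality is local, so this restriction costs nothing.

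\textbf{Step 3: from the comparison to the distributional inequality.} We now have: $f_\delta$ is continuous (since $\ell$ is continuous under global hyperbolicity), and on every small interval it lies below the solution of $h''=L^2(1+\delta h)$ with the same boundary values. We claim this is equivalent to $f''\ge L^2(1+\delta f)$ in the distributional sense. This is a standard ``sub-solution'' argument, and it is the main obstacle, because the right-hand side depends on $f$ itself.

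First, we get a local lower bound on the second difference quotients. Writing $\bar f=f_\delta+\text{error}$ and expanding $\bar f$ at the midpoint with the ODE, we obtain
\[
f(t+h)+f(t-h)-2f(t)\ge L^2(1+\delta f(t))h^2+o(h^2),
\]
with the $o(h^2)$ uniform on compacts, using continuity of $f$ and the boundedness of the ODE solutions on short intervals. Second, we test against a nonnegative $\varphi\in C_c^\infty$: we integrate the second difference quotient $h^{-2}\big(f(t+h)+f(t-h)-2f(t)\big)$ against $\varphi$, move the differences onto $\varphi$, and let $h\to0$. This gives $\int f\varphi''\ge\int L^2(1+\delta f)\varphi$, which is the claimed inequality.

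For $\delta=0$ the argument simplifies: $f-L^2t^2/2$ lies below chords, i.e.\ it is convex, and the claim is immediate.
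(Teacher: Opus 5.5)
Your proposal is correct and follows essentially the paper's route. The paper gives no self-contained proof: it cites \cite[Thm.\ 4.37]{BKR}, which obtains the convexity statement from triangle comparison against the model function solving $\bar f''=L^2(1+\delta\bar f)$, exactly as you do, and notes that Harris's comparison (Theorem \ref{thm: triangle comparison}) lets the side along $\gamma$ be non-maximizing, which is the input you use in Step 2.

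Your caveat about realizability of comparison triangles in de Sitter space is unnecessary. For curvature bounded below by $-\delta\le 0$, every timelike triangle satisfying the reverse triangle inequality has a comparison triangle in $\mathbb L^2(-\delta)$, as Theorem \ref{thm: triangle comparison} asserts. Note also that your proposed remedy would not have worked had there been an issue: shrinking $[t_0,t_1]$ does not shorten the two sides issuing from $p$.
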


We now turn to a discussion of the Lorentzian (hyperbolic) law of cosines in simply connected $2$-dimensional spacetimes of constant curvature $\mathbb L^2(-\delta)$ (with $\delta \geq 0$) which will be very useful throughout, cf.\ Beran--Sämann \cite[Lem.\ 2.8]{Beran_2023} (notice that this reference adopts the signature convention $(-, +, \dots, +)$ that flips the sign of the sectional curvature on timelike planes). We will use all of the variants of this result listed below repeatedly in our proof of the almost-splitting.

\begin{Proposition} \label{Thm : Law of cosines} \emph{(Hyperbolic law
of cosines \cite[Lem.\ 2.8]{Beran_2023})} 
\begin{enumerate}[label = (\Alph*)]
    \item Let $\delta \geq 0$ and $\mathbb L^2(-\delta)$ be the simply connected $2$-dimensional spacetime with timelike sectional curvature equal to $-\delta$ and time separation function given by $\ell = \overline{\ell}_{-\delta}$. Consider a timelike triangle $\Delta (p_1,p_2,p_3)$ in $\mathbb L^2(-\delta)$ with sidelengths
    \[
    \ell (p_1,p_2) =a, \qquad \ell(p_2, p_3) = b, \qquad \ell(p_1,p_3)=c+d,
    \]
    and suppose that $p_1p_3$ is the longest side, namely $a+b \leq c+d$. Choose now a point $q$ on the maximizing timelike geodesic $\sigma$ connecting $p_1$ to $p_3$ such that
    \[
    \ell (p_1, q)= c, \qquad \ell (q, p_3) = d. 
    \]
    We suppose $q \leq p_2$ or $p_2 \leq q$ and define $x:= \max \{ \ell (q, p_2), \ell(p_2,q) \}$. Then the following formulas hold:
    \begin{itemize}
        \item[i)] If $\delta =0$, then 
        \begin{equation*}
        x^2 = \frac{c b^2+a^2 d}{c+d} - cd.
        \end{equation*}
        \item[ii)] If $\delta >0$, then 
        \begin{equation*}
        \begin{split}
    \cosh( \sqrt{\delta}x ) =&\, \left ( \cosh( \sqrt{\delta}b ) - \cosh( \sqrt{\delta}a) \cosh( \sqrt{\delta}(c+d))  \right ) \cdot  
    \dfrac{\sinh (\sqrt{\delta}c )}{\sinh (\sqrt{\delta}(c+d)) }\\
    &\, + \cosh ( \sqrt{\delta} a) \cosh ( \sqrt{\delta} c).
    \end{split}
    \end{equation*}
    In particular, the following asymptotic formula holds:
    \begin{equation*}
    x^2 = \dfrac{cb^2+a^2d}{c+d} - cd + O(\delta (c+d)^4) \text{ for } \delta \downarrow 0 \text{ or } (c+d) \downarrow 0.
    \end{equation*}
    \end{itemize}

    \item Similarly to $(A)$, consider a timelike triangle $\Delta(p_1,p_2,p_3)$ in $\mathbb L^2(-\delta)$ with sidelengths $a = \ell(p_1,p_2)$, $b+c = \ell(p_2,p_3)$ and $d=\ell(p_1,p_3)$, such that $a+b+c \leq d$. Let $q$ be a point on the side from $p_2$ to $p_3$ such that $\ell(p_2,q) = b$ and $\ell(q,p_3) = c$. Then for $x:=\ell(p_1,q)$ the following formulas hold:
    \begin{itemize}
        \item[i)] If $\delta = 0$, then
        \begin{equation*}
            x^2 = \frac{bd^2 + a^2 c}{b+c}-bc.
        \end{equation*}
        \item[ii)] If $\delta > 0$, then
        \begin{align*}
            \cosh(\sqrt{\delta} x) = &(\cosh(\sqrt{\delta d}) - \cosh(\sqrt{\delta}a)\cosh(\sqrt{\delta}(b+c))) \cdot \frac{\sinh(\sqrt{\delta} b)}{\sinh(\sqrt{\delta}(b+c))}\\ 
            &+ \cosh(\sqrt{\delta}a) \cosh(\sqrt{\delta}b).
        \end{align*}
        In particular, the following asymptotic formula holds:
        \begin{equation*}
            x^2 = \frac{bd^2 + a^2 c}{b+c}-bc + O(\delta d^4) \text{ for } \delta \downarrow 0 \text{ or } d \downarrow 0.
        \end{equation*}
    \end{itemize}
    \item In analogy with the previous cases, consider a timelike triangle $\Delta(p_1,p_2,p_3)$ in $\mathbb L^2(-\delta)$ with sidelengths $\ell(p_1,p_2) = a+b$, $\ell(p_2,p_3) = c$, $\ell(p_1,p_3) = d$. Let $q$ be the point on the side from $p_1$ to $p_2$ such that $\ell(p_1,q) = a$, $\ell(q,p_2) = b$. Denote by $x:=\ell(q,p_3)$. Then
    \begin{itemize}
    \item[i)] If $\delta = 0$, then
        \begin{equation*}
            x^2 = \frac{bd^2 + c^2 a}{b+a}-ba.
        \end{equation*}
        \item[ii)] If $\delta > 0$, then
        \begin{align*}
            \cosh(\sqrt{\delta} x) = &(\cosh(\sqrt{\delta d}) - \cosh(\sqrt{\delta}c)\cosh(\sqrt{\delta}(b+a))) \cdot \frac{\sinh(\sqrt{\delta} b)}{\sinh(\sqrt{\delta}(b+a))}\\ 
            &+ \cosh(\sqrt{\delta}c) \cosh(\sqrt{\delta}b).
        \end{align*}
        In particular, the following asymptotic formula holds:
        \begin{equation*}
            x^2 = \frac{bd^2 + c^2 a}{b+a}-ba + O(\delta d^4) \text{ for } \delta \downarrow 0 \text{ or } d \downarrow 0.
        \end{equation*}
    \end{itemize}
    \end{enumerate}
\end{Proposition}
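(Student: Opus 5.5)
We will prove the three formulas by working in an explicit model of $\mathbb L^2(-\delta)$ and exploiting linearity along the geodesic $\sigma$ (respectively along the side containing $q$). For $\delta=0$, $\mathbb L^2(0)$ is two-dimensional Minkowski space. For $\delta>0$, we realize $\mathbb L^2(-\delta)$ (de Sitter space in our sign convention) as the quadric $\{X\in\R^{1,2}:\langle X,X\rangle=-1/\delta\}$ in $\R^3$ with a flat metric of signature $(+,-,-)$. In this model, timelike related points satisfy $\cosh(\sqrt\delta\,\ell(X,Y))=-\delta\langle X,Y\rangle$.

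\textbf{Case (A), $\delta=0$.} We place $p_1=0$ and $p_3=(c+d)e$ with $e$ unit timelike, so that $q=ce$. Writing $p_2=v$, we have $a^2=\langle v,v\rangle$, $b^2=\langle (c+d)e-v,(c+d)e-v\rangle$ and $x^2=\langle v-ce,v-ce\rangle$. The map $t\mapsto \langle v-te,v-te\rangle$ is a quadratic polynomial with leading coefficient $1$, taking the values $a^2$ at $t=0$ and $b^2$ at $t=c+d$. Lagrange interpolation at $t=c$, with correction term $t(t-(c+d))$ evaluated at $t=c$, equal to $-cd$, yields $x^2=\frac{cb^2+a^2d}{c+d}-cd$. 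Cases (B) and (C) follow in the same way, interpolating instead along the side $p_2p_3$, respectively $p_1p_2$.

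\textbf{Case (A), $\delta>0$.} We parametrize $\sigma$ as $\sigma_t=\cosh(\sqrt\delta t)P+\sinh(\sqrt\delta t)T/\sqrt\delta$ with $P=p_1$ and $T$ unit timelike. Then $h(t):=-\delta\langle p_2,\sigma_t\rangle$ solves $h''=\delta h$. Hence $h$ is determined by its values $h(0)=\cosh(\sqrt\delta a)$ and $h(c+d)=\cosh(\sqrt\delta b)$, via the hyperbolic interpolation
\[
h(c)=h(0)\frac{\sinh(\sqrt\delta d)}{\sinh(\sqrt\delta(c+d))}+h(c+d)\frac{\sinh(\sqrt\delta c)}{\sinh(\sqrt\delta(c+d))}.
\]
The identity $\sinh(\sqrt\delta d)=\sinh(\sqrt\delta(c+d))\cosh(\sqrt\delta c)-\cosh(\sqrt\delta(c+d))\sinh(\sqrt\delta c)$ rearranges this into the stated form, since $h(c)=\cosh(\sqrt\delta x)$. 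The analogous interpolation along the appropriate side gives (B) and (C). (The stated formulas contain the typo $\sqrt{\delta d}$, which should read $\sqrt\delta\, d$.)

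\textbf{Asymptotics.} We expand $\cosh(\sqrt\delta y)=1+\delta y^2/2+\delta^2y^4/24+\dots$ and $\sinh(\sqrt\delta c)/\sinh(\sqrt\delta(c+d))=\frac{c}{c+d}\bigl(1+O(\delta(c+d)^2)\bigr)$. Subtracting $1$ and dividing by $\delta/2$, the order-$\delta$ terms reproduce the flat formula, and all remaining terms are $O(\delta(c+d)^4)$. Here we use $a,b,x\le c+d$, which follows from the reverse triangle inequality.

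\textbf{Main obstacle.} The identity $\cosh(\sqrt\delta\,\ell)=-\delta\langle\cdot,\cdot\rangle$ must apply to $p_2$ and $q$. This requires care with the causal hypotheses: we need that $q$ and $p_2$ are causally related, and we need that the ambient inner product has the correct sign so that $\ell=\max\{\ell(q,p_2),\ell(p_2,q)\}$ is captured by the formula. We handle this by noting that in de Sitter space, timelike related points in the same chart lie on a common timelike geodesic, by \cite[Prop.\ 11.13]{BeemBook}, and that the reverse-triangle hypotheses guarantee the configuration is realized within that region.
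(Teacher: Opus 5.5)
The paper gives no proof of this proposition; it is imported from Beran--Sämann \cite[Lem.\ 2.8]{Beran_2023}. Your ambient-quadric argument is a correct, self-contained derivation. It is essentially the standard derivation, recast so that no angles appear.

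\textbf{Comparison with the law-of-cosines route.} The displayed form of (A)(ii) is what the Lorentzian law of cosines with hyperbolic angles produces. Let $\omega$ be the hyperbolic angle at $p_1$ between $\sigma$ and the side $p_1p_2$. Then $\cosh(\sqrt\delta b)=\cosh(\sqrt\delta a)\cosh(\sqrt\delta(c+d))-\sinh(\sqrt\delta a)\sinh(\sqrt\delta(c+d))\cosh\omega$, and the same identity holds with $(b,c+d)$ replaced by $(x,c)$. Eliminating $\sinh(\sqrt\delta a)\cosh\omega$ gives the formula, a Lorentzian Stewart theorem. Your $h(t)$ is exactly $\cosh(\sqrt\delta a)\cosh(\sqrt\delta t)-\sinh(\sqrt\delta a)\sinh(\sqrt\delta t)\cosh\omega$, so two-point interpolation for $h''=\delta h$ is that elimination.

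Your formulation buys two things:
\begin{itemize}
\item No angles or angle-sign conventions enter. These do matter in (C), where the shared angle sits at the middle vertex $p_2$ and the sign in the law of cosines flips.
\item Both orderings of $q,p_2$ in (A) are handled at once, by symmetry of $\langle\cdot,\cdot\rangle$.
\end{itemize}

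Your ``main obstacle'' in fact dissolves. The model is globally hyperbolic, so causally related points are joined by a causal geodesic. Its explicit form is $\cosh(\sqrt\delta t)X+\sinh(\sqrt\delta t)T/\sqrt\delta$, or $X+sN$ with $N$ null in the null case, where both sides equal $1$. This gives $\cosh(\sqrt\delta\,\ell(X,Y))=-\delta\langle X,Y\rangle$ for every causally related pair, so the stated hypothesis on $q,p_2$ is all you need.

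The quadric is $\R\times S^1$, not simply connected. This is harmless: null cones of $dS_2$ never close up, so causal diamonds of the universal cover project injectively and preserve $\ell$. Your correction of $\sqrt{\delta d}$ to $\sqrt\delta\, d$ is right.

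\textbf{The step that needs repair: the asymptotics.} Write $D:=c+d$. The expansion $\sinh(\sqrt\delta c)/\sinh(\sqrt\delta D)=\frac cD(1+O(\delta D^2))$ is too coarse. After subtracting $1$ and dividing by $\delta/2$, its error contributes $O(D^2)$, the same order as $x^2$, not $O(\delta D^4)$. That hidden term is precisely where $-cd$ comes from.

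Use instead $\frac{\sinh(\sqrt\delta c)}{\sinh(\sqrt\delta D)}=\frac cD\bigl(1-\frac{\delta(D^2-c^2)}{6}\bigr)+O(\delta^2D^4)$, and likewise for $d$. Since $c(D^2-c^2)+d(D^2-d^2)=3cdD$, the constant parts $1$ of $\cosh(\sqrt\delta a)$ and $\cosh(\sqrt\delta b)$, multiplied by these corrections, contribute exactly $-\frac\delta2 cd$. All remaining terms are $O(\delta^2D^4)$, uniformly while $\sqrt\delta D$ stays bounded, which covers both limits in the statement.

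An equivalent route: $u:=\frac2\delta(h-1)$ solves $u''=2+\delta u$ with $|u|=O(D^2)$ on $[0,D]$. So $u$ minus your flat quadratic has second derivative $O(\delta D^2)$ and boundary values $O(\delta D^4)$, hence is $O(\delta D^4)$ on $[0,D]$.

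Cases (B) and (C) go the same way, with interpolation interval of length $b+c\le d$, respectively $a+b\le d$.
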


\subsection{Metric spacetimes and Lorentzian Gromov--Hausdorff distance}
\label{subsection: boundedLorentzianmetricspaces and GH-convergence}

In order to compare subsets of two given spaces (as will be necessary in the almost-splitting), we will employ the Lorentzian Gromov--Hausdorff distance, as introduced by Minguzzi--Suhr in \cite{MinguzziSuhr22}. We note that that work studies bounded Lorentzian metric spaces, which are too restrictive for our purposes because of the compactness built into their definition. Because of this, we will now introduce all of the synthetic Lorentzian geometry we will need during the course of our proof of the almost-splitting.

\begin{Definition}[Metric spacetimes]
A \emph{metric spacetime} is a pair $(X,\ell)$, where $X$ is a set and $\ell:X^2 \to \{-\infty\} \cup [0,\infty]$ is a function (called \emph{time separation}) satisfying
\begin{equation}
    \ell(x,z) \geq \ell(x,y) + \ell(y,z)
\end{equation}
for all $x,y,z \in X$. Here, the right-hand side is understood to be $-\infty$ if one of the summands is $-\infty$. We say $(X,\ell)$ is \emph{bounded} if $\ell$ is bounded above.
\end{Definition}

The timelike and causal relations $\ll$ and $\leq$ on a metric spacetime $(X,\ell)$ are defined by $\ll:=\{\ell > 0\}$ and $\leq:=\{\ell \geq 0\}$, respectively. We use the familiar notation $I^\pm$, $J^\pm$ and $I(\cdot,\cdot)$, $J(\cdot,\cdot)$ for timelike/casual futures/pasts and timelike/causal diamonds from usual Lorentzian geometry.

\begin{Example}[Examples of metric spacetimes]
\begin{enumerate}
    \item[] 
    \item Any smooth  
    spacetime $(\mathcal M,\mathsf g)$ naturally has the structure of a metric spacetime $(\mathcal M,\ell)$, where $\ell$ is the usual time separation function. 
    \item Given a metric spacetime $(X,\ell)$ and $A \subseteq X$, then $(A,\ell|_{A^2})$ is also a metric spacetime.
    \item Compact subsets of globally hyperbolic spacetimes are bounded metric spacetimes.
\end{enumerate}
\end{Example}

Next, we discuss the Lorentzian Gromov--Hausdorff (semi-)distance, cf.\ \cite[Def.\ 4.6]{MinguzziSuhr22}, in the context of bounded metric spacetimes. To this end, recall that a \emph{correspondence} between sets $X$ and $Y$ is a left- and right-total relation $ \cR  \subseteq X \times Y$, i.e.\ for each $x \in X$ there is $y \in Y$ such that $(x,y) \in \cR$, and similarly for each $y \in Y$ there is an $x \in X$ such that $(x,y) \in \cR$. Clearly, $X \times Y$ is itself a correspondence.

\begin{Definition}[Lorentzian Gromov--Hausdorff distance]
\label{D:d_LGH}
Let $(X,\ell^X)$ and $(Y,\ell^Y)$ be bounded metric spacetimes.
\begin{enumerate}
\item Given a correspondence $\cR \subseteq X \times Y$, its \emph{distortion} is
\begin{equation}
    \mathrm{dis}(\cR):= \sup \{ |\ell^X_+(x,x') - \ell^Y_+(y,y')| : (x,y),(x',y') \in \cR\}.
\end{equation}
\item The \emph{Lorentzian Gromov--Hausdorff distance} $\mathsf d_{LGH}(X,Y)$ is defined as
\begin{equation}
\label{d_{LGH}}
    \mathsf d_{LGH}(X,Y) := \inf_{\cR} \mathrm{dis}(\cR), 
\end{equation}
where the infimum runs over all correspondences $\cR \subseteq X \times Y$.
\end{enumerate}
\end{Definition}

Next, we introduce product metric spacetimes, with respect to which we will formulate closeness in the context of almost-splitting.

\begin{Definition}[Lorentzian products]
\label{Definition: Lorentzian products}
Given a metric space $(Z, \mathsf d)$, consider the product $\R \times Z$. Define the time separation function $\ell_{\R \times Z}:(\R \times Z)^2 \to \{-\infty\} \cup [0,\infty)$ on $\R \times Z$ via the formula
\begin{equation}\label{Lorentzian products}
    \ell_{\R \times Z}((s,x),(t,y)):=\begin{cases}
        \sqrt{(t-s)^2 - \mathsf d(x,y)^2}, & t-s \geq \mathsf d(x,y),\\ -\infty & \text{otherwise}.
    \end{cases}
\end{equation}
We call $(\R \times Z, \ell_{\R \times Z})$ the Lorentzian product over $(Z,\mathsf d)$.
\end{Definition}

The following is elementary to prove.

\begin{Lemma}[On Lorentzian products]
\label{Lemma: On Lorentzian products}
Given any metric space $(Z,\mathsf d)$, the Lorentzian product $(\R \times Z, \ell_{\R \times Z})$ is a metric spacetime. Moreover, on any causal diamond of the form $J((-R,x_0),(R,x_0)) \subseteq \R \times Z$ for any $x_0 \in Z$ and $R \geq 0$, the time separation is bounded above by $2R$. In particular, any such causal diamond is a bounded metric spacetime when endowed with the restriction of the 
time separation.
\end{Lemma}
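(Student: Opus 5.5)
The plan is to verify the reverse triangle inequality directly from the formula, and then bound $\ell$ on the diamond by a one-line estimate.

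\emph{Reverse triangle inequality.} Take $(r,x),(s,y),(t,z)\in\R\times Z$. If either $\ell_{\R\times Z}((r,x),(s,y))$ or $\ell_{\R\times Z}((s,y),(t,z))$ equals $-\infty$, the right-hand side is $-\infty$ and there is nothing to prove. Otherwise $s-r\ge \mathsf d(x,y)$ and $t-s\ge\mathsf d(y,z)$. By the triangle inequality for $\mathsf d$ we get $t-r\ge \mathsf d(x,y)+\mathsf d(y,z)\ge \mathsf d(x,z)$, so the left-hand side is finite and nonnegative. Set
\[
a_1=s-r,\quad b_1=\mathsf d(x,y),\quad a_2=t-s,\quad b_2=\mathsf d(y,z),
\]
so that $a_i\ge b_i\ge0$. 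We must show
\[
\sqrt{(a_1+a_2)^2-\mathsf d(x,z)^2}\ \ge\ \sqrt{a_1^2-b_1^2}+\sqrt{a_2^2-b_2^2}.
\]
Since $\mathsf d(x,z)\le b_1+b_2$, it suffices to prove
\[
\sqrt{(a_1+a_2)^2-(b_1+b_2)^2}\ \ge\ \sqrt{a_1^2-b_1^2}+\sqrt{a_2^2-b_2^2}.
\]
This is the reverse Cauchy--Schwarz (reverse triangle) inequality for future causal vectors $(a_i,b_i)$ in $2$-dimensional Minkowski space. To check it, square both sides: it reduces to $a_1a_2-b_1b_2\ge\sqrt{(a_1^2-b_1^2)(a_2^2-b_2^2)}$. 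The left side is nonnegative because $a_1a_2\ge b_1b_2$. Squaring once more, the inequality becomes $(a_1b_2-a_2b_1)^2\ge0$, which always holds. Note also that the diagonal gives $\ell((s,x),(s,x))=0$, consistent with the definition. Finally, $\ell_{\R\times Z}$ takes values in $\{-\infty\}\cup[0,\infty)$, as required.

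\emph{Boundedness on the diamond.} Take $p=(s,x)$ and $q=(t,y)$ in $J((-R,x_0),(R,x_0))$. Causality of $(-R,x_0)\le p$ forces $s\ge -R+\mathsf d(x_0,x)\ge -R$. Likewise $q\le(R,x_0)$ forces $t\le R$. Hence, whenever $\ell(p,q)\neq-\infty$,
\[
\ell(p,q)\le t-s\le 2R.
\]
So $\ell$ restricted to the diamond is bounded above by $2R$. The diamond with the restricted time separation is therefore a bounded metric spacetime, using that restrictions of metric spacetimes are again metric spacetimes, as noted in the Example above.

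The only step that is not pure bookkeeping is the $2$-dimensional reverse triangle inequality, and the squaring argument above dispatches it.
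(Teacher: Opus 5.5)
Your proof is correct and is the elementary verification the paper has in mind; the paper states the lemma without proof, calling it elementary. You reduce, via the triangle inequality for $\mathsf d$, to the reverse triangle inequality for future causal vectors in $2$-dimensional Minkowski space (checked by squaring twice), and on the diamond you bound $\ell((s,x),(t,y))\le t-s\le 2R$, which is all that is needed.
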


\begin{Remark}[On the spacelike boundary]
\label{Remark: i^0}
If $J(x,y)$ is a causal diamond in a globally hyperbolic spacetime $(\mathcal M,\mathsf g)$ (or in a product metric spacetime $(\R \times Z, \ell_{\R \times Z})$), we denote by $i^0$ (or $\bar i^0$) the set of points $w \in J(x,y)$ such that $\ell(p,w)  \leq 0$ and $\ell(w,p) \leq 0$ for all $p \in J(x,y)$. 
When we want to compute the Lorentzian Gromov--Hausdorff distance, we will first collapse the set $i^0$ and $\bar i^0$ (if non-empty) to a point, which is exactly the setting considered by Minguzzi--Suhr \cite{MinguzziSuhr22}. Since these quotients are bounded metric spacetimes with respect to the restriction of $\ell$, and the positive part of this restriction agrees with the positive part of $\ell$ on $J(x,y)$, for the purposes of computing the Lorentzian Gromov--Hausdorff distances between two such causal diamonds, one may choose to collapse $i^0$ and $\bar i^0$ separately without changing the result.
\end{Remark}

\subsection{Causal and causally convex functions, gradient flows}
\label{subsection: causally convex functions grad flows}

Let us now discuss causally $\lambda$-concave and $\lambda$-convex functions on spacetimes. Their analysis will be crucial in the proof of the almost-splitting theorem, primarily because a timelike sectional curvature lower bound implies a certain $\lambda$-convexity for modified Lorentz distance functions (i.e. time separations to and from points, Theorem \ref{Theorem: Toponogov convexity version}).

First, let us define (anti-)causal functions as well as the property of $c$-steepness. We restrict ourselves to globally hyperbolic smooth spacetimes and real-valued functions since we do not need more, but see \cite{beran2024nonlinear} for a much deeper study of (even infinite-valued) causal functions in a very general non-smooth setting.

\begin{Definition}[(Anti-)causal functions, $c$-steepness]
Let $(\mathcal M,\mathsf g)$ be a globally hyperbolic spacetime and $C \subseteq \mathcal M$.
\begin{enumerate}
    \item[(i)] A function $f:C \subseteq \mathcal M \to \R$ is called \textit{causal} if $f(x) \leq f(y)$ whenever $x,y \in C$ with $x\leq y$. It is called \textit{anti-causal} if instead $f(y) \leq f(x)$ whenever $x,y \in C$ with $x \leq y$. 
    \item[(ii)] A function $f:C \subseteq \mathcal M \to \R$ is called $c$-steep (for some number $c \geq 0$) if for every $x,y \in C$ we have that
    \begin{equation}
        f(y) - f(x) \geq c\ell(x,y).
    \end{equation}
    The function $f$ is called $c$-\textit{antisteep} if instead for all $x,y \in C$ we have that
    \begin{equation}
        f(x) - f(y) \geq c \ell(x,y).
    \end{equation}
\end{enumerate}
\end{Definition}

\begin{Definition}[Causally convex sets]
    A subset $C \subseteq \mathcal M$ of a globally hyperbolic spacetime $(\mathcal M,\mathsf g)$ is called \emph{causally convex} 
    if $J(x,y) \subseteq C$ whenever $x$ and $y$ are in $C$. 
    Equivalently, for all $x,y \in C$, any causal curve segment from $x$ to $y$ is entirely contained in $C$.
\end{Definition}

Examples of causally convex sets are chronological diamonds $I(x,y)$ and causal diamonds $J(x,y)$. In particular, every point in a globally hyperbolic spacetime has a neighbourhood basis of causally convex sets.

\begin{Definition}[Causal $\lambda$-convexity and concavity]
    Let $(\mathcal M,\mathsf g)$ be a globally hyperbolic spacetime and $C \subseteq \mathcal M$ be an open causally convex set. Given $\lambda \in \R$ we say that a function $f : C \rightarrow \R$ is \emph{causally $\lambda$-convex on $C$} provided, for all causal geodesics $\sigma : [0,1] \rightarrow C$ and for every $t \in [0,1]$, we have that
    \begin{equation} \label{eq: definition of concavity}    f(\sigma_t) \leq (1-t)f(\sigma_0) + tf(\sigma_1) - \frac{\lambda}{2} t(1-t) L_\mathsf g(\sigma)^2.
    \end{equation}
where we recall that $L_\mathsf g(\sigma)$ denotes the Lorentzian arclength of $\sigma$. Equivalently, the second derivative of $f$ satisfies $\frac{d^2}{dt^2} f( \sigma (t)) \geq \lambda L_{\mathsf g}(\sigma)^2$ in the distributional sense. A function $f:C \rightarrow \R$ is called \emph{causally $\lambda$-concave} if instead the reverse inequality holds in \eqref{eq: definition of concavity}.
\end{Definition}

Observe that, to check causal $\lambda$-convexity of a continuous $f: C \to \R$  (resp.\ causally $\lambda$-concavity), it is enough to check the convexity inequality along timelike geodesics which are parametrized proportional to arclength, since null geodesics $\sigma:[0,1] \to C$ can be obtained as pointwise limits of such timelike ones. Moreover, observe that causal $\lambda$-convexity with $\lambda > 0$ implies causal convexity (i.e., causal $\lambda$-convexity with $\lambda = 0$).

\begin{Theorem}[Quantitative causal $\lambda$-convexity of modified time separations]\label{Theorem: convexityoftimeseparation}
Let $(\mathcal M,\mathsf g)$ be a globally hyperbolic spacetime with timelike sectional curvature bounded below by $-\delta \leq 0$ and let $\Upsilon:[-L,L] \to \mathcal M$ be a timelike maximizing geodesic of length $2L$. Then, for a fixed constant $0 < R < \frac{L}{2}$, the Lipschitz continuous causal function $\mathrm{md}^{-\delta}(\ell(\Upsilon_{-L}, \cdot))$ and the Lipschitz continuous anti-causal function $\mathrm{md}^{-\delta}(\ell(\cdot,\Upsilon_L))$ are causally $\lambda$-convex on the open connected causally convex set $I(\Upsilon_{-R},\Upsilon_R)$, where 
\begin{equation}\label{md}
    \mathrm{md}^{-\delta}(t):=\begin{cases}
        \frac{\cosh(\sqrt{\delta} t) - 1}{\delta}, & {\delta > 0},\\
        \frac{t^2}{2}, & \delta = 0,
    \end{cases}
\end{equation}
and
\begin{equation}
    \lambda = \lambda(\delta, L \, | \, R) = \cosh(\sqrt{\delta} (L-R)).
\end{equation}
In fact, the two functions are even causally $\lambda$-convex (with the same $\lambda$ as above) on the larger open causally convex set $\{\ell(\Upsilon_{-L}, \cdot) > L - R\} \cap \{\ell(\cdot,\Upsilon_L) > L - R\} \subseteq I(\Upsilon_{-L},\Upsilon_L)$. Moreover, $\lambda(\delta,L \, |R) \downarrow 1$ as $(\delta, L^{-1}) \to 0$ if $\sqrt{\delta} L \to 0$.
\end{Theorem}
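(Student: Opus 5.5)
Set $p := \Upsilon_{-L}$ and $f := \mathrm{md}^{-\delta}(\ell(p,\cdot))$. The plan is to deduce the claim from Theorem \ref{Theorem: Toponogov convexity version}, and then handle $\ell(\cdot,\Upsilon_L)$ by time-duality: reverse the time orientation, which preserves the timelike sectional curvature bound and turns $\ell(\cdot,\Upsilon_L)$ into a future distance from $\Upsilon_L$.

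\emph{Regularity and causality.} Since $\ell$ is reverse-triangle, $\ell(p,\cdot)$ is causal on $I^+(p)$. Because $\mathrm{md}^{-\delta}$ is nondecreasing on $[0,\infty)$, $f$ is causal as well. By global hyperbolicity, $\ell(p,\cdot)$ is locally Lipschitz on $I^+(p)$, hence Lipschitz on compact subsets. The larger set $U := \{\ell(p,\cdot) > L-R\} \cap \{\ell(\cdot,\Upsilon_L) > L-R\}$ is open, being defined by lower semicontinuous $\ell$, which is in fact continuous here. It is causally convex: the first function is causal and the second anti-causal, so both defining inequalities persist along causal curves between points of $U$. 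The inclusion $I(\Upsilon_{-R},\Upsilon_R) \subseteq U$ follows from $\ell(p,x) \ge \ell(p,\Upsilon_{-R}) + \ell(\Upsilon_{-R},x) > L-R$, and similarly on the future side. Note also $U \subseteq I^+(p)$.

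\emph{Convexity.} Take a timelike geodesic $\gamma:[0,1] \to U$. It suffices to work with timelike geodesics, since null ones follow by limits as remarked after the definition. As $\gamma \subseteq I^+(p)$, Theorem \ref{Theorem: Toponogov convexity version} gives, distributionally,
\[
f''(t) \ge L_{\mathsf g}(\gamma)^2 \bigl(1 + \delta f(t)\bigr).
\]
For $\delta > 0$ we have $1 + \delta f = \cosh(\sqrt\delta\, \ell(p,\gamma_t)) \ge \cosh(\sqrt\delta (L-R))$, because $\ell(p,\gamma_t) > L-R$ on $U$ and $\cosh$ is increasing on $[0,\infty)$. For $\delta = 0$ the bound is $1 = \lambda$. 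Hence $f'' \ge \lambda L_{\mathsf g}(\gamma)^2$, which is equivalent to the integrated inequality \eqref{eq: definition of concavity}: subtract $\tfrac{\lambda}{2} L_{\mathsf g}(\gamma)^2 t^2$ to get a distributionally convex continuous function.

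\emph{Main obstacle.} The nontrivial step is ensuring that Theorem \ref{Theorem: Toponogov convexity version} genuinely applies along geodesics that are not maximizing, with $p$ possibly having cut points relative to $\gamma_t$. That theorem already covers arbitrary timelike geodesics in $I^+(p)$ via Harris's comparison, so I only need to check that $\gamma \subseteq I^+(p)$. The maximality of $\Upsilon$ is used only to guarantee that $U$ contains $I(\Upsilon_{-R},\Upsilon_R)$ with the lower bound $L-R$, and hence that $U$ is nonempty and large.

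\emph{The limit $\lambda \downarrow 1$.} Since $\lambda = \cosh(\sqrt\delta(L-R))$, if $\sqrt\delta L \to 0$ then $\sqrt\delta(L-R) \to 0$, so $\lambda \to 1$ from above.
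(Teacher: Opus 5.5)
Your proposal is correct and follows the paper's proof. Both apply the Lorentzian Toponogov convexity theorem along timelike geodesics in the relevant set, and both bound $1+\delta f = \cosh(\sqrt{\delta}\,\ell(\Upsilon_{-L},\gamma_t))$ from below by $\cosh(\sqrt{\delta}(L-R))$, using $\ell(\Upsilon_{-L},\cdot) > L-R$ there. Your extra details (openness and causal convexity of the larger set, and time reversal to handle $\ell(\cdot,\Upsilon_L)$) make explicit what the paper's one-line proof leaves implicit.
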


\begin{proof}
    This is an immediate consequence of Theorem \ref{Theorem: Toponogov convexity version}, noting that the values of $\ell(\Upsilon_{-L},\cdot)$ and $\ell(\cdot,\Upsilon_L)$ are contained in the interval $(L - R, L + R)$ on $I(\Upsilon_{-R},\Upsilon_R)$. Thus, writing 
    \[
    f_\delta(t):=\mathrm{md}^{-\delta}(\ell(\Upsilon_{-L},\sigma_t))
    \]
    (or $f_\delta(t):=\mathrm{md}^{-\delta}(\ell(\sigma_t,\Upsilon_L))$) for any timelike geodesic $\sigma:[0,1] \to I(\Upsilon_{-R},\Upsilon_R)$, we have that
    \begin{equation}
        f_\delta'' \geq L_\mathsf g(\sigma)^2(1 {+} \delta f_\delta) \geq L_\mathsf g(\sigma)^2 (1 {+} \delta \mathrm{md}^{-\delta}(L-R)) = L_\mathsf g(\sigma)^2 \lambda.
    \end{equation}
    The same estimates work on the larger causally convex set $\{\ell(\Upsilon_{-L}, \cdot) > L - R\} \cap \{\ell(\cdot,\Upsilon_L) > L - R\} \cap I(\Upsilon_{-L},\Upsilon_L)$.
\end{proof}

\begin{Definition}[Causal subdifferential] Let $(\mathcal M,\mathsf g)$ be a globally hyperbolic spacetime, $C \subseteq \mathcal M$ an open and causally convex subset, and $f: C \rightarrow \mathbb R$ be a causally convex function. For $x \in C$ we define the \emph{causal subdifferential of $f$ at $x$} as the set
    \[
    \partial_\leq f(x) := \{ \omega \in  T^*_x \mathcal M 
    : f(x) + \omega[v] \leq f(\exp_x(v)) \; \forall \text{ causal } v \in \exp_x^{-1}(C)\}.
    \]
\end{Definition}

\begin{Lemma}[Monotonicity of the causal subdifferential]\label{Lemma: Monotonicitysupergradient}
Let $(\mathcal M,\mathsf g)$ be globally hyperbolic, $C \subseteq \mathcal M$ be open and causally convex, $f:C \to \R$ be $\lambda$-convex causal (or anti-causal), and $\sigma:[0,1] \to C$ be a maximizing timelike geodesic connecting $x:=\sigma_0$ to $y:= \sigma_1$. Then any $v \in \partial_\leq f(x)$ and $w \in \partial_\leq f(y)$ satisfy
\begin{equation}
    w[\sigma'_1] - v[\sigma'_0] \geq \lambda \ell(x,y)^2.
\end{equation}
\end{Lemma}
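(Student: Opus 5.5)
The plan is to use the geodesic $\sigma$ to turn the definition of the causal subdifferential into two one-sided difference quotients, and then compare them using $\lambda$-convexity of $h(t):=f(\sigma_t)$ on $[0,1]$. Since $\sigma$ is a timelike geodesic contained in the open causally convex set $C$, each segment $\sigma|_{[0,t]}$ is a causal geodesic in $C$. By definition of causal $\lambda$-convexity,
\begin{equation*}
 h(t)\le (1-t)h(0)+t h(1)-\tfrac{\lambda}{2}t(1-t)\ell(x,y)^2 \qquad \text{for all } t\in[0,1].
\end{equation*}
Here we used that $\sigma$ is maximizing, so $L_{\mathsf g}(\sigma)=\ell(x,y)$; this is where the maximizing hypothesis enters.

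First I bound $v[\sigma_0']$ from above. For $t\in(0,1]$ the vector $t\sigma'_0$ is future causal and $\exp_x(t\sigma_0')=\sigma_t\in C$, so $t\sigma_0'\in\exp_x^{-1}(C)$. The subdifferential inequality gives $h(0)+t\,v[\sigma_0']\le h(t)$. Combining this with the convexity bound and dividing by $t$, we get
\begin{equation*}
 v[\sigma_0']\le h(1)-h(0)-\tfrac{\lambda}{2}(1-t)\ell(x,y)^2,
\end{equation*}
and letting $t\downarrow 0$ yields $v[\sigma_0']\le h(1)-h(0)-\tfrac{\lambda}{2}\ell(x,y)^2$.

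At $y$ the natural causal direction toward $x$ is $-\sigma_1'$, which is past causal. The definition of $\partial_\le f$ quantifies over all causal $v\in\exp^{-1}(C)$, which I read as including past-directed ones. Then $\exp_y(-s\sigma_1')=\sigma_{1-s}\in C$ gives $h(1)-s\,w[\sigma_1']\le h(1-s)$. Together with the convexity bound at $t=1-s$, this gives
\begin{equation*}
 w[\sigma_1']\ge h(1)-h(0)+\tfrac{\lambda}{2}(1-s)\ell(x,y)^2,
\end{equation*}
and letting $s\downarrow0$ gives $w[\sigma_1']\ge h(1)-h(0)+\tfrac{\lambda}{2}\ell(x,y)^2$. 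Subtracting the two estimates gives $w[\sigma_1']-v[\sigma_0']\ge\lambda\ell(x,y)^2$. The causal versus anti-causal hypothesis plays no role beyond making $\partial_\le f$ meaningful.

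The main point to check is the interpretation of "causal $v$" in the definition of $\partial_\le f$, namely that it allows both time orientations. If only future-directed vectors were allowed, the estimate at $y$ could not use $-\sigma_1'$. In that case I would instead take the subgradient inequality at $y$ only in the direction needed, or use the relation with $x$ via a future vector from $y$ extended beyond $\sigma$, which is less clean. I expect the intended reading to be the two-sided one, since convex functions are typically tested in all causal directions. With that reading the argument is just the two chord–tangent inequalities above, and each limit is elementary because $h$ is continuous.
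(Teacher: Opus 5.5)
Your proof is correct and is essentially the paper's argument: one chord--tangent inequality at $x$ in the direction $\sigma_0'$, and one at $y$ along the past reparametrization (direction $-\sigma_1'$), each combined with $\lambda$-convexity along $\sigma$ using $L_{\mathsf g}(\sigma)=\ell(x,y)$. Your two-sided reading of ``causal $v$'' is the intended one, since the paper defines causal vectors as timelike or null without reference to time orientation, and its own proof likewise tests the subdifferential at $y$ with a past-directed vector.
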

\begin{proof}
    Let us write $\sigma_t = \exp_x(t\sigma'(0))$ for $t \in [0,1]$. Using the definition of causal subdifferential as well as causal $\lambda$-convexity, we get
    \begin{align*}
        v[\sigma'_0] \leq \limsup_{t \downarrow 0} \frac{f(\sigma_t) - f(\sigma_0)}{t} \leq f(y) - f(x) - \frac{\lambda}{2} \ell(x,y)^2.
    \end{align*}
    Similarly, considering the past reparametrization of $\sigma$, we obtain
    \begin{align*}
        w[\sigma'_1] \geq f(y) - f(x) + \frac{\lambda}{2}\ell(x,y)^2.
    \end{align*}
    Adding these two estimates yields the claim.
\end{proof}

Basic properties of the causal subdifferential are collected in \cite{BraunGigliMcCannVinciniZanardini}. We recall that if the causally convex function $f$ is smooth at $x\in M$ then the causal subdifferential at $x$ is given by the singleton $\{{d} f_x \}$. The Lorentzian setting requires our gradient flows to be curves of `gentlest ascent' (rather than the `steepest descent' more familiar from Riemannian geometry). For $v \in T_xM$, let $v^\flat(\cdot) := g(\cdot, v) \in T_x^*M$ denote the musical isomorphism.
\begin{Theorem}[Existence and stability of gradient flows]
\label{Theorem: local existence}
    Let $(\mathcal M,\mathsf g)$ be a globally hyperbolic spacetime. Let $C \subseteq \mathcal M$ be a connected causally convex open subset and $f: C \rightarrow \mathbb R$ a $c$-steep causally convex function for some $c > 0$. Then $f$ is locally Lipschitz on $C$ and for every $x \in C$, there exists a unique locally Lipschitz causal curve $\gamma:[0,T) \to C$ for some maximal $T>0$ such that
    \begin{itemize}
        \item[i)] $\dot \gamma_t ^{\flat} \in \partial_{\leq} f(\gamma_t)$ for a.e. $t \in [0,T)$ and $\gamma_0=x$, and
        \item[ii)] $ \frac{d}{ds} \big |_{s=t} \, f(\gamma_s) = \| \dot \gamma_t \|_\mathsf g^2 = \left ( \inf_{y \gg \gamma_t} \frac{f(y) - f(\gamma_t)}{\ell(\gamma_t,y)} \right )^2$ for a.e. $t \in [0,T)$.
    \end{itemize}
    Moreover, if $C$ has compact closure in $M$ we have that $\gamma_T:=\lim_{t \uparrow T} \gamma_t \in \partial C$. Such a curve $\gamma$ is called the \emph{gradient flow of $f$ starting at $x$}.

    In addition, the gradient flow is stable in the following sense: If $(x_n) \downarrow x$ in $C$, i.e., $x_{n+1} \leq x_n$ for all $n$ and $x_n \to x$, then the (unique) gradient flows of $f$ starting at $x_n$ converge locally uniformly to the (unique) gradient flow of $f$ starting at $x$.
\end{Theorem}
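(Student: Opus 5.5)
Since this theorem is essentially imported from the work of Braun--Gigli--McCann--Vincini--Zanardini, I would prove it with a minimizing-movement / time-discretization construction, and use the monotonicity of the causal subdifferential (Lemma~\ref{Lemma: Monotonicitysupergradient}) for uniqueness and stability.

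\emph{Local Lipschitz regularity.} Fix $x\in C$ and a small causally convex neighbourhood $I(p,q)\Subset C$ of $x$. Causal convexity along timelike geodesics through $x$ bounds $f$ on segments from below. Causality of $f$, which follows from $c$-steepness, bounds it from above on $J(p,q)$ by $f(q)$ and from below by $f(p)$. These one-sided estimates, applied in a family of timelike directions spanning $T_x\mathcal M$, give local Lipschitz bounds, in the same way as the classical argument that convex functions are locally Lipschitz. Every point near $x$ can be reached by short causal geodesics from points in $I(p,x)$ and to points in $I(x,q)$, which propagates the bound. In particular $\partial_\leq f(x)$ is nonempty, compact, and contained in the closed cone of future causal covectors. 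Steepness gives $\omega[v]\ge c\|v\|_{\mathsf g}$ for future causal $v$, so $\omega^\sharp$ is timelike with $\|\omega^\sharp\|\ge c$.

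\emph{Existence.} Define the discrete scheme $x^\tau_{k+1}\in\operatorname{argmax}\{\, f(y)-f(x^\tau_k)-\ell(x^\tau_k,y)^2/(2\tau)\ \text{(suitably signed)}\,\}$ over $y\in J^+(x^\tau_k)\cap C$. This is the Lorentzian ``gentlest ascent'' analogue of minimizing movements; equivalently one can use the Legendre-type dual $\inf_{y\gg x}(f(y)-f(x))/\ell(x,y)$. Compactness of diamonds (global hyperbolicity) gives existence of the maximizers. Steepness gives a uniform lower bound on $\ell$-speed and prevents the steps from degenerating to null. The local Lipschitz bound gives an upper bound on the speed. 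Interpolating by geodesics gives uniformly Lipschitz causal curves, and limit curve theorems in the globally hyperbolic $C$ yield a limit causal curve $\gamma$. The discrete Euler--Lagrange condition puts $(x^\tau_{k+1}-x^\tau_k)/\tau$, in exponential coordinates, into $\partial_\leq f$. Upper semicontinuity of $\partial_\leq f$, which is closed under limits by continuity of $f$, then gives (i).

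\emph{Energy identity.} For (ii), use the chain rule along the Lipschitz curve: for a.e.\ $t$ we have $\frac{d}{dt}f(\gamma_t)=\omega[\dot\gamma_t]$ for every $\omega\in\partial_\leq f(\gamma_t)$. Taking $\omega=\dot\gamma_t^\flat$ gives $\|\dot\gamma_t\|^2$. The reverse Cauchy--Schwarz inequality, together with $\lambda$-convexity at small scales, identifies $\|\dot\gamma_t\|$ with the minimal slope $\inf_{y\gg\gamma_t}(f(y)-f(\gamma_t))/\ell(\gamma_t,y)$; that is, $\dot\gamma_t^\flat$ is the element of the subdifferential of minimal Lorentzian norm.

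\emph{Uniqueness, maximality and stability.} Uniqueness and stability come from a contraction-type estimate. For two flows $\gamma,\eta$ with $\gamma_s\le\eta_s$, differentiate $\ell(\gamma_s,\eta_s)$ using the first variation formula and Lemma~\ref{Lemma: Monotonicitysupergradient}; this shows the relevant quantity is monotone. Since $x_n\downarrow x$ means $x\le x_n$, ordered initial data suffice. Comparing with the stationary ordering then yields uniqueness and locally uniform convergence. Maximality and $\gamma_T\in\partial C$ follow from the speed bounds and compactness of $\bar C$: the limit exists, and if it lay in $C$ the flow could be restarted.

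\emph{Main obstacle.} The hardest step is the existence and identification step. Transferring the discrete optimality condition to the continuum subdifferential, and proving the minimal-slope identity in Lorentzian signature, is delicate because the reverse Cauchy--Schwarz inequality and the nonsmooth Lorentzian geometry replace Hilbert-space convexity.
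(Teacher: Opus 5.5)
Your existence strategy (minimizing movements) and your restart argument for $\gamma_T\in\partial C$ are in line with the paper. The uniqueness and stability step, however, fails as proposed. Differentiating $\ell(\gamma_s,\eta_s)$ with the first variation formula and Lemma~\ref{Lemma: Monotonicitysupergradient} yields exactly the synchronous estimate of Proposition~\ref{Prop: first expansion rate estimate}, $\ell(\gamma_t,\eta_t)\ge e^{\lambda t}\ell(\gamma_0,\eta_0)$. In Lorentzian signature this is an \emph{expansion} (lower) bound, not a contraction. Its derivation requires a maximizing timelike geodesic from $\gamma_s$ to $\eta_s$, i.e.\ $\gamma_s\ll\eta_s$, so for two flows issuing from the same point (where $\ell=0$) it never gets started. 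Indeed, the paper passes from $\alpha_0\ll\beta_0$ to $\alpha_0\le\beta_0$ in that proposition only by invoking the stability part of the very theorem you are proving, so using it for uniqueness is circular. The estimate does not help with stability either: a lower bound on $\ell(\gamma^x_t,\gamma^{x_n}_t)$ says nothing about whether $\gamma^{x_n}_t\to\gamma^x_t$; even $\ell\to0$ would not imply this, because of null separations. The paper's route is different. Uniqueness comes from characterizing the minimizing-movement limit via the EDE/EVI formulations, and stability comes from the limit curve theorem (compactness of the flows from $x_n$) together with that uniqueness to identify the limit.

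The existence and identification steps also have concrete problems. First, the discrete scheme as written is not well posed. In Minkowski space take $f=\mathsf g(a,\cdot)$ with $a$ future timelike; this is causally convex and $\|a\|_{\mathsf g}$-steep. With $w=y-x^\tau_k$ your functional is $\mathsf g(a,w)-\mathsf g(w,w)/(2\tau)$. Its only critical point $w=\tau a$ (the implicit Euler step) is a saddle, since the Hessian $-\mathsf g/\tau$ is negative on timelike and positive on spacelike vectors; the functional tends to $+\infty$ along null rays and to $-\infty$ along $w=sa$. So with either sign, any optimizer over $J^+(x^\tau_k)\cap C$ (if it exists at all) lies on the boundary of that set rather than at the Euler step. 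That set is also not a causal diamond, so compactness of diamonds does not produce optimizers anyway; a genuinely Lorentzian scheme is required.

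Second, in (ii) the signature reverses your conclusion. For $\omega\in\partial_\le f(x)$, reverse Cauchy--Schwarz plus convexity gives $\|\omega^\sharp\|_{\mathsf g}\le\inf_{y\gg x}\frac{f(y)-f(x)}{\ell(x,y)}$. Your chain rule gives $\|\dot\gamma_t\|_{\mathsf g}^2=\omega[\dot\gamma_t]\ge\|\omega^\sharp\|_{\mathsf g}\|\dot\gamma_t\|_{\mathsf g}$, so $\dot\gamma_t^\flat$ is the subgradient of \emph{maximal}, not minimal, Lorentzian norm. Moreover, these inequalities only show that $\|\dot\gamma_t\|_{\mathsf g}$ is at most the slope. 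The reverse inequality is the actual content of (ii), and it must be extracted from the energy estimate of the scheme, i.e.\ from the energy dissipation equality the paper appeals to.
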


\begin{proof}
    Existence and uniqueness are proved 
    via the minimizing movements scheme. The unique solution is characterized
    via the energy dissipation equality (EDE) and evolution variation inequality (EVI) formulations of gradient flows, cf. \cite{BraunGigliMcCannVinciniZanardini}. The claim that $\gamma_T \in \partial C$ follows by contradiction. The stability is a simple consequence of the limit curve theorem and uniqueness, cf.\  \cite[Cor.\ 4.22]{BraunGigliMcCannVinciniZanardini}.
\end{proof}

\section{The Lorentzian almost-splitting theorem}
\label{Section: Almost splitting}

This is the main section of the article and is dedicated to the proof of our main theorem, which we first recall for convenience.

\subsection{The main result}
\label{subsection: main result}

\begin{Theorem}[Almost-splitting under timelike sectional curvature bounds]
\label{Theorem: main theorem}
Fix $R > 0$. Then for every $\delta \geq 0$ and $L \gg R$ there exists a quantity $\Psi = \Psi(\delta,L \, | R)$ with $\Psi(\delta,L \, | R) \to 0$ as $(\delta, L^{-1}) \to 0$, such that the following holds: If $(\mathcal M,\mathsf g)$ is a globally hyperbolic spacetime with timelike sectional curvature bounded below by $-\delta \leq 0$ and there exists a maximizing timelike geodesic $\Upsilon:[-L,L] \to \mathcal M$ parametrized by Lorentzian arclength, then there exists a pointed metric space $(X,\mathsf d, x_0)$ such that
\begin{equation}
    {\mathsf d}_{LGH}(J^\mathcal M(\Upsilon_{-R},\Upsilon_R), J^{\R \times X}((-R,x_0),(R,x_0))) \leq \Psi(\delta,L\,|\,R).
\end{equation}
\end{Theorem}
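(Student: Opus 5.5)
We follow the Cheeger--Colding strategy, with approximate Busemann functions replaced by the two Lorentzian distances $b^-:=\ell(\Upsilon_{-L},\cdot)-L$ and $b^+:=L-\ell(\cdot,\Upsilon_L)$, and with Theorem \ref{Theorem: convexityoftimeseparation} replacing the Laplacian comparison.

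\textbf{Step 1: Busemann sum.} By the reverse triangle inequality, $b^+-b^-\geq 0$ on $J(\Upsilon_{-R},\Upsilon_R)$, with equality on $\Upsilon$. We aim to show $0\le b^+-b^-\le\Psi(\delta,L)$ on the diamond. Take $z\in J(\Upsilon_{-R},\Upsilon_R)$ with, say, $\Upsilon_{-R}\le z\le\Upsilon_R$. Apply the Toponogov comparison, Theorem \ref{thm: triangle comparison}, to the triangles $(\Upsilon_{-L},\Upsilon_{-R},z)$ and $(z,\Upsilon_R,\Upsilon_L)$. Combining this with the law of cosines, Proposition \ref{Thm : Law of cosines} (B),(C), expanded in $1/L$ and $\delta L^2$, gives $\ell(\Upsilon_{-L},z)+\ell(z,\Upsilon_L)\ge 2L-O(R^2/L)-O(\delta L^2\cdot\ldots)$. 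We will need the regime $\sqrt\delta L\to0$ or a suitable coupling; otherwise we replace $L$ by $L'=\min(L,\delta^{-1/3})$ along a subsegment, since a maximizing subsegment is again maximizing. This reduction is what makes $\Psi\to0$ as $(\delta,L^{-1})\to0$.

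\textbf{Step 2: Gradient flows and almost-linearity.} Consider the causally $\lambda$-convex, steep function $\mathrm{md}^{-\delta}(\ell(\Upsilon_{-L},\cdot))$ on the larger set of Theorem \ref{Theorem: convexityoftimeseparation}. Its gradient flow (Theorem \ref{Theorem: local existence}), reparametrized so that $b^-$ increases at unit rate, gives a flow $\Phi_t$. Using Lemma \ref{Lemma: Monotonicitysupergradient} for the two flows (of $b^-$ and of $-b^+$) together with Step 1, we would show the following:
\begin{itemize}
\item flow lines are almost maximizing, with $\ell(x,\Phi_t x)=t+\Psi$;
\item $|\dot\Phi|_{\mathsf g}\approx1$;
\item the flow almost does not expand the spatial distance.
\end{itemize}
The last item is the expansion estimate, which comes from integrating $\lambda$-convexity: $\lambda\approx1$ and the $\mathsf g$-norm of the subgradient is $\approx1$ because $b^+\approx b^-$ forces the sub- and supergradients of the two functions to nearly coincide.

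\textbf{Step 3: Metric on the level set.} Let $X:=\{b^-=0\}\cap J(\Upsilon_{-R},\Upsilon_R)$ and $x_0=\Upsilon_0$. Define $\mathsf d(x,y)$ by Lorentzian Pythagoras: set $\mathsf d(x,y)^2:=\lim$ of $t^2-\ell(x,\Phi_t y)^2$, or symmetrize as $\sup_t\big(t^2-\ell_+(\Phi_{-t}x,\Phi_t y)^2/4\ldots\big)$. The precise choice should be made so that the triangle inequality holds exactly, for instance by taking the induced length/chain metric and showing it agrees with the Pythagoras expression up to $\Psi$. The key almost-Pythagoras claim is $\ell(\Phi_s x,\Phi_t y)^2=(t-s)^2-\mathsf d(x,y)^2\pm\Psi$ whenever the left side is positive. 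We would prove this via the law of cosines (A),(B) applied to triangles with one vertex far out at $\Upsilon_{\pm L}$, using almost-linearity of $b^\pm$ along flow lines.

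\textbf{Step 4: Correspondence.} Relate $z\in J^{\mathcal M}(\Upsilon_{-R},\Upsilon_R)$ to $(b^-(z),\pi(z))$, where $\pi(z)$ is the point of $X$ obtained by flowing $z$ (forward or backward) to the level set. Points of the product diamond not hit are within $\Psi$ of image points, after collapsing $i^0$ as in Remark \ref{Remark: i^0}. Steps 1--3 then bound the distortion by $\Psi$.

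The main obstacle is Step 3. Backward flow is not available for a convex gradient flow, so both $\pi$ and the Pythagoras identity must be formulated through forward flows only, for example by flowing both points to a common later level. The errors must also be kept uniform near the null boundary of the diamond, where $\ell$ is not Lipschitz-controlled. I would first attempt the estimate for pairs in the interior $I(\Upsilon_{-R+\epsilon},\Upsilon_{R-\epsilon})$, and then absorb the boundary layer using continuity of $\ell_+$ and the fact that it contributes only an $O(\sqrt{\epsilon R})$ error.
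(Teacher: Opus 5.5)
\textbf{Main gap: the upper bound in almost-Pythagoras.} Your overall architecture matches the paper's: the two distances $F=\ell(\Upsilon_{-L},\cdot)$ and $G=\ell(\cdot,\Upsilon_L)$, causal $\lambda$-convexity of $\mathrm{md}^{-\delta}\circ F$, a reparametrized gradient flow, the level set $\{F=L\}$, and a projection-based correspondence. However, the step that carries all the content, the two-sided almost-Pythagoras identity, is not supported by the mechanism you propose.

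$\lambda$-convexity of $\mathrm{md}^{-\delta}\circ F$ by itself only gives \emph{lower} bounds on $\ell$ between flowed points. This is your ``the flow almost does not expand spatial distance'', and the paper's expansion estimates. The matching \emph{upper} bound (no spatial contraction) cannot come from the law of cosines with a vertex at $\Upsilon_{\pm L}$. Under a lower curvature bound, triangle comparison bounds $\ell$ from above only between points lying on geodesic sides, and $\Phi_s x$, $\Phi_t y$ are not on such sides; almost-maximality of a flow line does not put its points on one.

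Your remark that backward flow is unavailable discards exactly the tool the paper uses: the past-directed gentlest-ascent flow $\Fl^-$ of the causally convex function $\mathrm{md}^{-\delta}\circ G$.
\begin{itemize}
\item For $\ell(\Fl^+_s(x),y)$, the paper applies the lower bounds to $w_t:=\Fl^-_t(\Fl^+_s(x))$ relative to both $x$ and $y$. It uses $F+G=2L+\Psi$ and almost-linearity to show that $t\mapsto w_t$ first meets $\partial J^+(x)$ at $\bar t=s+\Psi$. Then $\ell(x,y)\ge\ell(x,w_{\bar t})+\ell(w_{\bar t},y)$ with $\ell(x,w_{\bar t})=0$ forces every lower bound in the chain to be an equality up to $\Psi$.
\item For $\ell(x,\Fl^+_t(y))$, it takes Dini derivatives of $\ell(x,y_r)^2/2$. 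It squeezes $\mathsf g(\dot y_r,\dot\sigma_1)$, with $\sigma$ the maximizer from $x$ to $y_r$ extended slightly past $y_r$, between the subdifferential inequality for $\mathrm{md}^{-\delta}\circ F$ tested in both directions and convexity of $\mathrm{md}^{-\delta}\circ G$. The inputs are $|\mathrm{md}^{-\delta}(F)+\mathrm{md}^{-\delta}(G)-L^2|\le C$ and $(\mathrm{md}^{-\delta})'(F)\approx L$.
\end{itemize}
The second item is your ``sub- and supergradients nearly coincide'' idea. It has to be applied to $\mathsf g(\dot\Phi,\dot\sigma)$ along the connecting maximizer, not merely to $|\dot\Phi|_{\mathsf g}$.

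The same omission leaves Step 4 undefined. A point with $F>L$ cannot reach $X$ by forward flow, and inverting the forward flow would need injectivity and surjectivity you do not have. The paper projects such points with $\Fl^-$ and shows that $\Fl^\pm$ are approximate inverses.

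\textbf{Step 1: the two triangles do not interact.} The triangles $(\Upsilon_{-L},\Upsilon_{-R},z)$ and $(z,\Upsilon_R,\Upsilon_L)$ share only $z$. Comparison in them gives nothing beyond $F(z)+G(z)\ge 2L-2R+\ell(\Upsilon_{-R},z)+\ell(z,\Upsilon_R)$, which degenerates near $i^0$. The cancellation needs a common hinge. The paper expands $\mathrm{md}^{-\delta}\circ F$ and $\mathrm{md}^{-\delta}\circ G$ along one geodesic $\sigma$ from $\Upsilon_{-20}$ to $x$, so the terms $\pm(\mathrm{md}^{-\delta})'(L\mp20)\langle\Upsilon'_{-20},\sigma'_0\rangle$ cancel to leading order. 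Comparing in the single triangle $(\Upsilon_{-L},z,\Upsilon_L)$, with $\Upsilon_{\pm R}$ on its long side, would also work.

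\textbf{Step 3: the metric is not defined.} A limit in $t$ is not available, since the flows exist only for bounded times and the errors do not decay. The chain metric of an almost-metric can collapse: $\max(|x-y|-\Psi,0)$ on an interval has chain metric $0$. The paper instead takes $\tilde{\mathsf d}(x,y)$ to be the first $t$ with $\Fl^+_t(x)\ge y$ and $\Fl^+_t(y)\ge x$. From the expansion estimate it proves $\tilde{\mathsf d}(x,z)\le\tilde{\mathsf d}(x,y)+\tilde{\mathsf d}(y,z)+\theta$ with $\theta=\Psi$, and then adds $\theta$ off the diagonal.

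\textbf{Boundary layer.} This is avoided by running all flows in $H\supset I(\Upsilon_{-20},\Upsilon_{20})$ while the diamond is $J(\Upsilon_{-1},\Upsilon_1)$. Continuity of $\ell_+$ alone would give no modulus uniform in $(\mathcal M,\mathsf g)$.
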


\begin{Remark}[Independence of $\Psi$ on $\dim \mathcal M$]
Let us note here that, unlike in the Riemannian result by Cheeger--Colding \cite{CheegerColding96}, our error term $\Psi$ is independent of the spacetime dimension $n = \dim \mathcal M$. This is due to our assumption of sectional rather than Ricci curvature bounds, so our comparison-geometric estimates always deal with $2$- (rather than $n$-) dimensional model spaces.
\end{Remark}

\subsection{Expansion rate estimates, monotonicity formulas and reparametrized flows}
\label{Subsection: expansion estimates, monotonicity, reparam flows}

This and subsequent (sub)sections are dedicated to the proof of our main result. Notation will remain uniform throughout.

We fix a globally hyperbolic spacetime $(\mathcal M,\mathsf g)$ and $R = 1$ in the formulation of the main theorem (by scaling, this yields the claim for general $R$) and the timelike lower sectional curvature bound ${-}\delta < 0$ (for simplicity; in order not to write two sets of formulas) and $L \gg 20$ large enough. In addition, let $\delta L^4$ be small\footnote{This does not restrict the generality of our result. This is because the error function $\Psi(\delta,L \, | R)$ can be chosen to be monotonically {in}creasing in $\delta$ and $L^{-1}$, thus if we show our result under the scaling assumption $\delta L^4 \to 0$, it follows that in fact $\Psi \to 0$ regardless of how $\delta \to 0$ and $L \to \infty$.}, and fix a maximizing timelike geodesic $\Upsilon:[-L,L] \to \mathcal M$ parametrized by arclength. {We would like to study the gentlest ascent curves of} 
\begin{equation}\label{F and G}
F(x):=\ell(\Upsilon_{-L},x) \mbox{\rm\ and}\  G(x):=\ell(x,\Upsilon_L).
\end{equation}
However, it is $x \mapsto \mathrm{md}^{-\delta}(F(x))$ and $x \mapsto \mathrm{md}^{-\delta}(G(x))$ 
which Theorem \ref{Theorem: convexityoftimeseparation} asserts to be causally $\lambda$-convex on 
\begin{equation}
\label{eq: H}
H:=\{F > L - 20\} \cap \{G > L -20\} \subseteq I(\Upsilon_{-L},\Upsilon_L) \, ,
\end{equation}
where $\lambda =\cosh(\sqrt{\delta}(L-20)) \geq 0$. {Starting from each $x \in H$,
Theorem \ref{Theorem: local existence} therefore provides} a locally Lipschitz gradient flow curve $\gamma:[0,T) \to H$ of $\mathrm{md}^{-\delta} \circ F$, with {$\gamma_0=x$ and} $\gamma_T \in \partial H$. {Heuristically,}
$\bar \gamma_t:=\gamma_{g(t)}$ {satisfies $
\bar \gamma_t' = \nabla F(\bar \gamma_t)$,} where $g$ is the unique solution of {the ordinary differential equation}
\begin{equation}
\label{eq: reparametrization}
    g'(t) = \frac{\sqrt{\delta}}{\sinh(\sqrt{\delta} F(\gamma_{g(t)}))} = \frac{1}{(\mathrm{md}^{-\delta})'(F(\gamma_{g(t)}))}, \quad g(0) = 0.
\end{equation}

Observe that this is an ODE with Lipschitz right hand side, thus there is a unique solution $g$ as claimed. Moreover, notice that $g' > 0$.

\begin{Proposition}[{Synchronous} expansion rate estimate for causally $\lambda$-convex gradient flows]
\label{Prop: first expansion rate estimate}
Let $C$ be open, connected and causally convex, $f:C \to \R$ a causally $\lambda$-convex $c$-steep function for some $c > 0$ and $\lambda \in \R$. Consider two gradient flow curves $\alpha,\beta:[0,T] \to C$ {of $f$} with $\alpha_0 \leq \beta_0$, and $[0,T]$ a subset of the intersection of their maximal domains. Then for all $t \in [0,T]$,
\begin{equation}\label{first}
    \ell(\alpha_t,\beta_t)^2 \geq e^{2\lambda t} \ell(\alpha_0,\beta_0)^2 \geq (1+ 2\lambda t) \, \ell(\alpha_0,\beta_0)^2.
\end{equation}
Moreover, if $\lambda \geq 0$, then
\begin{equation*}
    \ell(\alpha_t,\beta_t)^2 \geq (1+ 2\lambda t)\ell(\alpha_0,\beta_0)^2 \geq 0
\end{equation*}
for all $t \in [0,T]$. Thus, if $\lambda \geq 0$, then $\alpha_t \leq \beta_t$ for every $t \in [0,T]$.
\end{Proposition}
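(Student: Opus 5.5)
Set $\phi(t):=\ell(\alpha_t,\beta_t)$ and aim for a differential inequality of the form $\frac{d}{dt}\phi^2 \geq 2\lambda \phi^2$ wherever $\phi(t)>0$, then integrate it. This is the Lorentzian analogue of the EVI-type contraction estimate for $\lambda$-convex gradient flows, with the direction of the inequality reversed: the flows are gentlest-ascent curves, so timelike separation expands rather than contracts.

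\textbf{Step 1: the differential inequality.} Fix $t$ with $\alpha_t \ll \beta_t$, and let $\sigma:[0,1]\to C$ be a maximizing timelike geodesic from $\alpha_t$ to $\beta_t$. Global hyperbolicity gives existence of $\sigma$, and causal convexity of $C$ keeps $\sigma$ inside $C$. By the first variation formula for $\ell$ (the superdifferential of $\ell$ at $(\alpha_t,\beta_t)$ contains the covectors built from $\sigma'$), I will get for the upper Dini derivative
\[
\frac{d}{dt}\Big|^+ \tfrac12 \ell(\alpha_t,\beta_t)^2 \;\geq\; \mathsf g(\dot\beta_t,\sigma'_1) - \mathsf g(\dot\alpha_t,\sigma'_0),
\]
at least at a.e.\ $t$ where both curves are differentiable. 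Here $\sigma'$ is parametrized on $[0,1]$, so $\|\sigma'\| = \ell(\alpha_t,\beta_t)$. To get an inequality valid at every point, I will replace differentiability of $\ell$ by the reverse triangle inequality: compare $\ell(\alpha_{t+h},\beta_{t+h})$ with the length of a concatenation, or use the local smoothness of $\ell$ near the timelike geodesic when there are no conjugate points, together with a lower-barrier argument.

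Since $\dot\alpha_t^\flat\in\partial_\leq f(\alpha_t)$ and $\dot\beta_t^\flat\in\partial_\leq f(\beta_t)$ by Theorem~\ref{Theorem: local existence}, Lemma~\ref{Lemma: Monotonicitysupergradient} applied along $\sigma$ gives
\[
\dot\beta_t^\flat[\sigma'_1] - \dot\alpha_t^\flat[\sigma'_0] \;\geq\; \lambda\,\ell(\alpha_t,\beta_t)^2 .
\]
Combining the two displays yields $\frac{d}{dt}\phi^2\geq 2\lambda\phi^2$ for a.e.\ $t$ with $\phi(t)>0$.

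\textbf{Step 2: integration and the borderline case.} The function $\phi^2$ is continuous, since $\ell$ is continuous on globally hyperbolic spacetimes, and locally Lipschitz where positive. Gronwall's lemma applied to $e^{-2\lambda t}\phi^2$ then gives the first inequality in \eqref{first} on any interval where $\phi>0$.

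It remains to show that $\phi$ never drops to zero once it is positive, so the interval is all of $[0,T]$. If $\phi(t_1)=0$ with $\phi>0$ on $[0,t_1)$, the Gronwall bound forces $\phi(t_1)\geq e^{\lambda t_1}\phi(0)>0$, a contradiction. If instead $\alpha_0\leq\beta_0$ with $\ell(\alpha_0,\beta_0)=0$, the estimate is trivial because the right-hand side vanishes, except for the causality claim $\alpha_t\leq\beta_t$.

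The causality claim in the null case comes from the stability part of Theorem~\ref{Theorem: local existence}. Choose $y_n\downarrow \beta_0$ with $\alpha_0\ll y_n$, for instance points slightly to the future of $\beta_0$ along a timelike curve. The flows $\beta^n$ from $y_n$ satisfy $\alpha_t\ll\beta^n_t$ for all $t$ by the previous case, and $\beta^n\to\beta$ locally uniformly. Closedness of $\leq$ in a globally hyperbolic spacetime then gives $\alpha_t\leq\beta_t$.

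\textbf{Step 3: the elementary inequalities.} The bound $e^{2\lambda t}\geq 1+2\lambda t$ holds for every real $\lambda t$. When $\lambda\ge0$ we also have $1+2\lambda t\geq 0$, which gives the nonnegativity statement and the conclusion $\alpha_t\le\beta_t$.

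\textbf{Main obstacle.} The hard part will be Step 1: justifying the first-variation lower bound on the derivative of $\ell(\alpha_t,\beta_t)$ for merely Lipschitz causal curves, where $\ell$ is not smooth because of cut points. My first attempt will use the superdifferentiability of $\ell(\cdot,\cdot)$ at timelike-related pairs. That property follows from semiconcavity: the reverse triangle inequality lets one use the smooth function $(x,y)\mapsto \ell(x,\sigma_{s})+\ell(\sigma_{s'},y)+\ell(\sigma_s,\sigma_{s'})$ as a lower barrier that touches $\ell$ at $(\alpha_t,\beta_t)$. Differentiating this barrier along $(\alpha,\beta)$ gives exactly the displayed inequality.
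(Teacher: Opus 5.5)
Your proposal is correct and is essentially the paper's proof: a lower barrier for $\ell$ from the reverse triangle inequality through interior points of the maximizing geodesic (the paper uses just the midpoint $m^{(t)}$), the first variation formula, Lemma~\ref{Lemma: Monotonicitysupergradient}, integration plus a continuity/connectedness argument on the set where $\alpha_t \ll \beta_t$, and flow stability for the case $\alpha_0 \leq \beta_0$. One small terminological slip: a smooth function touching $\ell$ from below gives a sub- (not super-) differential, i.e.\ it reflects local semiconvexity rather than semiconcavity of $\ell$, though the inequality you extract from it has the correct direction.
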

\begin{proof}
    We will show the claim assuming that $\alpha_0 \ll \beta_0$, the case $\alpha_0 \leq \beta_0$ then follows from the stability of the gradient flow. Since $\alpha,\beta$ are (locally Lipschitz) continuous and $\alpha_0 \ll \beta_0$, there exists $t^* > 0$ such that $\alpha_t \ll \beta_t$ for all $t \in [0,t^*)$. For each $t \in [0,t^*)$, let $\sigma^{(t)}:[0,1] \to C$ be a timelike maximizing geodesic connecting $\alpha_t = \sigma^{(t)}_0$ to $\beta_t:=\sigma^{(t)}_1$. Denote by $m^{(t)}:=\sigma^{(t)}_{1/2}$ the midpoint of $\sigma^{(t)}$ and note that $m^{(t)}$ is neither in the future causal cut locus of $\alpha_t$ nor in the past causal cut locus of $\beta_t$. By construction, $\alpha_t \ll m^{(t)} \ll \beta_t$ and
    \begin{align}\label{eq: equalitymidpoint}
        \ell(\alpha_t,\beta_t) = \ell(\alpha_t,m^{(t)}) + \ell(m^{(t)},\beta_t).
    \end{align}
    For each $t \in [0,t^*)$, by openness of $\ll$, we may find $\eta' > 0$ such that $t + \eta' < t^*$ and $\alpha_{t+\eta} \ll m^{(t)} \ll \beta_{t+\eta}$ for all $0 < \eta \leq \eta'$, and the reverse triangle inequality yields
    \begin{align}\label{eq: inequalitymidpoint}
    \ell(\alpha_{t+\eta},\beta_{t+\eta}) \geq \ell(\alpha_{t+\eta},m^{(t)}) + \ell(m^{(t)}, \beta_{t+\eta}).
    \end{align}
    Subtracting \eqref{eq: equalitymidpoint} from \eqref{eq: inequalitymidpoint} and dividing by $\eta$ yields
    \begin{align}\label{eq: midpointdifferencequotient}
        \frac{\ell(\alpha_{t+\eta},\beta_{t+\eta}) - \ell(\alpha_t,\beta_t)}{\eta} \geq \frac{\ell(\alpha_{t+\eta},m^{(t)}) - \ell(\alpha_t,m^{(t)})}{\eta} + \frac{\ell(m^{(t)},\beta_{t+\eta}) - \ell(m^{(t)},\beta_t)}{\eta}.
    \end{align}
    Since $m^{(t)}$ is neither a future cut point of $\alpha_t$ nor a past cut point of $\beta_t$, the functions $s \mapsto \ell(\alpha_s,m^{(t)})$ and $s \mapsto \ell(m^{(t)},\beta_s)$ (defined for $s$ close to $t$) are differentiable at $s=t$. Thus, by taking the $\liminf$ as $\eta \downarrow 0$, the terms on the right-hand side of \eqref{eq: midpointdifferencequotient} become actual limits. In particular, for all $t$ at which $\alpha$, $\beta$, and $t \mapsto \ell(\alpha_t,\beta_t)$ are differentiable (by Lipschitz continuity, these are almost all $t$), {
 the first variation formula (cf. \cite[p.\ 264]{bookONeill}) yields}
    \begin{align}
        &\lim_{\eta \downarrow 0} \frac{\ell(\alpha_{t+\eta},m^{(t)}) - \ell(\alpha_t,m^{(t)})}{\eta} = -\frac{1}{\ell(\alpha_t,\beta_t)} \mathsf g((\sigma^{(t)})'(0),\alpha'_t),\\
        &\lim_{\eta \downarrow 0} \frac{\ell(m^{(t)},\beta_{t+\eta}) - \ell(m^{(t)},\beta_t)}{\eta} = \frac{1}{\ell(\alpha_t,\beta_t)} \mathsf g((\sigma^{(t)})'(1),\beta'_t).
    \end{align}
    Using this, together with the monotonicity property from Lemma \ref{Lemma: Monotonicitysupergradient}, noting that $(\alpha'_t)^{\flat} \in \partial_\leq f(\alpha_t)$ and $(\beta'_t)^{\flat} \in \partial_\leq f(\beta_t)$, we obtain for almost every
    $t \in [0,t^*)$ that
    \begin{align}
        \lim_{\eta \downarrow 0} \frac{\ell(\alpha_{t+\eta},\beta_{t+\eta})- \ell(\alpha_t,\beta_t)}{\eta} &\geq \frac{\mathsf g((\sigma^{(t)})'_1,\beta'_t)-\mathsf g((\sigma^{(t)})'_0,\alpha'_t)}
        {\ell(\alpha_t,\beta_t)} \\
        &= \frac{(\beta'_t)^{\flat}[(\sigma^{(t)})'_1] - (\alpha'_t)^{\flat}[(\sigma^{(t)})'_0]}{\ell(\alpha_t,\beta_t)}\\
        &\geq \lambda \ell(\alpha_t,\beta_t).
    \end{align}
    Integrating this inequality yields the claim for $t \in [0,t^*)$. Continuity yields the claim for $t = t^*$, then a connectedness argument yields it for all $t \in [0,T]$.
\end{proof}

\begin{Lemma}[Subgradient of the Lorentzian distance to a point]
\label{L:gradient estimate}
    \label{variation inequality} Let $(\mathcal M,\mathsf g)$ be a globally hyperbolic spacetime, let $x \ll y$ be any two points, and let $\sigma:[0,1] \to \mathcal M$ be a maximizing timelike geodesic from $x$ to $y$. 
    Let $\gamma:[0,T] \to \mathcal M$ be any future causal curve starting at $y$ which is right-differentiable at $t= 0$, then 
    \begin{equation}\label{gradient estimate}
    \liminf_{t \rightarrow 0^{+}} \frac{\ell(x, \gamma_t)- \ell(x,y)}{t} \geq \frac{ \mathsf g(\sigma'_1, \gamma_0')}{ \ell(x,y)}.
    \end{equation}
    In particular,   
    \begin{equation}
        \frac{d}{dt}\bigg|^+_{t = 0} \frac{1}{2}\ell(x,\gamma_t)^2 \geq \mathsf g(\gamma_0', \sigma'_1).
    \end{equation}
\end{Lemma}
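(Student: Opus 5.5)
The plan is to compare $\ell(x,\gamma_t)$ from below with the Lorentzian length of an explicit causal competitor curve from $x$ to $\gamma_t$. The competitor is built by deforming $\sigma$ near its endpoint, and the first variation formula computes its length to first order.

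First I will use that $\sigma$ is timelike and maximizing to fix a smooth variation $\sigma^{(t)}:[0,1]\to\mathcal M$ with $\sigma^{(t)}_0=x$, $\sigma^{(t)}_1=\gamma_t$ and $\sigma^{(0)}=\sigma$. Concretely, I choose a smooth vector field $V$ along $\sigma$ with $V(0)=0$ and $V(1)=\gamma_0'$, for instance $V(s)=s\,P_s$, where $P_s$ is the parallel transport of $\gamma_0'$ backwards along $\sigma$. I then set $\sigma^{(t)}_s:=\exp_{\sigma_s}(tV(s)+\text{correction})$. The correction has to be chosen so that the endpoint is exactly $\gamma_t$ rather than $\exp_y(t\gamma_0')$. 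Since $\gamma$ is only right-differentiable at $0$, I handle this with a two-step construction:
\begin{enumerate}
\item Take $\tilde\sigma^{(t)}$ ending at $\exp_y(t\gamma_0')$.
\item Note that $d(\exp_y(t\gamma_0'),\gamma_t)=o(t)$ for a Riemannian background metric $d$.
\item Using that $\sigma$ is timelike, pick a short smooth curve from a point slightly before the endpoint that absorbs the $o(t)$ discrepancy. This costs only $o(t)$ in length.
\end{enumerate}
A simpler route to the same end is to fix $s_0<1$ close to $1$ and run the variation on $[s_0,1]$ in a convex normal neighborhood of $y$. There, for small $t$, $\sigma_{s_0}\ll\gamma_t$, and the length of the unique local geodesic from $\sigma_{s_0}$ to $\gamma_t$ is a smooth function $\tau(p)$ of the endpoint $p$ near $y$.

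Second, I compute the derivative. With $\tau(p)=$ the local time separation from $\sigma_{s_0}$ to $p$, which is smooth near $y$, the first variation formula gives $d\tau_y(w)=\mathsf g(\dot\sigma/|\dot\sigma|,w)$ evaluated at $y$, namely $\mathsf g(\sigma'_1,w)/\ell(x,y)$, since $|\sigma'|\equiv\ell(x,y)$ for the $[0,1]$ parametrization. The reverse triangle inequality then yields
\[
\ell(x,\gamma_t)\ \ge\ \ell(x,\sigma_{s_0})+\tau(\gamma_t),
\qquad\text{with equality at } t=0 .
\]
Because $\tau$ is smooth and $\gamma$ is right-differentiable at $0$, $\tau(\gamma_t)=\tau(y)+t\,d\tau_y(\gamma_0')+o(t)$. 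Subtracting the equality case at $t=0$, dividing by $t$ and taking $\liminf_{t\to0^+}$ gives \eqref{gradient estimate}.

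Third, the "in particular" statement follows from the chain rule for Dini derivatives. Since $\ell(x,\gamma_t)\to\ell(x,y)>0$ by continuity of $\ell$ in a globally hyperbolic spacetime, the right lower Dini derivative of $\tfrac12\ell(x,\gamma_t)^2$ is $\ell(x,y)$ times that of $\ell(x,\gamma_t)$, and the factor $\ell(x,y)$ cancels.

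The main obstacle is justifying that $\tau$ is smooth near $y$ and is a valid lower bound. This needs $\sigma_{s_0}$ and $y$ in a common convex normal neighborhood, so that the local geodesic is timelike and its length is smooth in $p$. It is settled by taking $s_0$ close to $1$. Beyond that step, no cut-locus issue arises, because only the lower bound is needed.
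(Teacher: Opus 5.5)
Your argument is correct and is essentially the paper's. Both proofs insert an intermediate point $\sigma_{s_0}$ on $\sigma$ near $y$, apply the reverse triangle inequality, and differentiate the distance from $\sigma_{s_0}$ at $y$ via the first variation formula. The one difference is that you use the local time separation in a convex normal neighbourhood as a smooth lower barrier, whereas the paper argues differentiability of the global $\ell(\sigma_s,\cdot)$ at $y$ via the cut locus; your version avoids the paper's case split and its limit $s\to 1^-$, and makes your opening variation construction unnecessary.
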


\begin{proof}
    If $x$ does not lie in the past causal cut locus of $y$, then this result holds with an equality instead of an inequality by the first variation formula, see \cite[p 264]{bookONeill}.
    If $x$ is in the past cut locus of $y$ then we have that $\sigma_s$ does not lie in the future cut locus of $x$ for $s \in [0,1)$. Hence we use reversed triangle inequality
    \[
    \ell(x, \gamma_t) \geq \ell( x, \sigma _s) + \ell(\sigma _s, \gamma_t).
    \]
    Now subtract on both sides $\ell(x, y)$ and use the fact that $\sigma$ is a maximizing geodesic to obtain
    \[
    \frac{\ell(x, \gamma_t) - \ell(x, y)}{t} \geq \frac{\ell(x, \sigma_s) + \ell(\sigma_s, \gamma_t) - \ell(x, y)}{t} = \frac{\ell(\sigma_s, \gamma_t) - \ell(\sigma_s, y) }{t}.
    \]
    Take the $\liminf$ in $t \rightarrow 0^+$, notice that the right-hand side is differentiable since $y$ is not in the future cut locus of $\sigma_s$ and $\gamma_t$ is right differentiable at $t= 0$, and {the first variation formula yields}    
    \begin{equation*}
    \liminf_{t \rightarrow 0^+} {\frac{\ell(x, \gamma_t) - \ell(x, y)}{t}} \geq  \frac{\mathsf g(-\exp_y^{-1} (\sigma_s),\gamma'_0)}{\mathsf g(-\exp_y^{-1} (\sigma_s), -\exp_y^{-1}(\sigma_s))^{1/2}}.
    \end{equation*}
    Letting $s \rightarrow 1^-$ on the right-hand side, the conclusion follows. The claim about $\ell(x,\gamma_t)^2$ easily follows from the above together with the chain rule.
\end{proof}

\begin{Proposition}[{Down}stream expansion rate estimate for $\mathrm{md}^{-\delta} \circ F$]
\label{Prop: Second mainstream expansion}
Let $(\mathcal M,\mathsf g)$ be globally hyperbolic with timelike sectional curvature bounded below by $-\delta \leq 0$, and let $\Upsilon:[-L,L] \to \mathcal M$ be a maximizing timelike geodesic parametrized by proper time. Let $F(x):=\ell(\Upsilon_{-L},x)$ and let $\gamma^1,\gamma^2:[0,T] \to H$ be two gradient flow curves of $\mathrm{md}^{-\delta} \circ F$. Suppose that $\gamma^1_0 \leq \gamma^2_0$. Then for every $t \in [0,T]$,
\begin{equation}
\begin{split}
    \ell(\gamma_0^1,\gamma_t^2)^2 & \geq  \ell(\gamma_0^1,\gamma^2_0)^{2} + 2\int_0^t \int_0^s (\mathrm{md}^{-\delta})'(F(\gamma^2_r))^2 \, dr \, ds \\ & \qquad \qquad +2t(\mathrm{md}^{-\delta}(F(\gamma^2_0)) - \mathrm{md}^{-\delta}(F(\gamma^1_0))) + \lambda \int_0^t \ell(\gamma^1_0,\gamma^2_s)^{2} \, ds.
    \end{split}
\end{equation}
\end{Proposition}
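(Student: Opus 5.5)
The plan is to study the scalar function $\phi(s):=\tfrac12\ell(p,q_s)^2$, where $p:=\gamma^1_0$ and $q_s:=\gamma^2_s$. I will derive a first-order differential inequality for $\phi$ and then integrate it twice. Throughout write $f:=\mathrm{md}^{-\delta}\circ F$; by Theorem \ref{Theorem: convexityoftimeseparation} it is causally $\lambda$-convex on $H$. Since $p=\gamma^2_0$'s comparison point satisfies $p\le q_0\le q_s$ (the flow is a future causal curve), $\phi$ is well defined and nonnegative. It is locally Lipschitz in $s$, because $\ell(p,\cdot)$ is locally Lipschitz on $I^+(p)$ and $q$ is locally Lipschitz. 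Hence it suffices to bound $\phi'$ at a.e.\ $s$.

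\emph{Step 1 (first-order inequality).} Fix $s$ with $p\ll q_s$ at which $q$ is differentiable with $(q_s')^\flat\in\partial_\le f(q_s)$. Let $\sigma:[0,1]\to\mathcal M$ be a maximizing timelike geodesic from $p$ to $q_s$.
\begin{itemize}
\item Lemma \ref{L:gradient estimate} gives $\frac{d}{ds}\big|^+\phi(s)\ge \mathsf g(q_s',\sigma'_1)=(q_s')^\flat[\sigma_1']$.
\item Arguing exactly as in the second half of the proof of Lemma \ref{Lemma: Monotonicitysupergradient}, the subgradient $w=(q_s')^\flat$ satisfies $w[\sigma_1']\ge f(q_s)-f(p)+\tfrac{\lambda}{2}\ell(p,q_s)^2$. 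To see this, compare $f$ along the past reparametrization of $\sigma$ using the subdifferential inequality, then apply the $\lambda$-convexity of $f$ along $\sigma$ to the difference quotient $\big(f(q_s)-f(\sigma_{1-h})\big)/h$.
\item One caveat: $\sigma$ must lie in the convex set where $f$ is $\lambda$-convex. If this fails, I use instead the global convexity statement of Theorem \ref{Theorem: Toponogov convexity version}, which applies to geodesics in $I^+(\Upsilon_{-L})$, with constant $1+\delta f\ge\lambda$ on the relevant range.
\end{itemize}
Combining these yields
\begin{equation*}
\phi'(s)\ \ge\ f(q_s)-f(p)+\lambda\,\phi(s)\qquad\text{for a.e. } s.
\end{equation*}
If instead $\ell(p,q_s)=0$ on an initial interval, I handle this by approximation: replace $p$ by points $p_n\uparrow p$ with $p_n\ll q_0$, and use continuity of $\ell$ in global hyperbolicity.

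\emph{Step 2 (energy identity along the flow).} By Theorem \ref{Theorem: local existence}\,ii), $\frac{d}{dr}f(q_r)=\|q_r'\|^2_{\mathsf g}$, and this equals the square of the slope $\inf_{y\gg q_r}\frac{f(y)-f(q_r)}{\ell(q_r,y)}$. Since $F$ is $1$-steep by the reverse triangle inequality ($F(y)-F(x)\ge \ell(x,y)$), and $\mathrm{md}^{-\delta}$ is increasing and convex, this slope is at least $(\mathrm{md}^{-\delta})'(F(q_r))$. Therefore
\begin{equation*}
f(q_s)\ \ge\ f(q_0)+\int_0^s(\mathrm{md}^{-\delta})'(F(q_r))^2\,dr.
\end{equation*}

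\emph{Step 3 (integration).} Insert Step 2 into Step 1, integrate from $0$ to $t$, and multiply by $2$. This gives precisely
\begin{equation*}
\ell(p,q_t)^2\ \ge\ \ell(p,q_0)^2+2\int_0^t\!\!\int_0^s(\mathrm{md}^{-\delta})'(F(q_r))^2\,dr\,ds+2t\big(f(q_0)-f(p)\big)+\lambda\int_0^t\ell(p,q_s)^2\,ds.
\end{equation*}

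I expect the main obstacle to be Step 1, specifically the inequality $w[\sigma_1']\ge f(q_s)-f(p)+\tfrac{\lambda}{2}\ell^2$. The difficulty is that the geodesic $\sigma$ from $p$ to $q_s$ must lie where $\lambda$-convexity holds, and that the subdifferential inequality must be used in the past direction $-\sigma_1'$. A secondary technical point is the degenerate case $\ell(p,q_s)=0$, which is treated by the approximation described in Step 1.
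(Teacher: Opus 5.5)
Your proposal is correct and follows essentially the same route as the paper. Testing the causal subdifferential in the past direction $-\sigma_1'$ and using $\lambda$-convexity, together with Lemma \ref{L:gradient estimate}, bounds the right Dini derivative of $s\mapsto\tfrac12\ell(\gamma^1_0,\gamma^2_s)^2$ from below by $f(\gamma^2_s)-f(\gamma^1_0)+\tfrac{\lambda}{2}\ell(\gamma^1_0,\gamma^2_s)^2$; $1$-steepness of $F$ then gives $f(\gamma^2_s)-f(\gamma^2_0)\ge\int_0^s(\mathrm{md}^{-\delta})'(F(\gamma^2_r))^2\,dr$, and one integrates.

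Two minor remarks. First, your caveat about $\sigma$ leaving the convexity region is vacuous: $H=\{F>L-20\}\cap\{G>L-20\}$ is the intersection of a future set and a past set, hence causally convex. Second, your approximation by $p_n\ll p$ is a slightly more direct substitute for the paper's appeal to gradient-flow stability, since $\gamma^1$ enters the estimate only through its initial point.
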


\begin{proof}
    Again we may suppose that $\gamma_0^1 \ll \gamma_0^2$. The case $\gamma_0^1 \leq \gamma_0^2$ follows from gradient flow stability. 
    Fix $f(x):=\mathrm{md}^{-\delta}(F(x))$. Since $r \mapsto f(\gamma^2_r)$ is increasing, then
    \begin{equation*}
        f(\gamma_s^2) - f(\gamma^2_0) \geq \int_0^s \frac{d}{dr}  f(\gamma^2_r) \, dr.
    \end{equation*}
    By properties of the gradient flow, the fact that $F$ is $1$-steep, and since $f(\gamma^2_r)$ is differentiable a.e., it follows for a.e. $r$ that
    \begin{equation*}
        \frac{d}{dr} f(\gamma_r^2) = (\mathrm{md}^{-\delta})'(F(\gamma^2_r)) \frac{d}{dr} \, F(\gamma^2_r) \geq (\mathrm{md}^{-\delta})'(F(\gamma^2_r)) \|\dot \gamma^2_r\| \geq ((\mathrm{md}^{-\delta})'(F(\gamma^2_r))^2.
    \end{equation*}
    This gives the estimate
    \begin{equation}
    \label{eq: fgamma^2_1 - fgamma^2_0 estimate}
        f(\gamma_s^2) - f(\gamma^2_0) \geq \int_0^s ((\mathrm{md}^{-\delta})'(F(\gamma^2_r))^2 \, dr. 
    \end{equation}
    {Now} connect $\gamma_0^1$ to $\gamma^2_s$ by a maximizing timelike geodesic $\sigma^{(s)}:[0,1] \to H$. Using the $\lambda$-convexity of $f$ along $\sigma^{(s)}$ and the fact that $(\dot \gamma_s^2)^{\flat} \in \partial_\leq f ( \gamma^2_s)$, we have that
    \begin{equation}
        (\dot \gamma_s^2)^{\flat} [\dot \sigma^{(s)}_1] \geq \lim_{\varepsilon \rightarrow 0} \frac{f( \sigma^{(s)}_{1})- f(\sigma^{(s)}_{1- \varepsilon} )}{\varepsilon} \geq f(\sigma_1^{(s)}) - f(\sigma_0^{(s)} ) + \frac{\lambda}{2} \ell(\gamma_0^1, \gamma_s^2)^2 ,
    \end{equation}
    and so
    \begin{equation}\label{EVI 2a}
        g( \dot \gamma_s^2 , \dot \sigma^{(s)}_1 ) \geq f(\gamma_s^{2} ) - f(\gamma_0^1) + \frac{\lambda}{2} \ell(\gamma_0^1, \gamma_s^2)^2
    \end{equation}
    Now, using Lemma \ref{variation inequality}, we obtain for a.e. $s \in [0,T]$ that
    \begin{equation} \label{eq: EVI 2}
        \dfrac{d}{dt}\bigg{|}^+_{t=s} \frac{1}{2} \ell(\gamma^1_0, \gamma_t^2)^2 \geq \mathsf g( \dot \gamma_s^2 , \dot \sigma^{(s)}_1 )  .
    \end{equation}
    Next we can {combine \eqref{EVI 2a}}, \eqref{eq: EVI 2}, and \eqref{eq: fgamma^2_1 - fgamma^2_0 estimate} to get
    \begin{equation}
        \dfrac{d}{dt}\bigg{|}^+_{t=s} \frac{1}{2} \ell(\gamma^1_0, \gamma_t^2)^2 \geq f(\gamma_0^2) - f(\gamma_0^1) +\int_0^s ((\mathrm{md}^{-\delta})'(F(\gamma^2_r))^2 \, dr + \frac{\lambda}{2} \ell(\gamma_0^1, \gamma_s^2)^2 .\label{eq3.20}
    \end{equation}
    The conclusion follows by integrating \eqref{eq3.20} on $s \in [0,t]$ and noticing that 
    \[
    \frac{1}{2} \ell(\gamma_0^1, \gamma_t^2)^2 - \frac{1}{2} \ell(\gamma_0^1, \gamma_0^2)^2 \geq \int_0^t \dfrac{d}{d\tau}\bigg{|}^+_{\tau=s} \frac{1}{2} \ell(\gamma^1_0, \gamma_{\tau}^2)^2 ds
    \]
    holds by monotonicity of $s \mapsto \frac{1}{2} \ell(\gamma_0^1, \gamma_s^2)^2$.    
\end{proof}

\begin{Proposition}[Upstream expansion rate estimate for $\mathrm{md}^{-\delta} \circ F$]
\label{Prop: Second upstream expansion}
In the setting of Proposition \ref{Prop: Second mainstream expansion}, let $\gamma^1,\gamma^2:[0,T] \to H$ be two gradient flow curves of $\mathrm{md}^{-\delta} \circ F$,
and suppose for some $t \in [0,T]$ we have that $\gamma^1_t \leq \gamma^2_0$. Then
\begin{equation}
\begin{split}
    \ell(\gamma_t^1,\gamma_0^2)^2 & \geq \ell(\gamma_0^1,\gamma_0^2)^2 + 2 \int_0^t \int_0^s (\mathrm{md}^{-\delta})'(F(\gamma^1_r))^2 \, dr ds \\
    & \qquad \qquad {-} 2t(\mathrm{md}^{-\delta}(F(\gamma_0^2)) - \mathrm{md}^{-\delta}(F(\gamma_0^1)))
    + \lambda \int_0^t \ell(\gamma^1_s,\gamma_0^2)^2 \, ds.
\end{split}
\end{equation}
\end{Proposition}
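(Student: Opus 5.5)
The plan mirrors the proof of Proposition \ref{Prop: Second mainstream expansion}, with the roles of the endpoints swapped. The point $y:=\gamma^2_0$ stays fixed and the past endpoint $\gamma^1_s$ moves along the flow. Set $f:=\mathrm{md}^{-\delta}\circ F$ and $h(s):=\frac12\ell(\gamma^1_s,y)^2$.

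First I reduce to the case $\gamma^1_t\ll y$. The general case $\gamma^1_t\le y$ should then follow by approximation. I would either use the stability part of Theorem \ref{Theorem: local existence}, or move $y$ slightly to the future along $\gamma^2$ and use continuity of $\ell$ on the compact set involved. Since $\gamma^1$ is future causal, $\gamma^1_s\le\gamma^1_t\ll y$ for all $s\in[0,t]$, so $h$ is defined on $[0,t]$ with $\gamma^1_s\ll y$ throughout.

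\textbf{Step 1: variational inequality at the past endpoint.} For each $s$, let $\sigma^{(s)}:[0,1]\to H$ be a maximizing timelike geodesic from $\gamma^1_s$ to $y$. I would prove the time-reversed analogue of Lemma \ref{L:gradient estimate}:
\[
\frac{d}{d\tau}\Big|^+_{\tau=s} h(\tau)\ \ge\ -\mathsf g\bigl(\dot\gamma^1_s,\dot\sigma^{(s)}_0\bigr)
\]
for a.e.\ $s$. If $y$ is not a future cut point of $\gamma^1_s$, this is equality by the first variation formula. Otherwise, I use the reverse triangle inequality through $\sigma^{(s)}_r$ with $r\in(0,1)$ (not a cut point), apply the first variation there, and let $r\downarrow0$. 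This is exactly as in the proof of Lemma \ref{L:gradient estimate}.

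\textbf{Step 2: subgradient and convexity.} Since $(\dot\gamma^1_s)^\flat\in\partial_\le f(\gamma^1_s)$, I combine it with the $\lambda$-convexity of $f$ along $\sigma^{(s)}$ from Theorem \ref{Theorem: convexityoftimeseparation}. As in the first half of the proof of Lemma \ref{Lemma: Monotonicitysupergradient}, this gives
\[
\mathsf g\bigl(\dot\gamma^1_s,\dot\sigma^{(s)}_0\bigr)\ \le\ f(y)-f(\gamma^1_s)-\frac{\lambda}{2}\ell(\gamma^1_s,y)^2 .
\]

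\textbf{Step 3: growth of $f$ along the flow.} I reuse \eqref{eq: fgamma^2_1 - fgamma^2_0 estimate} with $\gamma^1$ in place of $\gamma^2$. This uses $1$-steepness of $F$ and property ii) of the gradient flow, and gives
\[
f(\gamma^1_s)\ \ge\ f(\gamma^1_0)+\int_0^s(\mathrm{md}^{-\delta})'(F(\gamma^1_r))^2\,dr .
\]
Combining Steps 1--3 yields
\[
\frac{d}{d\tau}\Big|^+_{\tau=s}h\ \ge\ -\bigl(f(y)-f(\gamma^1_0)\bigr)+\int_0^s(\mathrm{md}^{-\delta})'(F(\gamma^1_r))^2\,dr+\frac{\lambda}{2}\ell(\gamma^1_s,y)^2 .
\]
Integrating over $s\in[0,t]$ and multiplying by $2$ gives exactly the stated inequality.

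\textbf{Main obstacle: the integration step.} In the downstream case, $h$ was nondecreasing, so $h(t)-h(0)\ge\int h'$ was automatic. Here $h$ is nonincreasing, since the past endpoint moves to the future, and monotonicity gives the wrong direction. I would instead use that on $\{\gamma^1_s: s\in[0,t]\}$, which is compact and contained in $I^-(y)$, the function $\ell(\cdot,y)$ is locally Lipschitz. This holds by global hyperbolicity, via the semiconcavity/local Lipschitz property of time separations away from the diagonal in the timelike region. Since $\gamma^1$ is locally Lipschitz, $h$ is then absolutely continuous on $[0,t]$. Hence $h(t)-h(0)=\int_0^t h'$, and the a.e.\ derivative coincides with the right Dini derivative, which closes the argument.
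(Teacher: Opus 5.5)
Your proposal is correct and follows the paper's route: the paper simply declares the argument entirely analogous to the downstream Proposition, after reducing to $t>0$. In that case steepness makes $\gamma^1$ timelike, so $\gamma^1_s\ll\gamma^2_0$ for all $s<t$, which replaces your perturbation of $\gamma^2_0$. You are also right that the monotonicity trick used to integrate the Dini derivative downstream goes the wrong way here; replacing it by local Lipschitz continuity of $\ell(\cdot,\gamma^2_0)$ on $I^-(\gamma^2_0)$, as the paper itself does in Step 2 of the almost-Pythagoras proof, is exactly the right fix.
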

\begin{proof}
    The proof is entirely analogous to that of Proposition \ref{Prop: Second mainstream expansion}, upon noticing that $t > 0$ can be assumed without loss of generality (the claim holds trivially for $t = 0$).
\end{proof}

\begin{Proposition}[{Asynchronous} expansion rate estimate for $\mathrm{md}^{-\delta} \circ F$]
\label{Prop: Final second expansion}
In the setting of Proposition \ref{Prop: Second mainstream expansion}, suppose $\gamma^1,\gamma^2:[0,T] \to H$ are two gradient flows with $\gamma_0^1 \leq \gamma_0^2$. 
For all $s,t \in [0,T]$. If $s\le t$ then
\begin{equation}\label{final 2a}
    \begin{split}\ell(\gamma^1_s,\gamma^2_t)^2 &\geq e^{2\lambda s} \bigg(\ell(\gamma_0^1,\gamma_0^2)^2 + 2 \int_0^{t-s} \int_0^r (\mathrm{md}^{-\delta})'(F(\gamma^2_\xi))^2 \, d\xi \, dr  \\
        &\qquad   +2(t-s) (\mathrm{md}^{-\delta}(F(\gamma_0^2)) - \mathrm{md}^{-\delta}(F(\gamma_0^1))) + \lambda \int_0^{t-s} \ell(\gamma_0^1,\gamma_\xi^2)^2 \, d\xi\bigg) .
    \end{split}
\end{equation}
If instead $s\geq t$ but $\gamma_{s-t}^1 \leq \gamma_0^2$ then
\begin{equation}\label{final 2b}
    \begin{split}
        \ell(\gamma^1_s,\gamma^2_t)^2 &\geq e^{2\lambda t} \bigg(\ell(\gamma_0^1,\gamma_0^2)^2 + 2 \int_0^{s-t} \int_0^r (\mathrm{md}^{-\delta})'(F(\gamma^1_\xi))^2 \, d\xi \, dr  \\
        &\qquad   +2(s-t) (\mathrm{md}^{-\delta}(F(\gamma_0^1)) - \mathrm{md}^{-\delta}(F(\gamma_0^2))) + \lambda \int_0^{s-t} \ell(\gamma_\xi^1,\gamma_0^2)^2 \, d\xi\bigg) .
    \end{split}
\end{equation}
\end{Proposition}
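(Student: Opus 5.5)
The plan is to combine the two kinds of estimates already available: first shift one curve by the time difference using the downstream/upstream estimates, then run both curves synchronously using Proposition \ref{Prop: first expansion rate estimate}. The key observation is that the time-shifted curves are again gradient flow curves of $f:=\mathrm{md}^{-\delta}\circ F$. This follows from uniqueness in Theorem \ref{Theorem: local existence}, since the flow equation is autonomous.

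\emph{Case $s\le t$.} Put $\tilde\gamma^2_r:=\gamma^2_{r+(t-s)}$ for $r\in[0,s]$. Since $t\le T$, this is a gradient flow curve of $f$ on $[0,s]$ with $\tilde\gamma^2_0=\gamma^2_{t-s}$.

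\begin{itemize}
\item[(a)] Apply Proposition \ref{Prop: Second mainstream expansion} to $\gamma^1,\gamma^2$ on $[0,t-s]$, using $\gamma^1_0\le\gamma^2_0$. This bounds $\ell(\gamma^1_0,\gamma^2_{t-s})^2$ from below by exactly the bracket in \eqref{final 2a}.
\item[(b)] We need $\gamma^1_0\le\gamma^2_{t-s}$. This holds because $\gamma^2$ is future causal (the flow of a causal function), so $\gamma^2_0\le\gamma^2_{t-s}$, and transitivity gives the claim. Hence the pair $(\gamma^1,\tilde\gamma^2)$ on $[0,s]$ satisfies the hypotheses of Proposition \ref{Prop: first expansion rate estimate}, with $\lambda\ge0$ on $H$.
\item[(c)] Proposition \ref{Prop: first expansion rate estimate} then gives $\ell(\gamma^1_s,\gamma^2_t)^2=\ell(\gamma^1_s,\tilde\gamma^2_s)^2\ge e^{2\lambda s}\ell(\gamma^1_0,\gamma^2_{t-s})^2$.
\end{itemize}

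Inserting (a) into (c) yields \eqref{final 2a}. This works whether the bracket is nonnegative or not: the left side of (a) is a square, so we use the larger of the two bounds, and the monotone factor $e^{2\lambda s}$ preserves the inequality.

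\emph{Case $s\ge t$.} This is symmetric. Set $\tilde\gamma^1_r:=\gamma^1_{r+(s-t)}$ on $[0,t]$.

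\begin{itemize}
\item[(a)] By hypothesis $\gamma^1_{s-t}\le\gamma^2_0$, so Proposition \ref{Prop: Second upstream expansion} applies with time parameter $s-t$. It bounds $\ell(\gamma^1_{s-t},\gamma^2_0)^2$ below by the bracket of \eqref{final 2b}, with the sign of the $\mathrm{md}^{-\delta}$-difference term flipped as displayed.
\item[(b)] Since $\tilde\gamma^1_0=\gamma^1_{s-t}\le\gamma^2_0$, Proposition \ref{Prop: first expansion rate estimate} applies to $(\tilde\gamma^1,\gamma^2)$ on $[0,t]$. It gives $\ell(\gamma^1_s,\gamma^2_t)^2\ge e^{2\lambda t}\ell(\gamma^1_{s-t},\gamma^2_0)^2$.
\end{itemize}

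Combining (a) and (b) gives \eqref{final 2b}.

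The main points needing care are the following.

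\begin{itemize}
\item The time-translation invariance of the flow. The translate stays in $H$ and satisfies conditions (i)–(ii) of Theorem \ref{Theorem: local existence}, hence by uniqueness it is the flow from its initial point.
\item Applying Proposition \ref{Prop: first expansion rate estimate} requires a $c$-steep causally convex function on an open, connected, causally convex set. On $H$, $f$ is steep because $F$ is $1$-steep and $(\mathrm{md}^{-\delta})'(F)\ge (\mathrm{md}^{-\delta})'(L-20)>0$.
\item The statement is a lower bound on a squared time separation that may formally be negative. This causes no difficulty, because all the causal relations needed for the chained estimates were established in (b) and (a) above.
\end{itemize}
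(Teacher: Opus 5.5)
Your proposal is correct and follows the paper's proof: for $s\le t$ it applies the synchronous estimate of Proposition \ref{Prop: first expansion rate estimate} to the pair based at $\gamma^1_0\le\gamma^2_{t-s}$ and then bounds $\ell(\gamma^1_0,\gamma^2_{t-s})^2$ from below via Proposition \ref{Prop: Second mainstream expansion}. The case $s\ge t$ is handled analogously with Proposition \ref{Prop: Second upstream expansion}. Your remarks on time-translation invariance of the flow, steepness of $\mathrm{md}^{-\delta}\circ F$ on $H$, and a possibly negative bracket make explicit what the paper uses implicitly.
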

\begin{proof}
    If $t\geq s$ we get that $\gamma_0^1 \leq \gamma_0^2 \leq \gamma_{t-s}^2$, so the {synchronous} expansion rate estimate \eqref{first} based at the points $\gamma_0^1 \leq \gamma_{t-s}^2$  gives 
    \[
    \ell( \gamma_s^1, \gamma_t^2)^2 \geq e^{2\lambda s} \ell (\gamma_0^1, \gamma_{t-s}^2)^2 .
    \]
    Now \eqref{final 2a} follows by estimating  $\ell(\gamma_0^1,\gamma_{t-s}^2)^2$
    from below using Proposition \ref{Prop: Second mainstream expansion}. The second claim \eqref{final 2b} is argued analogously, using the {synchronous} expansion rate estimate in conjunction with Proposition \ref{Prop: Second upstream expansion}.
\end{proof}

\begin{Corollary}[{Simplified asynchronous} expansion rate estimates]
\label{Cor: Compact rewriting of second expansion estimates}
Let $(\mathcal M,\mathsf g)$ globally hyperbolic with timelike sectional curvature bounded below by $-\delta < 0$, $\Upsilon:[-L,L] \to \mathcal M$ a timelike maximizing geodesic parametrized by proper time. Let $F(x):=\ell(\Upsilon_{-L},x)$, and $\gamma^1,\gamma^2:[0,T] \to H $ be two gradient flow curves of $\mathrm{md}^{-\delta} \circ F$ satisfying $\gamma^1_0 \leq \gamma^2_0$. Let $s,t \in [0,T]$. Assume either $s \leq t$ or $s > t$ and $\gamma^1_{s - t} \leq \gamma^2_0$. Then
\begin{equation}\label{eq3.24}
    \begin{split}
        \ell(\gamma^1_s,\gamma^2_t)^2 &\geq e^{2 \lambda \min \{ s,t \} } \bigg( \ell(\gamma_0^1,\gamma_0^2)^2 + (t-s)^2(\mathrm{md}^{-\delta})'(F(\gamma_0^1))^2 \\
        &\qquad \qquad  + 2(t-s) (\mathrm{md}^{-\delta}(F(\gamma_0^2)) - \mathrm{md}^{-\delta}(F(\gamma_0^1)))\bigg).
    \end{split}
\end{equation}
\end{Corollary}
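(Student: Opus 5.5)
The plan is to read \eqref{eq3.24} off Proposition \ref{Prop: Final second expansion} by throwing away the nonnegative $\lambda$-integral and bounding the double integral below by an explicit quantity. I will treat the two cases separately and then check that the exponential prefactor is $e^{2\lambda\min\{s,t\}}$ in both.

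\emph{Case $s\le t$.} Start from \eqref{final 2a}.
\begin{itemize}
\item Since $\lambda=\cosh(\sqrt\delta(L-20))\ge 0$, the term $\lambda\int_0^{t-s}\ell(\gamma^1_0,\gamma^2_\xi)^2\,d\xi$ is nonnegative and can be dropped.
\item For the double integral I use three monotonicity facts:
 \begin{itemize}
 \item $F=\ell(\Upsilon_{-L},\cdot)$ is causal and $\gamma^2$ is a future causal curve, so $\xi\mapsto F(\gamma^2_\xi)$ is nondecreasing.
 \item $\gamma^1_0\le\gamma^2_0$ gives $F(\gamma^2_0)\ge F(\gamma^1_0)$.
 \item On $H$ we have $F>L-20>0$, and $(\mathrm{md}^{-\delta})'(t)=\sinh(\sqrt\delta t)/\sqrt\delta$ is positive and increasing for $t>0$.
 \end{itemize}
\item Together these give $(\mathrm{md}^{-\delta})'(F(\gamma^2_\xi))^2\ge(\mathrm{md}^{-\delta})'(F(\gamma^1_0))^2$ for all $\xi\in[0,T]$.
\item Hence $2\int_0^{t-s}\int_0^r(\cdots)\,d\xi\,dr\ge (t-s)^2(\mathrm{md}^{-\delta})'(F(\gamma^1_0))^2$.
\item The linear term $2(t-s)\big(\mathrm{md}^{-\delta}(F(\gamma_0^2))-\mathrm{md}^{-\delta}(F(\gamma_0^1))\big)$ is already in the desired form.
\item The prefactor is $e^{2\lambda s}=e^{2\lambda\min\{s,t\}}$.
\end{itemize}

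\emph{Case $s>t$ with $\gamma^1_{s-t}\le\gamma^2_0$.} Start from \eqref{final 2b} and argue in the same way.
\begin{itemize}
\item Drop the nonnegative $\lambda$-integral.
\item The double integral now involves $F(\gamma^1_\xi)\ge F(\gamma^1_0)$, again because $F$ is nondecreasing along the causal curve $\gamma^1$. It is therefore bounded below by $(s-t)^2(\mathrm{md}^{-\delta})'(F(\gamma^1_0))^2=(t-s)^2(\mathrm{md}^{-\delta})'(F(\gamma^1_0))^2$.
\item The linear term rewrites as $2(s-t)\big(\mathrm{md}^{-\delta}(F(\gamma_0^1))-\mathrm{md}^{-\delta}(F(\gamma_0^2))\big)=2(t-s)\big(\mathrm{md}^{-\delta}(F(\gamma_0^2))-\mathrm{md}^{-\delta}(F(\gamma_0^1))\big)$.
\item The prefactor is $e^{2\lambda t}=e^{2\lambda\min\{s,t\}}$.
\end{itemize}

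There is no real obstacle beyond the bookkeeping. Each step only weakens a lower bound by a positive factor times a smaller quantity, so the inequality stays valid even if the bracket happens to be negative. The one point to verify carefully is the uniform lower bound on $(\mathrm{md}^{-\delta})'\circ F$ along both flow curves. This rests on $F$ being causal, on the flows being future causal curves staying in $H$ (where $F>0$), and on the ordering $\gamma^1_0\le\gamma^2_0$, which lets $F(\gamma^1_0)$ serve as the reference value in both cases.
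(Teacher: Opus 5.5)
Your proposal is correct and follows essentially the same route as the paper: drop the nonnegative $\lambda$-term from Proposition \ref{Prop: Final second expansion}, then bound $(\mathrm{md}^{-\delta})'\circ F$ along both flow curves from below by its value at $\gamma^1_0$, using that $F$ is causal, that $\gamma^1_0\le\gamma^2_0$, and that $(\mathrm{md}^{-\delta})'$ is monotone. Your write-up is more explicit than the paper's one-line argument, since you evaluate the double integral and carry out the sign bookkeeping for the case $s>t$.
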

\begin{proof}
    By monotonicity of $(\mathrm{md}^{-\delta})'$ and the fact that $F$ is a causal function and $\gamma^1, \gamma^2$ are causal curves, it follows that for every $s,t \in [0,T]$ (given our assumption $\gamma_0^1 \leq \gamma_0^2$)
    \[
    \min \left \{ (\mathrm{md}^{-\delta})'(F(\gamma^2_s)),(\mathrm{md}^{-\delta})'(F(\gamma^1_t)) \right \} \geq (\mathrm{md}^{-\delta})'(F(\gamma_0^1)).
    \]
    Together with the dropping of the term involving $\lambda$, the stated inequality \eqref{eq3.24} {summarizes essential consequences of} the estimates obtained in Propositions \ref{Prop: Second mainstream expansion}, \ref{Prop: Second upstream expansion} and \ref{Prop: Final second expansion}.
\end{proof}

Let us describe the gradient flow of $\mathrm{md}^{-\delta} \circ F = \mathrm{md}^{-\delta}(\ell(\Upsilon_{-L},\cdot))$ in the special case that the initial point $x$ belongs to the support of the geodesic $\Upsilon$. To this end, fix $\varepsilon \in [-20,20]$, and make the ansatz $\gamma_t = \Upsilon_{h(t)}$, so that $\gamma'_t = h'(t) \Upsilon'_{h(t)}$. Since $F$ is smooth in a neighbourhood of $\Upsilon([-20,20])$, we have that $\gamma'_t = (\mathrm{md}^{-\delta})'(F(\gamma_t)) \cdot \nabla F(\gamma_t) = (\mathrm{md}^{-\delta})'(h(t) + L) \Upsilon'_{h(t)}$. {Finding $\gamma$ therefore amounts to solving the following ordinary differential equation}:
\begin{equation}
\begin{cases}
    h'(t) = (\mathrm{md}^{-\delta})'(h(t) + L), \\ h(0) = \varepsilon.
\end{cases}
\end{equation}
This may be solved explicitly. The solution $h = h_\varepsilon$ is
\begin{equation}\label{hed}
    h_\varepsilon(t) = \frac{2}{\sqrt{\delta}} \,\mathrm{arctanh} \left (e^t \tanh\left( \frac{\sqrt{\delta}(\varepsilon + L)}{2}\right)\right ) - L \, .
\end{equation}
For $\delta>0$, the solution $h_\varepsilon$ blows up at a finite time
\begin{equation}
    t_* = -\log \left( \tanh \left( \frac{\sqrt{\delta}(\varepsilon + L)}{2} \right)\right)
\end{equation}
which diverges as $\delta \downarrow 0$.
Now our choice $\delta \ll L^{-4} \ll 1$ of scaling yields that $\sqrt{\delta}L $ is small. We desire it to be small enough so that $t_*$ is so large that its finiteness never enters into our results. If the lower bound of the curvature is $\delta=0$, then 
\eqref{hed} takes the {simpler} form
\begin{equation*}
h_{\varepsilon}(t) = (\varepsilon + L)e^t -L.
\end{equation*}
For general $\delta$ and $t < t_*$, given our scaling, we have that $h_\varepsilon(t) = (\varepsilon+ L)e^t - L + \Psi(\delta,L)$. Let us note that the gradient flow of $\mathrm{md}^{-\delta} \circ G$ along $\Upsilon$ starting at $\Upsilon_\varepsilon$ is given precisely by $\Upsilon_{-h_{-\varepsilon}(t)}$.

These solutions permit us to write down a monotonicity formula for the gradient flows of the modified time separation function: 

\begin{Lemma}[Monotonicity formula for the modified flow] \label{lem: monotonicity 1}
    Let $x \in J(\Upsilon_{\tilde R}, \Upsilon_{R})$, where $-20 <\tilde R \leq R < 20$ and let $\gamma: [0,T] \rightarrow \mathcal M$ be the gradient flow of $\mathrm{md}^{-\delta}\circ F$ starting at $x$, and $[0,T]$ some compact subinterval in its maximal domain of existence. Then 
    \[
    \gamma_t \in J ( \Upsilon_{h_{\tilde R}(t)}, \Upsilon_{h_{R}(t)} ) 
    \]
    for all $t \in [0,T]$ such that $-20 \leq h_{\tilde R}(t) \leq h_{R}(t) \leq 20$. Analogously, if $\sigma :[0,T] \rightarrow \mathcal M$ is the gradient flow of $\mathrm{md}^{-\delta} \circ G$ starting at $x$, then 
    \[
    \sigma_t \in J ( \Upsilon_{-h_{-\tilde R}(t)}, \Upsilon_{-h_{-R}(t)} ) 
    \]
    for all $t \in [0,T]$ such that $-20 \leq -h_{-\tilde R}(t) \leq -h_{-R}(t) \leq 20$.
\end{Lemma}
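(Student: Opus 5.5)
The idea is to compare $\gamma$ with the two special gradient flows that start on $\Upsilon$, and use that gradient flows of $\mathrm{md}^{-\delta}\circ F$ preserve the causal order. By hypothesis $\Upsilon_{\tilde R} \leq x \leq \Upsilon_R$. Let $\alpha_t := \Upsilon_{h_{\tilde R}(t)}$ and $\beta_t := \Upsilon_{h_R(t)}$. By the computation preceding the lemma, these are exactly the gradient flows of $\mathrm{md}^{-\delta}\circ F$ starting at $\Upsilon_{\tilde R}$ and $\Upsilon_R$. This holds as long as they stay in $\Upsilon([-20,20])$, where $F$ is smooth and equals $L+(\cdot)$ along $\Upsilon$. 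By uniqueness in Theorem \ref{Theorem: local existence}, no other flow starts at those points. They are defined for all $t$ with $-20\le h_{\tilde R}(t)\le h_R(t)\le 20$. Along this range they lie in $H$, because $F(\Upsilon_s)=L+s>L-20$ and $G(\Upsilon_s)=L-s>L-20$ for $|s|<20$.

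Next I would apply the synchronous expansion estimate, Proposition \ref{Prop: first expansion rate estimate}, twice: once to the pair $(\alpha,\gamma)$ with $\alpha_0\le\gamma_0$, and once to the pair $(\gamma,\beta)$ with $\gamma_0\le\beta_0$. This uses $f=\mathrm{md}^{-\delta}\circ F$ on $C=H$. The needed properties are as follows.
\begin{itemize}
\item $f$ is causally $\lambda$-convex with $\lambda=\cosh(\sqrt\delta(L-20))\ge 0$, by Theorem \ref{Theorem: convexityoftimeseparation}.
\item $f$ is $c$-steep with $c=(\mathrm{md}^{-\delta})'(L-20)>0$. This follows because $F$ is $1$-steep by the reverse triangle inequality, and $\mathrm{md}^{-\delta}$ is increasing and convex, so $f(y)-f(x)\ge (\mathrm{md}^{-\delta})'(F(x))(F(y)-F(x))\ge c\,\ell(x,y)$.
\end{itemize}
Since $\lambda\ge0$, the last assertion of that proposition gives $\alpha_t\le\gamma_t$ and $\gamma_t\le\beta_t$ on any common interval $[0,T']$. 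That is, $\gamma_t\in J(\Upsilon_{h_{\tilde R}(t)},\Upsilon_{h_R(t)})$.

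The main obstacle is the domain bookkeeping. The proposition needs $[0,T']$ to lie inside the maximal domains of both flows, inside $H$. For $\gamma$ this holds on $[0,T]$ by assumption. For $\alpha$ and $\beta$ it holds while $h_{\tilde R}(t),h_R(t)\in[-20,20]$, and at the endpoint values $\pm20$ one passes to the limit by continuity. The set of admissible $t$ is an interval, because $h_{\tilde R}$ and $h_R$ are increasing. So one takes $T'$ to be the supremum of admissible times in $[0,T]$. The case $\tilde R=R$, which forces $x=\Upsilon_R$, is consistent with this argument. One should also check that $\gamma$ does not leave $H$ before $T$, but that is part of the hypothesis that $[0,T]$ lies in its maximal domain in $H$.

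For $G$ the argument is symmetric. The function $\mathrm{md}^{-\delta}\circ G$ is anti-causal. Reversing time orientation turns it into a causal, convex, steep function, which reverses the order relations. The flows on $\Upsilon$ starting at $\Upsilon_{\tilde R}$ and $\Upsilon_R$ are $\Upsilon_{-h_{-\tilde R}(t)}$ and $\Upsilon_{-h_{-R}(t)}$. The same order-preservation argument, now in the reversed orientation, gives $\Upsilon_{-h_{-\tilde R}(t)}\le\sigma_t\le\Upsilon_{-h_{-R}(t)}$ in the original orientation, on the stated range of $t$.
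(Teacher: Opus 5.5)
Your proposal is correct and follows essentially the paper's argument. The paper likewise identifies the flows from $\Upsilon_{\tilde R}$ and $\Upsilon_R$ as $\Upsilon_{h_{\tilde R}(t)}$ and $\Upsilon_{h_R(t)}$, and obtains order preservation from the synchronous expansion estimate (Proposition~\ref{Prop: first expansion rate estimate}) with $\lambda\ge 0$; your verification of $c$-steepness, the endpoint and domain bookkeeping in $H$, and the time-reversal argument for $G$ simply fill in details the paper leaves implicit.
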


\begin{proof}
We argue the result in the case of the gradient flow of $\mathrm{md}^{-\delta} \circ F$ since the other case is analogous. Fix $x \in J(\Upsilon_{\tilde{R}},\Upsilon_R)$. Then,
since the gradient flow curve starting at $\Upsilon_{\tilde R}$ is precisely $\Upsilon_{h_{\tilde{R}}(t)}$ and similarly the flow starting at $\Upsilon_R$ is $\Upsilon_{h_R(t)}$, the claim follows by the {synchronous} expansion rate estimate {Proposition \ref{Prop: first expansion rate estimate}, which ensures monotonicity of the modified flow}.
\end{proof}

\begin{Lemma}[Almost monotonicity for the gradient flow]
\label{Lemma: Almost monotonicity of reparam grad flow}
    Let $x \in J(\Upsilon_{\tilde R},\Upsilon_{R})$, where $-10 <\tilde R \leq R < 10$. Let $\gamma$ be the maximal gradient flow of $\mathrm{md}^{-\delta}\circ F$ in $H$ (where $H$ is defined in \eqref{eq: H}) starting at $x$, and let $\bar\gamma_t:=\gamma_{g(t)}$ be its reparametrization. Then
    \[
    \overline{\gamma}_t \in J( \Upsilon_{\tilde R + t+ \Psi(\delta, L)} , \Upsilon_{R+ t +\Psi(\delta, L)} )
    \]
    for every $t \in [0,4]$.
\end{Lemma}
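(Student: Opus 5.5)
Starting from Lemma \ref{lem: monotonicity 1}, the plan is to show that the reparametrization $g$ stays uniformly close to the reparametrization $\bar g$ that straightens the flow along $\Upsilon$. Lemma \ref{lem: monotonicity 1} gives $\gamma_\tau \in J(\Upsilon_{h_{\tilde R}(\tau)},\Upsilon_{h_R(\tau)})$ for every $\tau$ such that these endpoints stay in $[-20,20]$. Hence $\bar\gamma_t=\gamma_{g(t)}\in J(\Upsilon_{h_{\tilde R}(g(t))},\Upsilon_{h_R(g(t))})$, and it suffices to show
\begin{equation*}
h_{\tilde R}(g(t)) \geq \tilde R + t - \Psi(\delta,L), \qquad h_R(g(t)) \leq R+t+\Psi(\delta,L)
\end{equation*}
for $t\in[0,4]$. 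Enlarging $J$ with errors of either sign is allowed by the convention on $\Psi$. I would use the asymptotic formula $h_\varepsilon(\tau)=(\varepsilon+L)e^{\tau}-L+\Psi(\delta,L)$, valid under the scaling $\delta L^4\ll1$. Thus the goal reduces to showing $(\varepsilon+L)(e^{g(t)}-1)=t+\Psi$ uniformly for $\varepsilon\in\{\tilde R,R\}$. Since $\varepsilon$ is bounded by $10$, this amounts to $g(t)=\frac{t}{L}+O(L^{-2})$, with an $L$-dependent error that is still absorbed, because $(\varepsilon+L)e^{g(t)}$ involves $L\,g(t)$.

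To control $g$, use the ODE \eqref{eq: reparametrization}:
\[
g'(t)=\frac{1}{(\mathrm{md}^{-\delta})'(F(\gamma_{g(t)}))}=\frac{\sqrt\delta}{\sinh(\sqrt\delta F(\bar\gamma_t))}.
\]
This requires two-sided bounds on $F(\bar\gamma_t)$. By causality of $F$ and the containment already obtained, $F(\bar\gamma_t)$ lies between $F(\Upsilon_{h_{\tilde R}(g(t))})=L+h_{\tilde R}(g(t))$ and $L+h_R(g(t))$. Since $\sinh(\sqrt\delta s)/\sqrt\delta=s(1+O(\delta s^2))$ and $\delta L^2$ is negligible, we get
\[
\frac{1+\Psi}{(L+R)e^{g(t)}} \leq g'(t) \leq \frac{1+\Psi}{(L+\tilde R)e^{g(t)}}.
\]
This is a bootstrap, because the bounds on $h_\varepsilon(g(t))$ are used while $g(t)$ is still being estimated. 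I would run a continuity argument on the maximal $t$ for which $h_{\tilde R}(g(t)),h_R(g(t))\in[-20,20]$ and the point stays in $H$.

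Integrating $e^{g}g'$ then gives
\[
\frac{t(1+\Psi)}{L+R}\le e^{g(t)}-1\le\frac{t(1+\Psi)}{L+\tilde R}.
\]
Therefore
\[
(\varepsilon+L)(e^{g(t)}-1)=t\cdot\frac{L+\varepsilon}{L+R\text{ or }L+\tilde R}(1+\Psi)=t+O(t/L)+\Psi ,
\]
and for $t\le4$ and $|\varepsilon|,|R|,|\tilde R|<10$ this equals $t+\Psi(\delta,L)$. In particular $h_\varepsilon(g(t))\in[-14-\Psi,14+\Psi]\subset(-20,20)$, which closes the bootstrap. The same bounds show that $g(t)$ is far below the blow-up time $t_*$ and below the maximal existence time of $\gamma$ in $H$, since $\gamma$ can only leave $H$ through $\partial H$, where $F$ or $G$ equals $L-20$, and that is excluded by the containment.

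The main obstacle is this bootstrap/continuity step: making sure the flow does not exit $H$ before the reparametrized time $4$, and handling the mild circularity between the bound on $g$ and the monotonicity lemma's range condition. The remaining estimates are routine Taylor expansions in $\sqrt\delta L$ and $1/L$.
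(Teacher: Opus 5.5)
Your proposal is correct and follows essentially the same route as the paper: you sandwich $g'$ between $1/(\mathrm{md}^{-\delta})'(L+h_R(g))$ and $1/(\mathrm{md}^{-\delta})'(L+h_{\tilde R}(g))$ using Lemma \ref{lem: monotonicity 1} and the monotonicity of $(\mathrm{md}^{-\delta})'$, integrate $e^{g}g'$ to obtain the two-sided bound \eqref{eq: bounds on g}, and substitute the result into $h_{\tilde R}$ and $h_R$. Your explicit continuity argument (keeping $h_R(g(t))<20$ and the flow inside $H$ up to $t=4$) makes rigorous a point the paper leaves implicit; alternatively, the circularity can be avoided with the a priori bound $g'\le 1/(\mathrm{md}^{-\delta})'(L+\tilde R)$, which follows from $F(\gamma_\tau)\ge F(x)\ge L+\tilde R$ and already keeps $g(4)$ below the time at which $h_R$ reaches $20$.
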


\begin{proof}
    The reparametrization $g=g(t)$ satisfies 
    {\eqref{eq: reparametrization}.}
    {Since $(\mathrm{md}^{-\delta})' (\cdot)$ 
    is nondecreasing}
    and Lemma \ref{lem: monotonicity 1} 
    {yields $\gamma_{g(t)} \in J(\Upsilon_{h_{\tilde R}(g(t))},\Upsilon_{h_R(g(t))})$, we find}
    \begin{align*}
    \frac{1}{(\mathrm{md}^{-\delta})' (F(\Upsilon_{h_R\circ g(t)}))} \leq g'(t) 
    &\leq \frac{1}{(\mathrm{md}^{-\delta})' (F(\Upsilon_{h_{\tilde R}\circ g(t)}))}.
    \end{align*}
    The 
    {lower bound} can be computed explicitly:
    \begin{align*}
    \frac{1}{(\mathrm{md}^{-\delta})' (F(\Upsilon_{h_R\circ g(t)}))}&=
        \frac{1}{(\mathrm{md}^{-\delta})'(h_{R}(g(t)) + L)} 
        \\& = \sqrt{\delta} \, \frac{1-e^{2g(t) \tanh^2(\sqrt{\delta}(L+{R})/2)}}{ 2e^{g(t)} \tanh(\sqrt{\delta}(L+{R})/2)}
    \end{align*}
    {where the formula ${\rm arctanh} s = \log \sqrt{\frac{1+s}{1-s}}$ and Equation \eqref{hed} have been used.}
    Given that $\sqrt{\delta} L \to 0$, an explicit computation shows that the right hand side is of the form $\frac{1 + \Psi(\delta,L)}{L+R}e^{-g(t)}$. Multiplying by $e^{g(t)}$ and integrating  gives us that
    \begin{equation}
    \label{eq: bounds on g}
        \log \left( 1 + \frac{t + \Psi(\delta,L)}{L + R} \right) \leq g(t) \leq \log \left( 1 + \frac{t + \Psi(\delta,L)}{L + \tilde{R}}  \right).
    \end{equation}
    Using now the monotonicity established in Lemma \ref{lem: monotonicity 1}, we get that $\bar \gamma_t \in J(\Upsilon_{s_-(t)},\Upsilon_{s_+(t)})$, where $s_\pm(t)$ have the claimed form. Let us demonstrate this for $s_-(t)$:
    \begin{align*}
        s_-(t) &= h_{\tilde R}(\log(1 + (t + \Psi(\delta,L))/(L+{R}))) 
        \\&= (L + \tilde R) (1 + (t+ \Psi)/(L+R)) - L + \Psi\\
        &= \tilde R + \frac{L + \tilde R}{L + R}(t + \Psi) + \Psi \\
        &= \tilde R + t + \Psi,
    \end{align*}
    where in the last equality we use $t \in [0,4]$ to collect the term $t(\tilde R - R)/(L + R)$ into the term $\Psi$.
\end{proof}

\begin{Remark}
    If $\delta=0$ the previous lemma gets much simpler and more explicit. Let $\Upsilon_{\tilde{R}} \leq x \leq \Upsilon_{R}$ and $g$ be the reparametrization of the modified flow starting at $x$. Then, following the proof of the lemma
    we get that
    \begin{equation*}
    \frac{1}{(L+R) e^{g(t)}} \leq g'(t) \leq  \frac{1}{(L+ \tilde R) e^{g(t)}}
    \end{equation*}
    and the ODE comparison produces 
    \begin{equation*}
    \log \left ( 1+ \frac{t}{L+R} \right ) \leq g(t) \leq \log \left ( 1+ \frac{t}{L+ \tilde{R}} \right ).
    \end{equation*}
    We remark that the last inequality holds for all $t \leq  4$
    and there is no blow up for the reparametrization. Moreover we get that
    \[
    \overline{\gamma}_t \in J( \Upsilon_{\tilde R + \frac{L+ \tilde R}{L+R} t} , \Upsilon_{R + \frac{L+R}{L+ \tilde R} t})
    \]
    for every $t \leq 4$ and every $x \in J( \Upsilon_{\tilde R}, \Upsilon_{R})$. Notice that this is compatible with the previous lemma since $\frac{L+ \tilde{R}}{L+R}t = t - \frac{R -\tilde R}{L+R}t = t + \Psi(\delta, L)$.
    
\end{Remark}

\begin{Proposition}[{Asynchronous} expansion rate estimate for the reparametrized flow]
\label{Prop: second estimate for reparametrized flow}
Let $(\mathcal M,\mathsf g)$ be globally hyperbolic with timelike sectional curvature bounded below by $-\delta \leq 0$ and let $\Upsilon:[-L,L] \to \mathcal M$ be a timelike maximizing geodesic parametrized by Lorentzian arclength. 
Let $F(x):=\ell(\Upsilon_{-L},x)$, and $\gamma^1,\gamma^2:[0,T] \to \mathcal M$ be two gradient flow curves of $\mathrm{md}^{-\delta} \circ F$ starting in $ I(\Upsilon_{\tilde R},\Upsilon_{R})$, where $-10 < \tilde R \leq R < 10 \le L/2$, and satisfying $\gamma^1_0 \leq \gamma^2_0$. Let $g_1,g_2$ be the reparametrizations of $\gamma^1,\gamma^2$ introduced in \eqref{eq: reparametrization} and write $\bar \gamma^j = \gamma^j \circ g_j$, $j = 1,2$ for the reparametrized curves. Let $s,t \in [0,4]$ such that $\bar \gamma_s \leq \bar \gamma_t$. If $s > t$, suppose in addition that $\bar \gamma^1_{s-t} \leq \bar \gamma^2_0$. Then
\begin{equation}
    \ell(\bar \gamma^1_s,\bar \gamma^2_t)^2 \geq \ell(\gamma_0^1, \gamma_0^2)^2 + (t-s)^2 + 2(t-s)(F(\gamma_0^2)- F(\gamma_0^1)) + \Psi (\delta, L) \, .
\end{equation}
\end{Proposition}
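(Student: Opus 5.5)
The plan is to reduce the statement to Corollary \ref{Cor: Compact rewriting of second expansion estimates}, applied to the unreparametrized flows at the parameters $g_1(s)$ and $g_2(t)$, and then to show that the reparametrization turns each factor into its $\delta=0$, $L=\infty$ limit up to errors $\Psi(\delta,L)$.

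\textbf{Step 1 (reduction).} Write $s' := g_1(s)$ and $t' := g_2(t)$. We have $\bar\gamma^1_s=\gamma^1_{s'}$ and $\bar\gamma^2_t=\gamma^2_{t'}$. The hypothesis $\bar\gamma^1_{s-t}\le\bar\gamma^2_0$ when $s>t$ must be converted into the hypothesis $\gamma^1_{s'-t'}\le\gamma^2_0$ needed in the Corollary. If $s'\le t'$, nothing is needed. Otherwise, I would compare $s'-t'$ with $g_1(s-t)$. By \eqref{eq: bounds on g}, both $g_j$ equal $\frac{\cdot+\Psi}{L}(1+O(L^{-1}))$, so $s'-t' = g_1(s-t)+\Psi/L$. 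Applying Lemma \ref{Lemma: Almost monotonicity of reparam grad flow} then lets us move along $\gamma^1$ by the small amount $\Psi/L$ in parameter. This is where the error term enters; I expect it is legitimate to absorb it into $\Psi$ by perturbing the endpoint slightly, which the additive error allows. The Corollary then gives
\[\ell(\bar\gamma^1_s,\bar\gamma^2_t)^2\ge e^{2\lambda\min\{s',t'\}}\Big(\ell(\gamma^1_0,\gamma^2_0)^2+(t'-s')^2 m'(F(\gamma^1_0))^2+2(t'-s')\big(m(F(\gamma^2_0))-m(F(\gamma^1_0))\big)\Big),\]
where $m=\mathrm{md}^{-\delta}$.

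\textbf{Step 2 (asymptotics).} Since $F(\gamma^j_0)\in(L-10,L+10)$ and $\sqrt\delta L\to0$, we have
\[m'(F)=\frac{\sinh(\sqrt\delta F)}{\sqrt\delta}=F(1+O(\delta L^2)),\]
so $m'(F(\gamma^1_0))=L+O(1)$. By \eqref{eq: bounds on g}, $g_j(\tau)=\frac{\tau}{L}+O(L^{-2})$, so $t'-s'=\frac{t-s}{L}+O(L^{-2})$. Hence
\[(t'-s')^2 m'(F(\gamma^1_0))^2=(t-s)^2+\Psi.\]
For the cross term, the mean value theorem gives
\[m(F_2)-m(F_1)=m'(\xi)(F_2-F_1)=(L+O(1))(F_2-F_1),\]
with $|F_2-F_1|\le 20$. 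Multiplying by $t'-s'$ yields $2(t-s)(F(\gamma^2_0)-F(\gamma^1_0))+\Psi$. Finally, $\lambda=\cosh(\sqrt\delta(L-20))\le 2$ and $\min\{s',t'\}=O(1/L)$, so $e^{2\lambda\min\{s',t'\}}=1+O(1/L)$. All quantities in the bracket are bounded, since $\ell\le 40$ on $H$-diamonds, so the prefactor also contributes only $\Psi$.

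\textbf{Main obstacle.} The delicate point is Step 1 in the case $s>t$: transferring the causal hypothesis between the two parametrizations, because $g_1$ and $g_2$ differ by $O(L^{-2})$ in parameter. My first attempt would be to use the monotonicity from Proposition \ref{Prop: first expansion rate estimate} together with Lemma \ref{Lemma: Almost monotonicity of reparam grad flow}, so that $\gamma^1$ moves by only $\Psi$ in time-separation over such parameter ranges. If the required relation fails, I would instead apply the Corollary to a slightly shifted pair, still of the form $\gamma^1_{s''}$ with $s''\le g_1(s)$. I would then recover the bound via the reverse triangle inequality,
\[\ell(\gamma^1_{s''},\gamma^2_{t'})\le\ell(\gamma^1_{s''},\gamma^1_{s'})+\ell(\gamma^1_{s'},\gamma^2_{t'}),\]
where the first term is $O(\Psi)$ by the gradient-flow speed bound in Theorem \ref{Theorem: local existence}.
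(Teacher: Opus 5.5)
Your route is the paper's: apply Corollary \ref{Cor: Compact rewriting of second expansion estimates} at the parameters $g_1(s),g_2(t)$, then use \eqref{eq: bounds on g}, $(\mathrm{md}^{-\delta})'(F)=L+O(1)$ near $\Upsilon$, and $e^{2\lambda\min\{g_1(s),g_2(t)\}}=1+\Psi$. Your Step 2 is essentially the paper's computation, including the mixed term that the paper only calls ``similar''.

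\textbf{Where Step 1 breaks.} The paper applies the Corollary without checking its causal hypothesis at the shifted parameters, so you are right that Step 1 needs an argument. However, both fixes you sketch fail.
\begin{enumerate}
\item A relation $\gamma^1_{\tau_0}\le\gamma^2_0$ does not survive moving $\gamma^1$ forward by any positive amount, however small. If the relation is null, it breaks immediately, because the flow is timelike. A causal relation is a hypothesis, not a quantity that an additive $\Psi$ can absorb. Lemma \ref{Lemma: Almost monotonicity of reparam grad flow} only locates $\bar\gamma^1$ relative to $\Upsilon$; it says nothing about $\gamma^2_0$.
\item Your displayed inequality is the Riemannian triangle inequality. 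In Lorentzian signature it is reversed: $\ell(\gamma^1_{s''},\gamma^2_{t'})\ge\ell(\gamma^1_{s''},\gamma^1_{s'})+\ell(\gamma^1_{s'},\gamma^2_{t'})$. So a lower bound at the earlier point $\gamma^1_{s''}$ gives only an upper bound at $\gamma^1_{s'}$. Lower bounds transfer only to a later first point or an earlier second point, and both make the Corollary's hypothesis harder, not easier.
\end{enumerate}
You also miss the subcase $s\le t$ but $g_1(s)>g_2(t)$. It is not vacuous: $F(\gamma^1_0)\le F(\gamma^2_0)$ forces $g_1'\ge g_2'$ initially, so take e.g.\ $s=t$ small. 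In this subcase there is no causal hypothesis to transfer at all.

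\textbf{A fix.} The first-contact argument used elsewhere in the paper closes the gap.
\begin{itemize}
\item Set $\tau_0:=g_1(s-t)$ if $s>t$ and $\tau_0:=0$ otherwise, so that $\gamma^1_{\tau_0}\le\gamma^2_0$. Set $a:=g_1(s)-g_2(t)$. Only $a>\tau_0$ needs treatment, and then $a-\tau_0=\Psi/L$ by \eqref{eq: bounds on g}.
\item Drop the $\lambda$-term in Proposition \ref{Prop: Second upstream expansion} and use monotonicity of $(\mathrm{md}^{-\delta})'$. This gives $\ell(\gamma^1_\tau,\gamma^2_0)^2\ge P(\tau)$ whenever $\gamma^1_\tau\le\gamma^2_0$, where
\[
P(\tau):=\ell(\gamma^1_0,\gamma^2_0)^2+\tau^2(\mathrm{md}^{-\delta})'(F(\gamma^1_0))^2-2\tau\bigl(\mathrm{md}^{-\delta}(F(\gamma^2_0))-\mathrm{md}^{-\delta}(F(\gamma^1_0))\bigr).
\]
\item Since $P'=O(L)$ on $[\tau_0,a]$, $P$ varies there only by $\Psi$.
\item Your Step 2 shows that $P(\tau_0)$ equals the right-hand side of the claim up to $\Psi$. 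When $\tau_0=0$, note that $a>0$ forces $t-s\le\Psi$.
\end{itemize}
Now there are two cases.
\begin{enumerate}
\item The right-hand side of the claim is $\le\Psi$. Then the claim is trivial, because $\bar\gamma^1_s\le\bar\gamma^2_t$ gives $\ell\ge0$.
\item $P>0$ on $[\tau_0,a]$. Suppose $\gamma^1_a\not\le\gamma^2_0$. Take the largest $\tau^*\in[\tau_0,a)$ with $\gamma^1_{\tau^*}\le\gamma^2_0$; it exists because $\le$ is closed and $\gamma^1$ is causal. Openness of $\ll$ gives $\ell(\gamma^1_{\tau^*},\gamma^2_0)=0$, hence $0\ge P(\tau^*)>0$, a contradiction.
\end{enumerate}
So in the second case $\gamma^1_a\le\gamma^2_0$, and the Corollary applies as you intended.
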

\begin{proof}
    We argue the case $s \leq t$, the argument for $s > t$ (under the additional assumption $\bar \gamma_{s-t} \leq \bar \gamma_t)$ is analogous. We make use of the compactly rewritten {asynchronous} expansion rate estimate given in Corollary \ref{Cor: Compact rewriting of second expansion estimates}. Thus,
    \begin{align*}
        \ell(\bar\gamma^1_s,\bar \gamma^2_t)^2 = \ell(\gamma^1_{g_1(s)},\gamma^2_{g_2(t)})^2 &\geq e^{2 \lambda \min(g_1(s),g_2(t))} \bigg(\ell(\gamma^1_0,\gamma^2_0)^2 + (g_2(t) - g_1(s))^2 (\mathrm{md}^{-\delta})'(F(\gamma^1_0))^2 \\
        &\quad + 2(g_2(t) - g_1(s)) (\mathrm{md}^{-\delta}(F(\gamma_0^2)) - \mathrm{md}^{-\delta}(F(\gamma_0^1))) \bigg).
    \end{align*}
    Then
    {\eqref{eq: bounds on g} yields}
    \begin{equation*}
        g_2(t) - g_1(s) \geq \log\left(1 + \frac{t+\Psi}{L + \tilde R} \right) - \log\left(1 + \frac{s+\Psi}{L + R}\right) = \frac{t}{L + \tilde R} - \frac{s}{L +R} + \frac{\Psi}{L},
    \end{equation*}
    where in the last estimate we used the Maclaurin expansion for $\log (1+x)$. Since {\eqref{eq: reparametrization} gives}
    \begin{equation*}
        (\mathrm{md}^{-\delta})'(F(\gamma_0^1))^2 \geq (\mathrm{md}^{-\delta})'(L +\tilde R)^2 = (L + \tilde R)^2 + \Psi,
    \end{equation*}
    we have that
    \begin{align*}
        (g_2(t) - g_1(s))^2 (\mathrm{md}^{-\delta})'(F(\gamma_0^1))^2 &\geq \bigg( \frac{t}{L + \tilde R} - \frac{s}{L + R} + \frac{\Psi}{L}\bigg)^2 ((L + \tilde R)^2 + \Psi)\\
        &= \bigg(\left(\frac{t}{L + \tilde R} - \frac{s}{L +R}\right)^2 + \frac{\Psi}{L^2}\bigg) ((L+\tilde R)^2 + \Psi)\\
        &= (t -s)^2 + \Psi.
    \end{align*}
    The mixed term can be estimated in a similar way. Finally, noticing that $e^{2 \lambda \min(g_1(s),g_2(t))} = 1 + \Psi$ concludes the proof.
\end{proof}

\subsection{Busemann sum estimates, almost linearity, almost Pythagoras formula}
\label{Subsection: Busemann sum estimates, almost linearity, almost Pythagoras}

\begin{Definition}[Approximate Busemann functions]
Given the timelike maximizer $\Upsilon:[-L,L] \to \mathcal M$, we use $F$ and $G$ from \eqref{F and G} to define the \emph{approximate Busemann functions} $b_L^\pm: J(\Upsilon_{-L},\Upsilon_L) \to \R$ associated with $\Upsilon$ via
\begin{align}
\label{b^+_L}
    &b^+_L(x):=L - \ell(\Upsilon_{-L},x) =: L - F(x),\\
    &b_L^-(x):= L - \ell(x, \Upsilon_L) =: L - G(x).
\label{b^-_L}
\end{align}
\end{Definition}

We remark that our conventions for $b^\pm_L$ are exactly opposite to the ones used for (approximate) Busemann functions in \cite{QuintetI}. Note the reverse triangle inequality implies $b_L^+ + b_L^- \geq 0$ on $J(\Upsilon_{-L},\Upsilon_L)$, and $b_L^\pm$ are Lipschitz on any diamond of the form $J(\Upsilon_{-L + \varepsilon}, \Upsilon_{L - \varepsilon})$ for any $0 < \varepsilon < L$.

\begin{Proposition}[Estimate on the Busemann sum]
\label{Prop: Estimate on Busemann sum}
On $J:=J(\Upsilon_{-20},\Upsilon_{20})$, we have that
\begin{equation}
    0 \leq b^+_L + b^-_L \leq \Psi(\delta,L).
\end{equation}
Moreover, there exists a constant $C > 0$ independent of $L$ and $\delta$ such that
\begin{equation}
    -C \leq L^2 - \frac{F^2}{2} - \frac{G^2}{2} \leq C \qquad \text{on $J$} .
\end{equation}
\end{Proposition}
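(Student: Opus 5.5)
The lower bound $b^+_L+b^-_L\ge 0$ is the reverse triangle inequality $F(x)+G(x)\le \ell(\Upsilon_{-L},\Upsilon_L)=2L$ for $x\in J(\Upsilon_{-L},\Upsilon_L)$. The upper bound $F+G\ge 2L-\Psi$ should come from the Lorentzian Toponogov comparison (Theorem \ref{thm: triangle comparison}), applied as a lower bound on the distance from a vertex to a point on the opposite side. We then convert it, via the hyperbolic law of cosines (Proposition \ref{Thm : Law of cosines}), into the statement that a point close to the middle of a long segment cannot lose much total length.

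Fix $x\in J=J(\Upsilon_{-20},\Upsilon_{20})$. Since $\Upsilon_{-20}\le x\le \Upsilon_{20}$, reverse triangle inequalities give $F(x)\ge L-20$ and $G(x)\ge L-20$, and $x\in J(\Upsilon_{-L},\Upsilon_L)$ is causally related to $\Upsilon_0$ or at least lies in the diamond. Rather than compare at $x$ directly, I would exploit the point $\Upsilon_0$ on the long side. Consider first $x$ timelike related to both $\Upsilon_{-L}$ and $\Upsilon_L$; the case of $x$ in the boundary follows by continuity of $F$ and $G$.

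The cleanest route is the one used for the BEMG rigidity: use the convexity version, Theorem \ref{Theorem: Toponogov convexity version}. Take a maximizing geodesic $\sigma$ from $\Upsilon_{-20}$ through $x$ (or from $\Upsilon_{-20}$ to $\Upsilon_{20}$ through a broken path). A more direct approach is to use the triangle $\Delta(\Upsilon_{-L},x,\Upsilon_L)$ with sides $a=F(x)$, $b=G(x)$, $c+d=2L$, and a comparison point $q=\Upsilon_{s}$ on the long side with $q\le x$ or $x\le q$. Here I would use variant (A) of Proposition \ref{Thm : Law of cosines} together with the comparison inequality, applied with vertex $x$ (two maximizers from $x$'s perspective needed; to get the right direction I would instead realize the triangle as $\Delta(\Upsilon_{-L},\Upsilon_{s},\Upsilon_L)$ with $x$ playing the role of comparison). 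Concretely, I expect the usable inequality to be
\[
\ell(q,x)^2 \text{ or } \ell(x,q)^2 \;\le\; \frac{c\,b^2+a^2 d}{c+d}-cd+O(\delta L^4),
\]
where $c=L+s$ and $d=L-s$ with $|s|\le 20$, which is valid by Toponogov in the lower-curvature regime. The left side is $\ge 0$. Writing $a=L+\alpha$ and $b=L+\beta$ with $\alpha,\beta\in[-20,20]$, expanding gives
\[
\frac{(L+s)(L+\beta)^2+(L+\alpha)^2(L-s)}{2L}-(L^2-s^2) = L(\alpha+\beta) + O(1).
\]
Nonnegativity then yields $\alpha+\beta\ge -O(1)/L - O(\delta L^3)$, that is, $F+G\ge 2L-\Psi(\delta,L)$, using $\delta L^4\to 0$. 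This is the main obstacle: getting the direction of the comparison right, since Harris's theorem bounds $\ell(p_1,q)$ from above at a point on a side. I would set up the triangle so that $x$ is a vertex $p_2$ and $q=\Upsilon_s$ lies on the maximizing side $p_1p_3=\Upsilon$. If that gives the wrong inequality, I would instead use the monotone alternative: by the curvature bound the function $s\mapsto \mathrm{md}^{-\delta}(\ell(\cdot))$ is convex along $\Upsilon$, and choosing $s$ so that $x$ and $\Upsilon_s$ are related, I would compare.

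For the second claim, write $F=L+\alpha$ and $G=L+\beta$ with $|\alpha|,|\beta|\le 20$ on $J$; this follows from $L-20\le F,G\le L+20$, which in turn comes from the reverse triangle inequality with $\Upsilon_{\pm20}$. Then
\[
L^2-\tfrac{F^2}{2}-\tfrac{G^2}{2} = -L(\alpha+\beta)-\tfrac{\alpha^2+\beta^2}{2}.
\]
The quadratic term is bounded by $400$. For the linear term, the first part gives $0\le -(\alpha+\beta)$, and the law-of-cosines computation above gives the quantitative bound $-(\alpha+\beta)\le C/L+O(\delta L^3)$. Hence $|L(\alpha+\beta)|\le C+O(\delta L^4)\le C'$, which yields the two-sided constant bound, uniform in $L$ and in small $\delta L^4$.
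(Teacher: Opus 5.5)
You have several parts right. The lower bound $b_L^++b_L^-\ge 0$ is correct. So is your expansion
\[
\frac{c\,G^2+F^2d}{c+d}-cd=L(\alpha+\beta)+O(1)\qquad (c=L+s,\ d=L-s),
\]
and so is your handling of $L^2-\frac{F^2}{2}-\frac{G^2}{2}=-L(\alpha+\beta)-\frac{\alpha^2+\beta^2}{2}$. You also correctly see that the second claim needs the rate $2L-F-G=O(1/L)+O(\delta L^3)$, not just $\Psi$.

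The gap is the one inequality everything rests on:
\[
\ell(\Upsilon_s,x)^2\le \frac{c\,G(x)^2+F(x)^2d}{c+d}-cd+O(\delta L^4).
\]
You attribute it to ``Toponogov'' but never derive it. It compares the \emph{middle} vertex $x$ of $\Delta(\Upsilon_{-L},x,\Upsilon_L)$ with a point of the \emph{longest} side. Theorem~\ref{thm: triangle comparison} and its time dual only bound distances from the past vertex $p_1$ (resp.\ to the future vertex $p_3$) to points of the opposite side. The longest side always joins the past-most and future-most vertices, so no relabelling brings the pair $(\Upsilon_s,x)$ into that form. A global all-pairs comparison for triangles of size $2L$ is not among the results available here.

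Your fallback does not close the gap either. The function $s\mapsto\mathrm{md}^{-\delta}(\ell(\Upsilon_s,x))$ is only defined and convex on the part of $\Upsilon$ in $I^-(x)$, and $s\mapsto\mathrm{md}^{-\delta}(\ell(x,\Upsilon_s))$ only on the part in $I^+(x)$. The timelike curvature bound says nothing across the stretch of $\Upsilon$ that is spacelike to $x$. The first-order terms you would obtain at the two ends of that stretch are therefore unrelated and cannot cancel.

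The missing idea is to expand $F$ and $G$ at the same point. You mention this in passing and then drop it; it is exactly the paper's argument. Take $x\in I(\Upsilon_{-20},\Upsilon_{20})$ and a maximizer $\sigma$ from $\Upsilon_{-20}$ to $x$. Apply the $\lambda$-convexity of $\mathrm{md}^{-\delta}\circ F$ and $\mathrm{md}^{-\delta}\circ G$ (Theorem~\ref{Theorem: convexityoftimeseparation}) along this single curve, using the tangent line at $\sigma_0=\Upsilon_{-20}$. Both functions are smooth there, with gradients $\pm\Upsilon'_{-20}$, because $\Upsilon_{-20}$ is an interior point of the maximizer $\Upsilon$. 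Set $A:=\langle\Upsilon'_{-20},\sigma'_0\rangle\in[0,80]$, $e:=\ell(\Upsilon_{-20},x)$, and $c=L-20$, $d=L+20$ (your $s=-20$). This gives
\[
F(x)^2\ge c^2+2cA+\lambda e^2+O(\delta L^4),\qquad G(x)^2\ge d^2-2dA+\lambda e^2+O(\delta L^4).
\]
Multiply these by $d$ and $c$ respectively and add. This eliminates $A$ and yields precisely your inequality, with $\lambda e^2\ge 0$ on the left. From there your computation finishes both claims. The paper instead adds the two inequalities directly for the second claim and takes square roots for the first.

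In triangle-comparison language, the fix amounts to two steps. Apply Theorem~\ref{thm: triangle comparison} to $\Delta(\Upsilon_{-L},\Upsilon_{-20},x)$, and its time dual to $\Delta(\Upsilon_{-20},x,\Upsilon_L)$, at points of $\sigma$ near $\Upsilon_{-20}$. This gives two one-sided bounds on the hyperbolic angle at $\Upsilon_{-20}$, which together are equivalent to the middle-vertex inequality you wanted.
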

\begin{proof}
    We restrict ourselves to showing this inequality in the interior $I := I(\Upsilon_{-20},\Upsilon_{20})$ of $J$. The claim for $J$ then follows by continuity. Pick $x \in I$ and let $\sigma:[0,1] \to \mathcal M$ be a maximizing timelike geodesic parametrized proportional to $g$-arclength connecting $\Upsilon_{-20}$ to $x$. By causal $\lambda$-convexity of $F_\delta:=\mathrm{md}^{-\delta} \circ F$ and $G_\delta:=\mathrm{md}^{-\delta} \circ G$ on $I(\Upsilon_{-20},\Upsilon_{20})$, we have that 
    \begin{align}
        F_\delta(x) &\geq F_\delta (\Upsilon_{-20}) + \frac{d}{dt}\bigg|_{t=0} F_\delta(\sigma_t) + \frac{\lambda}{2} \ell(\Upsilon_{-20},x)^2 \nonumber \\
        &= \mathrm{md}^{-\delta}(L - 20) + (\mathrm{md}^{-\delta})'(L - 20) \langle \Upsilon_{-20}', \sigma'_0\rangle  + \frac{\lambda}{2} \ell(\Upsilon_{-20}, x)^2. \label{eq: F_delta expansion}
    \end{align}
    Analogously,
    \begin{align}
    \label{eq: G_delta expansion}
        G_\delta(x) \geq \mathrm{md}^{-\delta}(L + 20) - (\mathrm{md}^{-\delta})'(L + 20) \langle \Upsilon_{-20}',\sigma_0'\rangle + \frac{\lambda}{2} \ell(\Upsilon_{-20}, x)^2.
    \end{align}
    Let us define the quantities $A:=\langle \Upsilon'_{-20},\sigma_0'\rangle$ and $B:=\ell(\Upsilon_{-20},x)^2$. Convexity and the fact that $F(x) \leq L + 20$ imply
    \begin{equation*}
        0 \leq A \leq \frac{\mathrm{md}^{-\delta}(L+20) - \mathrm{md}^{-\delta}(L-20)}{(\mathrm{md}^{-\delta})'(L-20)} \leq 40 \frac{(\mathrm{md}^{-\delta})'(L+20)}{(\mathrm{md}^{-\delta})'(L-20)} \leq 80,
    \end{equation*}
    where the last inequality holds for sufficiently large $L$. Moreover, by the reverse triangle inequality,
    \begin{equation*}
        0 \leq B = \ell(\Upsilon_{-20},x)^2 \leq \ell(\Upsilon_{-20},\Upsilon_{20})^2 = 1600.
    \end{equation*}
    Observe that the bounds on $A$ and $B$ do not depend on $L$ or $\delta$. Expanding \eqref{md} for small $\delta$ shows that $F_\delta = \frac{F^2}{2} + \Psi$, and similarly $G_\delta = \frac{G^2}{2} + \Psi$. It follows (by similarly expanding the right hand sides in \eqref{eq: F_delta expansion} and \eqref{eq: G_delta expansion}) that 
    \begin{align}
        \frac{F(x)^2}{2} &\geq \frac{(L - 20)^2}{2} + (L-20)\langle \Upsilon_{-20}',\sigma_0'\rangle + \frac{\lambda}{2}  \ell(\Upsilon_{-20},x)^2 + \Psi, \label{eq: estimate from below for F}\\
        \frac{G(x)^2}{2} &\geq \frac{(L + 20)^2}{2} - (L+20)\langle \Upsilon_{-20}',\sigma_0'\rangle + \frac{\lambda}{2}  \ell(\Upsilon_{-20},x)^2 + \Psi. \label{eq: estimate from below for G}
    \end{align}
    Upon taking square roots, we obtain
    \begin{align}
        &F(x) \geq \sqrt{(L - 20)^2 + 2(L - 20)A + \lambda B} + \Psi(\delta, L),\\
       &G(x) \geq \sqrt{(L + 20)^2 - 2(L + 20)A + \lambda B} + \Psi(\delta,L).
   \end{align}
    We conclude that
    \begin{align*}
        0 &\leq b_L^-(x) + b_L^+(x) 
        \\&= 2L - F(x) - G(x)\\
        &\leq 2L - \sqrt{(L - 20)^2 + 2(L - 20)A + \lambda B} 
        - \sqrt{(L + 20)^2 - 2(L + 20)A + \lambda B}
         + \Psi(\delta, L)\\
        &= \Psi(\delta,L).
    \end{align*}
    To obtain the additional claim, we sum \eqref{eq: estimate from below for F} and \eqref{eq: estimate from below for G} and simplify:
    \begin{align*}
        L^2 - \frac{F(x)^2}{2} - \frac{G(x)^2}{2} &\leq L^2 -  \frac{(L - 20)^2}{2} - (L-20)A - \lambda  B
        - \frac{(L + 20)^2}{2} + (L+20)A
        + \Psi\\ &\leq C.
    \end{align*}
    For the lower bound, notice that {$F,G \in [L-20,L+20]$ on $J(\Upsilon_{-20},\Upsilon_{20})$. Hence $|F-G|\le 40$ while} $2L \geq F + G$ by the reverse triangle inequality. Thus
    \begin{align*}
        L^2 - \frac{F^2}{2} - \frac{G^2}{2} {=} \underbrace{\bigg( L - \frac{F + G}{2} \bigg)}_{\geq 0} \bigg(L + \frac{F + G}{2}\bigg) - \frac{1}{4} (F - G)^2 \geq -C.
    \end{align*}
\end{proof}

\begin{Proposition}[Almost linearity of the Busemann functions along the flow]
\label{Proposition: Almost linearity}
For every $x \in I(\Upsilon_{-10},\Upsilon_{10})$ and $t \in [0,4]$, the {(reparametrized)} flow $\bar \gamma_t$ of $F$ \eqref{F and G} starting at $x$ satisfies
\begin{equation}
{ 0  \leq F(\bar \gamma_t) - F(x) -t \leq \Psi(\delta,L). }
\label{linearity}
\end{equation}
\end{Proposition}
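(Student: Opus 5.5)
Both inequalities in \eqref{linearity} should follow from the gradient flow identities of Theorem \ref{Theorem: local existence} combined with the steepness of $F$. For the upper bound we will use the Busemann sum estimate of Proposition \ref{Prop: Estimate on Busemann sum}.

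\emph{Lower bound.} Set $f:=\mathrm{md}^{-\delta}\circ F$. By Theorem \ref{Theorem: local existence}(ii), for a.e.\ $r$,
\[
\tfrac{d}{dr} f(\gamma_r)=\|\dot\gamma_r\|_{\mathsf g}^2 .
\]
Since $F$ is $1$-steep, $f$ is $(\mathrm{md}^{-\delta})'(F)$-steep along causal directions. Hence $\|\dot\gamma_r\|_{\mathsf g}\ge(\mathrm{md}^{-\delta})'(F(\gamma_r))$, which is also the computation done in the proof of Proposition \ref{Prop: Second mainstream expansion}. Dividing by $(\mathrm{md}^{-\delta})'(F(\gamma_r))$ gives
\[
\tfrac{d}{dr}F(\gamma_r)\ge (\mathrm{md}^{-\delta})'(F(\gamma_r)) .
\]
By the chain rule and \eqref{eq: reparametrization}, $\tfrac{d}{dt}F(\bar\gamma_t)=g'(t)\,\tfrac{d}{dr}F(\gamma_r)|_{r=g(t)}\ge 1$ a.e. Integrating this (the functions involved are Lipschitz) gives $F(\bar\gamma_t)-F(x)\ge t$. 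This holds as long as $\bar\gamma_t$ stays in $H$, which Lemma \ref{Lemma: Almost monotonicity of reparam grad flow} guarantees for $t\in[0,4]$ because $\bar\gamma_t\in J(\Upsilon_{-10+t+\Psi},\Upsilon_{10+t+\Psi})\subseteq J(\Upsilon_{-20},\Upsilon_{20})$.

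\emph{Upper bound.} Here we use the reverse triangle inequality for $G$ along the causal curve $\bar\gamma$:
\[
G(x)\ge \ell(x,\bar\gamma_t)+G(\bar\gamma_t)\ge G(\bar\gamma_t).
\]
Combined with Proposition \ref{Prop: Estimate on Busemann sum} on $J(\Upsilon_{-20},\Upsilon_{20})$, which contains both $x$ and $\bar\gamma_t$, this yields
\[
F(\bar\gamma_t)-F(x)\le \big(2L-G(\bar\gamma_t)\big)-\big(2L-G(x)-\Psi\big)\le \ell(\ldots)\text{-free bound } G(x)-G(\bar\gamma_t)+\Psi .
\]
So we must show $G(x)-G(\bar\gamma_t)\le t+\Psi$. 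For this we use $G(x)-G(\bar\gamma_t)\ge \ell(x,\bar\gamma_t)$, which controls the wrong direction, so instead we bound $\ell(x,\bar\gamma_t)$ from above and $F(\bar\gamma_t)-F(x)$ from below in terms of it. A cleaner route is to compare with the proper time of $\bar\gamma$. We have
\[
F(\bar\gamma_t)-F(x)=\int_0^t g'(s)\,(\mathrm{md}^{-\delta})'(F(\bar\gamma_s))^{-1}\|\dot\gamma_{g(s)}\|^2\cdot(\ldots)\,ds .
\]
By (ii) of Theorem \ref{Theorem: local existence}, $\|\dot\gamma\|$ equals the local slope $\inf_{y\gg\gamma}\frac{f(y)-f(\gamma)}{\ell(\gamma,y)}$. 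This gives $\tfrac{d}{dt}F(\bar\gamma_t)=\|\dot{\bar\gamma}_t\|^2$ up to a factor $1+\Psi$, so it suffices to bound the slope of $F$ by $1+\Psi$ near $\bar\gamma_t$.

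That slope bound is where the Busemann sum enters. Take $y\gg z:=\bar\gamma_s$ with $\ell(z,y)$ small but of order $\sqrt{\Psi}$, so that the error $\Psi$ is absorbed. Then
\[
F(y)-F(z)\le G(z)-G(y)+\Psi,
\]
and the reverse triangle inequality gives $G(z)-G(y)\ge\ell(z,y)$; again this is the wrong side. The right choice is $y$ on a maximizer from $z$ to $\Upsilon_L$, where $G(z)-G(y)=\ell(z,y)$ exactly. Then
\[
\frac{F(y)-F(z)}{\ell(z,y)}\le 1+\frac{\Psi}{\ell(z,y)} .
\]
Choosing $\ell(z,y)=\sqrt{\Psi}$ makes the slope, and hence $\|\dot\gamma\|/(\mathrm{md}^{-\delta})'(F)$, at most $1+\sqrt\Psi$. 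Integrating over $[0,4]$ then gives the upper bound $t+\Psi$.

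The main obstacle is making this slope argument rigorous. It requires that (ii) holds with the infimum over $y\gg\gamma_t$, and that the chosen $y$ stays in $H$. It also requires converting the slope bound on $f$ into $\tfrac{d}{dt}F(\bar\gamma_t)\le 1+\Psi$ a.e., tracking the factor $(\mathrm{md}^{-\delta})'$ against $g'$, which agree up to $1+\Psi$ under $\delta L^4\to0$.
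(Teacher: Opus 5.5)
Your argument is correct. The lower bound is the paper's argument: steepness of $F$ gives $\|\dot\gamma_r\|_{\mathsf g}\ge(\mathrm{md}^{-\delta})'(F(\gamma_r))$, and $g'=1/(\mathrm{md}^{-\delta})'(F)$. Your final upper-bound argument, however, takes a genuinely different and shorter route.

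\emph{The paper's route.} The paper works infinitesimally at $x$. It takes the maximizer $\eta$ from $x$ back to $\Upsilon_{-L}$ and extends it a definite amount to the future, using triangle comparison and a convexity argument to keep it inside $I(\Upsilon_{-20},\Upsilon_{20})$. Convexity of $\mathrm{md}^{-\delta}\circ F$ and $\mathrm{md}^{-\delta}\circ G$ along $\eta$, together with the two-sided bound $|L^2-F^2/2-G^2/2|\le C$, then shows that the kink of $F\circ\eta$ at $x$ is $O(1/L)$. Finally, testing the subdifferential inclusion (i) against $-\dot\eta_0$ and applying reverse Cauchy--Schwarz bounds $\|\dot\gamma\|_{\mathsf g}$.

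\emph{Your route.} You test instead against a point $y$ at \emph{finite} proper time along the maximizer from $z$ to $\Upsilon_L$, where $G$ drops at exactly unit rate. You transfer this to $F$ using two facts:
\begin{itemize}
\item $F(z)+G(z)\ge 2L-\Psi$, from Proposition~\ref{Prop: Estimate on Busemann sum}, which is needed only at $z\in J(\Upsilon_{-15},\Upsilon_{15})$;
\item $F(y)+G(y)\le 2L$, which is just the reverse triangle inequality.
\end{itemize}
Because the step is finite, the additive error $\Psi$ is divided by $\ell(z,y)$ rather than by an infinitesimal.

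\emph{What each buys.} Your route exploits the global character of the slope in (ii), or equivalently of the subgradient inequality in (i) combined with reverse Cauchy--Schwarz for $u=\exp_z^{-1}(y)$. It avoids the geodesic extension, the second-difference estimate and the constant $C$ entirely. The paper's route uses the subgradient only infinitesimally, and as a by-product shows that $F$ is almost differentiable along $\eta$ at $x$.

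\emph{Your listed obstacles are routine.}
\begin{itemize}
\item $y\in H$, since $F(y)\ge F(z)$ and $G(y)=G(z)-\ell(z,y)$.
\item $\tfrac{d}{dt}F(\bar\gamma_t)=\|\dot{\bar\gamma}_t\|_{\mathsf g}^2$ holds exactly, not just up to a factor $1+\Psi$.
\item To convert the slope bound for $F$ into one for $f$, use $f(y)-f(z)\le(\mathrm{md}^{-\delta})'(F(y))\,(F(y)-F(z))$ together with $(\mathrm{md}^{-\delta})'(F(y))/(\mathrm{md}^{-\delta})'(F(z))=1+\Psi$.
\item Any fixed step, e.g.\ $\ell(z,y)=1$, works as well as $\sqrt{\Psi}$.
\end{itemize}
In a write-up, drop the abandoned first attempt and the garbled displays containing ``$(\ldots)$''.
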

\begin{proof}
By Lemma \ref{Lemma: Almost monotonicity of reparam grad flow}, the flow curves under consideration remain in
$I(\Upsilon_{-15},\Upsilon_{15})$ for $t\in[0,4]$.
We establish the following pointwise derivative estimates at
an arbitrary point $x\in I(\Upsilon_{-15},\Upsilon_{15})$.

    We proceed by bounding the right derivative $\frac{d}{dt}|^+_{t = 0} F(\bar \gamma_t)$ of $t \mapsto F(\bar \gamma_t)$ at $t = 0$. {The lower bound} is a consequence of $1$-steepness of $F$, {as we now show}:
    \begin{align*}
        \frac{d}{dt}\bigg|^+_{t = 0} F(\bar \gamma_t) \geq \frac{d}{dt}\bigg|^+_{t = 0} \ell(\bar \gamma_0,\bar \gamma_t) = g'(0) \frac{d}{dt}\bigg|^+_{t = 0} \ell(x, \gamma_t).
    \end{align*}
    Observe that the definition of gradient flow 
    {from Theorem \ref{Theorem: local existence}} and $1$-steepness of $F$ give
    \begin{align*}
        \ell(x,\gamma_t) \geq \int_{0}^t \|\dot \gamma_s\|_g \, ds \geq \int_0^t (\mathrm{md}^{-\delta})'(F(\gamma_s)) \, ds \geq t \, (\mathrm{md}^{-\delta})'(F(\gamma_0)).
    \end{align*}
    We conclude that
    \begin{equation*}
        \frac{d}{dt}\bigg|^+_{t = 0} \ell(x, \gamma_t) \geq (\mathrm{md}^{-\delta})'(F(x)).
    \end{equation*}
    Since $g'(0) = (\mathrm{md}^{-\delta})'(F(x))^{-1}$ by definition, it follows that
    \begin{equation*}
    \frac{d}{dt}\bigg|^+_{t = 0} F(\bar \gamma_t) \geq  1.
    \end{equation*}
    {Turning to the upper bound,} let $\eta:[0,F(x)] \to \mathcal M$ be a past directed maximizing $\mathsf g$-arclength parametrized timelike geodesic from $x$ to $\Upsilon_{-L}$. 

    It is possible to extend $\eta$ to a domain $[-1/4,F(x)]$, still
in $\mathsf g$-unit speed parametrization, such that
$\eta_{-1/4}\in I(\Upsilon_{-20},\Upsilon_{20})$.
To see this, smoothly extend $\eta$ by solving the geodesic equation
until its first intersection with $\partial J^-(\Upsilon_{20})$,
say at parameter $-\varepsilon$. Using triangle comparison for $\Delta(\Upsilon_{-L},\eta_{-\varepsilon},
\Upsilon_{20})$, together with the law of cosines, we obtain
\[
25 \le \ell(x,\Upsilon_{20})^2
\le \varepsilon\left[
\frac{(L+20)^2}{F(x)+\varepsilon}-F(x)
\right]+\Psi(\delta,L)
\le 80\varepsilon+\Psi(\delta,L)
\]
for sufficiently large $L$. Here $F(x)+\varepsilon$ is the length of the extended geodesic,
and the last inequality follows from $F(x)\ge L-15$. Thus $\varepsilon>1/4$. We now set $a:=1/1000$. Moreover, notice that
$\eta_a\in I(\Upsilon_{-20},\Upsilon_{20})$.
Indeed, the function
\[
f(t):=\operatorname{md}^{-\delta}
\bigl(\ell(\Upsilon_{-20},\eta_t)\bigr)
\]
is convex by Theorem \ref{Theorem: convexityoftimeseparation} as long as
$\eta_t\in I^+(\Upsilon_{-20})$. Since $x \in I(\Upsilon_{-15},\Upsilon_{15})$ and $\eta_{-1/4} \in I(\Upsilon_{-20},\Upsilon_{20})$, we can estimate
$f(0)\ge 25/2$ and $f(-1/4)\le 800+\Psi(\delta,L)$.
Using the convexity inequality at the points $-1/4,0,t$,
we therefore obtain
\[
f(t)\ge (1+4t)f(0)-4t f(-1/4)
\ge \frac{25}{2}-3150t-\Psi(\delta,L)>0
\qquad\text{for }0\le t\le a,
\]
provided $\delta,L^{-1}$ are small enough. Consequently,
$\eta_t$ cannot meet $\partial I^+(\Upsilon_{-20})$ in this
interval, and hence
$\eta([-a,a])\subset I(\Upsilon_{-20},\Upsilon_{20})$.
Restrict $\eta$ to $[-a,a]$. By convexity, the difference of right and left derivatives
can therefore be estimated as follows:
\begin{align*}
0 &\le
\left.\frac{d}{ds}\right|_{s=0}^{+}
(\operatorname{md}^{-\delta})(F(\eta_s))
-
\left.\frac{d}{ds}\right|_{s=0}^{-}
(\operatorname{md}^{-\delta})(F(\eta_s))
\\
&\le \frac{1}{a}\Big[
-2(\operatorname{md}^{-\delta})(F(x))
+(\operatorname{md}^{-\delta})(F(\eta_{-a}))
+(\operatorname{md}^{-\delta})(F(\eta_a))
\Big].
\end{align*}
Similarly, we obtain the analogous estimate for the difference
of right and left derivatives of
$(\operatorname{md}^{-\delta})(G(\eta_s))$:
\begin{align*}
0 &\le
\left.\frac{d}{ds}\right|_{s=0}^{+}
(\operatorname{md}^{-\delta})(G(\eta_s))
-
\left.\frac{d}{ds}\right|_{s=0}^{-}
(\operatorname{md}^{-\delta})(G(\eta_s))
\\
&\le \frac{1}{a}\Big[
-2(\operatorname{md}^{-\delta})(G(x))
+(\operatorname{md}^{-\delta})(G(\eta_{-a}))
+(\operatorname{md}^{-\delta})(G(\eta_a))
\Big].
\end{align*}
    Since $\eta$ is maximizing and unit speed from $x$ to $\Upsilon_{-L}$, we notice that
    \begin{align*}
     &\frac{d}{ds}\bigg|_{s = 0}^- (\mathrm{md}^{-\delta})(F(\eta_s)) = (\mathrm{md}^{-\delta})'(F(x)) \frac{d}{ds}\bigg|_{s = 0}^-F(\eta_s),
        \\&\frac{d}{ds}\bigg|_{s = 0}^+ (\mathrm{md}^{-\delta})(F(\eta_s)) = -(\mathrm{md}^{-\delta})'(F(x)).
    \end{align*}
It follows, by adding the nonnegative term involving $G$ to
the estimate for $F$, that
\begin{align*}
0 &\le
-(\operatorname{md}^{-\delta})'(F(x))
\left(1+\left.\frac{d}{ds}\right|_{s=0}^{-}F(\eta_s)\right)
\\
&\le \frac{1}{a}\Big[
-2\big[(\operatorname{md}^{-\delta})(F(x))
      +(\operatorname{md}^{-\delta})(G(x))\big]
\\
&\quad
+\big[(\operatorname{md}^{-\delta})(F(\eta_{-a}))
     +(\operatorname{md}^{-\delta})(G(\eta_{-a}))\big]
\\
&\quad
+\big[(\operatorname{md}^{-\delta})(F(\eta_a))
     +(\operatorname{md}^{-\delta})(G(\eta_a))\big]
\Big].
\end{align*}
Expanding at this point the terms with
$\operatorname{md}^{-\delta}$ for small $\delta$, we obtain
\begin{align*}
0 &\le
-(\operatorname{md}^{-\delta})'(F(x))
\left(1+\left.\frac{d}{ds}\right|_{s=0}^{-}F(\eta_s)\right)
\\
&\le \frac{1}{a}\Bigg[
-2\left(\frac{F(x)^2}{2}+\frac{G(x)^2}{2}\right)
\\
&\qquad\qquad
+\left(\frac{F(\eta_{-a})^2}{2}
      +\frac{G(\eta_{-a})^2}{2}\right)
+\left(\frac{F(\eta_a)^2}{2}
      +\frac{G(\eta_a)^2}{2}\right)
\Bigg]+\Psi.
\end{align*}
From Proposition \ref{Prop: Estimate on Busemann sum}, we know that
\[
-C\le L^2-\frac{F(z)^2}{2}-\frac{G(z)^2}{2}\le C
\qquad\text{for }z\in J(\Upsilon_{-20},\Upsilon_{20}),
\]
for some positive constant $C$ independent of $\delta,L$.
The expression in brackets is therefore bounded above by $4C$,
so the preceding right-hand side is bounded above by
$4C/a+\Psi$. Since $a$ is fixed, it follows that the difference
of right and left derivatives of
$\operatorname{md}^{-\delta}\circ F\circ\eta$ is bounded below
by $0$ and above by a constant independent of $\delta,L$. Thus, dividing by the large and growing term $(\mathrm{md}^{-\delta})'(F(x)) = F(x) + \Psi$, we obtain
    \begin{equation*}
        1 \leq {-} \frac{d}{ds}\bigg|_{s = 0}^-F(\eta_s) \leq 1 + \Psi.
    \end{equation*}
From here suppose that the properties $i)$ and $ii)$ of Theorem \ref{Theorem: local existence} hold at $t=0$. This assumption is allowed because it holds almost every time, and we will use it just to obtain an upper bound for the right derivative of $F(\overline{\gamma}_t)$, that we will integrate to obtain the conclusion. Let $v:= \dot \gamma_0$, hence $v^{\flat} \in \partial_\leq \, \mathrm{md}^{-\delta} \circ F(x)$. The defining properties of the gradient flow $\gamma_t$ and its reparameterization $\bar \gamma_t$, the fact that the geodesic $\eta_s$ is past directed and unit speed, and the reverse Cauchy--Schwarz inequality yields
\begin{equation*}
    (\mathrm{md}^{-\delta})' (F(x)) ( 1+ \Psi) \geq - \frac{d}{ds}\bigg|_{s = 0}^-\mathrm{md}^{-\delta} \circ F (\eta_s) \geq v^{\flat} [-\dot \eta_0] \geq \| v\|_{\mathsf g} \, ,
\end{equation*}
and, moreover, we have
\[
\| v\|_{\mathsf g} = (\mathrm{md}^{-\delta})' (F(x)) \, \sqrt{\frac{d}{dt}\bigg|_{t = 0}^+ F(\bar \gamma_t)} \, .
\]
Hence, we get
    \begin{equation*}
        1 + \Psi \geq - \frac{d}{ds}\bigg|_{s = 0}^-F(\eta_s) \geq  \sqrt{\frac{d}{dt}\bigg|_{t = 0}^+ F(\bar \gamma_t)}.
    \end{equation*}
    By arbitrariness of the point and uniqueness of gradient flows, these estimates hold {uniformly} for almost any $t \in [0,4]$, and the claim follows from the fundamental theorem of calculus.
\end{proof}

\begin{Definition}[The future and past flow maps]
Let $(\mathcal M,\mathsf g)$ be globally hyperbolic with timelike sectional
curvature bounded below by $-\delta\leq0$, and let
$\Upsilon:[-L,L]\to \mathcal M$ be a maximizing unit speed timelike
geodesic. Given $x\in H $ (where $H$ is as in \eqref{eq: H}), define
$t\mapsto\Fl^{+}_t(x)$ to be the reparametrized flow
$\bar\gamma_t=\gamma_{g(t)}$ of the function
$F=\ell(\Upsilon_{-L},\cdot)$, which in turn arises from the
gradient flow of the causally convex function
$\operatorname{md}^{-\delta}\circ F$ starting at $x$ and
maximal in $H$.
Similarly, we define $t\mapsto\Fl^{-}_t(x)$ as the
past-directed reparametrized flow of
$G=\ell(\cdot,\Upsilon_L)$, arising from the gradient flow of
$\operatorname{md}^{-\delta}\circ G$ maximal in $H$.
Both flow maps are defined on the respective maximal time
intervals of their reparametrizations.
\end{Definition}

Assuming $\delta \ll L^{-4} \ll 1$ as usual, it follows from Lemma \ref{Lemma: Almost monotonicity of reparam grad flow} (and its analogue for $G$) that $\{\Fl^\pm_t(x)\}_{t \in [0,4]}$ is well-defined for all $x \in I(\Upsilon_{-10},\Upsilon_{10})$ and stays in $I(\Upsilon_{-15},\Upsilon_{15})$.

\begin{Proposition}[Almost Pythagoras formula]
\label{Prop: almost pythagoras flow formula}
Let $x,y \in J(\Upsilon_{-1},\Upsilon_1)$ such that $x \leq y$ and let $s,t \in [0,2]$ be such that $\Fl^+_s(x) \leq \Fl^+_t(y)$. Then
\begin{equation}\label{future Pythagoras}
\ell(\Fl^+_s(x),\Fl^+_t(y))^2 = \ell(x,y)^2 + (t-s)^2 + 2(t-s) (F(y) - F(x)) + \Psi(\delta,L).
\end{equation}
If instead $x \geq y$ and $s,t \in [0,2]$ such that $\Fl^-_s(x) \geq \Fl^-_t(y)$, then
\begin{equation}\label{past Pythagoras}
    \ell(\Fl^-_t(y),\Fl^-_s(x))^2 = \ell(y,x)^2 + (t-s)^2 + 2(t-s) (G(y) - G(x)) + \Psi(\delta,L).
\end{equation}
\end{Proposition}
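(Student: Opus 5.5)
The equality \eqref{future Pythagoras} has two halves. The lower bound ``$\geq$'' will be read off from Proposition \ref{Prop: second estimate for reparametrized flow}. The upper bound ``$\leq$'' will come from running the same kind of estimate backwards, using the past flow $\Fl^-$ as an approximate inverse of $\Fl^+$. The past formula \eqref{past Pythagoras} follows by time reversal, exchanging the roles of $F,\Fl^+$ and $G,\Fl^-$, so I concentrate on \eqref{future Pythagoras}.

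\emph{Lower bound.} Since $x,y\in J(\Upsilon_{-1},\Upsilon_1)$, we can write $x=\gamma^1_0$ and $y=\gamma^2_0$ for gradient flow curves of $\mathrm{md}^{-\delta}\circ F$, with $\gamma^1_0\le\gamma^2_0$. For $s,t\in[0,2]$, Lemma \ref{Lemma: Almost monotonicity of reparam grad flow} keeps the reparametrized curves inside $I(\Upsilon_{-15},\Upsilon_{15})$, so Proposition \ref{Prop: second estimate for reparametrized flow} applies. (If $x,y$ lie on $\partial I(\Upsilon_{-1},\Upsilon_1)$, I first approximate from inside and use the stability part of Theorem \ref{Theorem: local existence}.)
\begin{itemize}
\item If $s\le t$, that proposition gives exactly $\ell(\Fl^+_s(x),\Fl^+_t(y))^2\ge \ell(x,y)^2+(t-s)^2+2(t-s)(F(y)-F(x))+\Psi$.
\item If $s>t$, the proposition also needs $\Fl^+_{s-t}(x)\le y$. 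I expect this to follow from the hypothesis $\Fl^+_s(x)\le\Fl^+_t(y)$ together with the synchronous monotonicity of Proposition \ref{Prop: first expansion rate estimate}. If that fails, the right-hand side is anyway at most $\Psi$, and the inequality can be handled directly.
\end{itemize}

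\emph{Upper bound.} The idea is that $\Fl^-_r$ almost undoes $\Fl^+_r$. By Proposition \ref{Prop: Estimate on Busemann sum} we have $F+G=2L-\Psi$ on $J(\Upsilon_{-20},\Upsilon_{20})$. By Proposition \ref{Proposition: Almost linearity} and its $G$-analogue, $F$ increases at unit rate along $\Fl^+$ and $G$ increases at unit rate along $\Fl^-$, up to $\Psi$.
\begin{enumerate}
\item Set $x':=\Fl^+_s(x)$ and $y':=\Fl^+_t(y)$. From almost linearity, $F(x')=F(x)+s+\Psi$ and $F(y')=F(y)+t+\Psi$.
\item Show that $\Fl^-_s(x')$ is $\Psi$-close to $x$ in the time separation. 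The lower bound above gives $\ell(x,x')\ge s-\Psi$, and the reverse triangle inequality with $F$ gives $\ell(x,x')\le F(x')-F(x)=s+\Psi$. So $x'$ lies almost on a maximizer from $x$ in the gradient direction, and going back by $\Fl^-_s$ should return near $x$. I would prove this by applying the $G$-version of the lower bound to the pair $(x, x')$, together with the Busemann sum estimate, so that every relevant time separation to points of the diamond changes by at most $\Psi$.
\item Apply the past-flow lower bound (the $G$-analogue of Proposition \ref{Prop: second estimate for reparametrized flow}) to the pair $y'\ge x'$ with times $t,s$. This gives
\[\ell(\Fl^-_s(x'),\Fl^-_t(y'))^2\ge \ell(x',y')^2+(t-s)^2+2(t-s)(G(x')-G(y'))+\Psi.\]
\item Use $G(x')-G(y')=F(y')-F(x')+\Psi=F(y)-F(x)+t-s+\Psi$. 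After expanding, the right-hand side becomes $\ell(x',y')^2+3(t-s)^2+2(t-s)(F(y)-F(x))+\Psi$. By step 2 the left-hand side equals $\ell(x,y)^2+\Psi$.
\item Rearranging gives $\ell(x',y')^2\le \ell(x,y)^2-3(t-s)^2-2(t-s)(F(y)-F(x))+\Psi$. This has the wrong coefficients compared with \eqref{future Pythagoras}. The fix I would try first is to plug the lower bound for $\ell(x',y')^2$ back into the same chain, so that the quadratic terms in $t-s$ cancel correctly. If the bookkeeping still does not close, I would instead compare directly with the $\mathbb L^2(-\delta)$ law of cosines (Proposition \ref{Thm : Law of cosines}) in triangles with a vertex at $\Upsilon_{-L}$ or $\Upsilon_L$.
\end{enumerate}

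\emph{Main obstacle.} The hardest part is the upper bound, specifically step 2: controlling how far $\Fl^-_s\circ\Fl^+_s$ is from the identity, uniformly in the time separation. The available estimates are all one-sided: expansion estimates, steepness, and subdifferential monotonicity. Making $\Fl^-$ act as a genuine approximate inverse will require combining the Busemann sum bound with almost linearity of both $F$ and $G$ along both flows, while keeping every error of the form $\Psi(\delta,L)$.
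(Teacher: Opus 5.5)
You have a genuine gap in the upper bound. Your lower bound for $s\le t$ matches the paper. Part of what you diagnose in step 5 as failing bookkeeping is actually a sign error. In step 3 the earlier point $x'$ is flowed back by $s$ and the later point $y'$ by $t$. Compare with \eqref{past Pythagoras}, or check in a flat product: the estimate reads
\[
\ell(\Fl^-_s(x'),\Fl^-_t(y'))^2\ge \ell(x',y')^2+(s-t)^2+2(s-t)\bigl(G(x')-G(y')\bigr)+\Psi ,
\]
with $2(s-t)$, not $2(t-s)$. Substituting $G(x')-G(y')=F(y)-F(x)+t-s+\Psi$ gives $\ell(x',y')^2-(t-s)^2-2(t-s)(F(y)-F(x))+\Psi$. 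The coefficients then close exactly: the upper bound in \eqref{future Pythagoras} follows once you have the round-trip bound $\ell(\Fl^-_s\Fl^+_s(x),\Fl^-_t\Fl^+_t(y))^2\le\ell(x,y)^2+\Psi$.

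So everything rests on your step 2, which you do not prove, and this is the whole content of the proposition. The paper obtains the approximate-inverse property only as a \emph{consequence} of the Pythagoras formula (Remark \ref{Remark: flows are inverse}). The one-sided facts you list ($\ell(x,x')=s+\Psi$, steepness, the Busemann sum) give no two-sided control of time separations after a round trip.

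The paper avoids needing two-sided control by moving one point at a time.

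\textbf{Earlier point (Step 1, $t=0$).} Do not stop $\Fl^-$ at time $s$. Stop it at the first time $\bar t$ at which $r\mapsto\Fl^-_r(\Fl^+_s(x))$ hits $\partial J^+(x)$. The past lower bound gives $\ell(x,\Fl^-_r(\Fl^+_s(x)))^2\ge(r-s)^2+\Psi$, so $\bar t=s+\Psi$. At $w:=\Fl^-_{\bar t}(\Fl^+_s(x))$ you have $x\le w$ and $\ell(x,w)=0$. The reverse triangle inequality then gives the one-sided bound $\ell(w,y)\le\ell(x,y)$, and squeezing this against the chained lower bound forces equality up to $\Psi$.

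\textbf{Later point (Step 2, $s=0$).} The same trick points the wrong way here. The contact point $w$ of $\Fl^-_r(\Fl^+_t(y))$ with $\partial J^+(y)$ satisfies $y\le w$, which only gives $\ell(x,w)\ge\ell(x,y)$. The paper therefore uses a separate differential ingredient, for which your proposal has no substitute:
\begin{enumerate}
\item Take the maximizer $\sigma$ from $x$ to $q$ and extend it slightly to $\sigma_{1+h}$.
\item Test the causal subdifferential of $U=\mathrm{md}^{-\delta}\circ F$ at $q$ along $\pm\dot\sigma_1$.
\item Combine this with convexity of $V=\mathrm{md}^{-\delta}\circ G$ along $\sigma$ and the bound $|U+V-L^2|\le C_0$.
\end{enumerate}
This yields the two-sided estimate $\mathsf g(u,\dot\sigma_1)=F(q)-F(x)+\Psi$ for the reparametrized flow velocity $u$. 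Hence $\frac{d}{dr}\frac12\ell(x,\Fl^+_r(y))^2=F(y)-F(x)+r+\Psi$, which integrates to the $s=0$ case.

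\textbf{The case $s>t$ in your lower bound.} The synchronous estimate propagates causal order only forward in time. It therefore does not give $\Fl^+_{s-t}(x)\le y$ from $\Fl^+_s(x)\le\Fl^+_t(y)$. Your fallback ``the right-hand side is at most $\Psi$'' is also unproved. The paper avoids this by composition: Step 1 applied to the pair $(x,\Fl^+_t(y))$ needs only the hypothesis $\Fl^+_s(x)\le\Fl^+_t(y)$.
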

\begin{proof}
    We obtain the formula involving $\Fl^+$ below. The derivation of the formula involving $\Fl^-$ is analogous. In Steps 1 and 2, we prove the assertions for initial points
$x,y\in J(\Upsilon_{-4},\Upsilon_4)$, with the same time bounds
and causal assumptions. This permits their application to the
flowed points occurring in Step 3.

    \begin{enumerate}
        \item \textbf{STEP 1: $t = 0$.} The inequality $\geq$ is a consequence of Proposition \ref{Prop: second estimate for reparametrized flow}. Consider now $[0,4] \ni t \mapsto \Fl^-_t(\Fl^+_s(x))$. Using the time-reversed one-sided case of Proposition \ref{Prop: second estimate for reparametrized flow} for $\ell(\Fl^-_t(\Fl^+_s(x)),y)^2$, in conjunction with the above noted inequality $\geq$ in our claim,
        we obtain
        \begin{align*}
            \ell(\Fl^-_t(\Fl^+_s(x)),y)^2 &\geq \ell(\Fl^+_s(x),y)^2 + t^2 + 2t(G(\Fl^+_s(x)) - G(y)) + \Psi\\
            &\geq \ell(x,y)^2 + s^2 - 2s(F(y) - F(x)) 
            + t^2 + 2t(G(\Fl^+_s(x)) - G(y)) + \Psi.
        \end{align*}
        From $F = {2L -}G + \Psi$ (Proposition \ref{Prop: Estimate on Busemann sum}) as well as the almost linearity provided by Proposition \ref{Proposition: Almost linearity},
        we continue estimating 
        \begin{align}\nonumber
            \ell(\Fl^-_t(\Fl^+_s(x)),y)^2 &\geq \ell(x,y)^2 + s^2 -2s(F(y) - F(x)) 
            +t^2 +2t(-F(x) - s + F(y)) + \Psi\\
            &= \ell(x,y)^2 + (t-s)^2 + 2(t-s)(F(y) - F(x)) + \Psi.
        \label{3.41bis}\end{align}
        Similarly, we may estimate for $t > 0$ and $x \leq \Fl^-_t(\Fl^+_s(x))$ that
        \begin{align}
            \ell(x,\Fl^-_t(\Fl^+_s(x)))^2 \geq (t-s)^2 + \Psi. \label{eq3.41}
        \end{align}
        The case $s=0$ is immediate, so suppose $s>0$ and put
$q:=\Fl^+_s(x)$. By $1$-steepness of $F$ and the
lower bound $1$ for the Lorentzian speed of the past flow,
\begin{equation*}
F(\Fl^-_r(q))\le F(q)-r.
\end{equation*}
Consequently, this curve cannot remain in $I^+(x)$ beyond
$r=F(q)-F(x)=s+\Psi<3$. Since $q\in J(x,y)$, its existence
up to this time follows from Lemma \ref{Lemma: Almost monotonicity of reparam grad flow}. Thus there is a unique
first contact time $\bar t\in[0,3]$ such that
$\Fl^-_{\bar t}(q)\in\partial J^+(x)$ and
$\ell(x,\Fl^-_{\bar t}(q))=0$.
From the above estimate \eqref{eq3.41}, we obtain that
        \begin{align*}
            0 \geq (\bar t - s)^2 + \Psi \geq \Psi
        \end{align*}
        Subtracting $\Psi$ from both sides and taking the square root we get
        \begin{align*}
            \bar t = s + \Psi.
        \end{align*}
        Thus, the reverse triangle inequality in conjunction with our estimate \eqref{3.41bis} produces
        \begin{align*}
            \ell(x,y)^2 &\geq  (\underbrace{\ell(x,\Fl^-_{\bar t}(\Fl^+_s(x)))}_{=0} + \ell(\Fl^-_{\bar t}(\Fl^+_s(x)), y))^2 \\
            &\geq \ell(x,y)^2 + (\bar t -s)^2 + 2(\bar t - s) (F(y) - F(x)) + \Psi.
        \end{align*}
        Given that $\bar t=s+\Psi$ and $|F(y)-F(x)|\le8$,
the right-hand side is $\ell(x,y)^2+\Psi$. Thus, all of the inequalities have to be equalities, so that in particular
        \begin{align*}
        \ell(\Fl^+_s(x),y)^2 = \ell(x,y)^2 + s^2 - 2s(F(y) - F(x)) + \Psi.
        \end{align*}
        \item \textbf{STEP 2: $s = 0$.} Let $y_r:=\mathrm{Fl}^+_r(y)$, and first assume $x\ll y$.
By Lemma \ref{Lemma: Almost monotonicity of reparam grad flow}, the points under consideration lie in
$I(\Upsilon_{-15},\Upsilon_{15})$.
During the course of this step only, we write
\begin{equation*}
 U:=\operatorname{md}^{-\delta}\circ F,
 \qquad
 V:=\operatorname{md}^{-\delta}\circ G,
 \qquad
 D(q):=(\operatorname{md}^{-\delta})'(F(q)).
\end{equation*}
By Proposition \ref{Prop: Estimate on Busemann sum},
\begin{equation*}
 |U+V-L^2|\leq C_0
 \quad\text{on }J(\Upsilon_{-20},\Upsilon_{20}),
\end{equation*}
where $C_0$ is independent of $\delta,L$, and
\begin{equation*}
 U=\frac{F^2}{2}+\Psi,\qquad
 V=\frac{G^2}{2}+\Psi,\qquad
 D=F+\Psi,\qquad F+G=2L+\Psi.
\end{equation*}
We first establish an endpoint estimate.
Fix $x\ll q$ in $I(\Upsilon_{-15},\Upsilon_{15})$ and an
affinely parametrized maximizing timelike geodesic
$\sigma:[0,1]\to M$ from $x$ to $q$.
We claim that its (not necessarily maximizing) geodesic extension is defined on $[0,1+h]$, where
$h:=1/1000$, and remains in
$I(\Upsilon_{-20},\Upsilon_{20})$. This is shown via a convexity argument, cf.\ the proof of Proposition \ref{Proposition: Almost linearity}.

Put $w:=\dot\sigma_1$ and $z:=\sigma_{1+h}$, and suppose $u \in T_q \mathcal M$ satisfies
$D(q)u^\flat\in\partial_{\le}U(q)$ (this will subsequently be applied to $u = \dot y_r$).
Testing the causal subdifferential inequality with $-w$ and $hw$
gives
\begin{equation*}
 U(q)-U(x)
 \le D(q) \mathsf g(u,w)
 \le\frac{U(z)-U(q)}h.
\end{equation*}
Convexity of $V\circ\sigma$ gives
\begin{equation*}
 V(z)-V(q)\ge h\bigl(V(q)-V(x)\bigr).
\end{equation*}
Together with the bound on $U+V$, these inequalities imply
\begin{equation*}
 \frac{U(q)-U(x)}{D(q)}
 \le \mathsf g(u,w)
 \le
 \frac{V(x)-V(q)}{D(q)}+\frac{2C_0}{hD(q)}.
\end{equation*}
Since $F,G\in[L-15,L+15]$ at $x,q$, the preceding
asymptotic estimates yield
\begin{equation*}
 \frac{U(q)-U(x)}{D(q)}=F(q)-F(x)+\Psi,
 \qquad
 \frac{V(x)-V(q)}{D(q)}=F(q)-F(x)+\Psi.
\end{equation*}
Moreover, $2C_0/(hD(q))=O(L^{-1})=\Psi$.
We have therefore proved that
\begin{equation*}
 \mathsf g(u,\dot\sigma_1)=F(q)-F(x)+\Psi.
\end{equation*}
For almost every $r\in(0,t)$, $y_r$ is differentiable
and satisfies
\begin{equation*}
 D(y_r)\dot y_r^\flat\in\partial_{\le}U(y_r).
\end{equation*}
Fix such an $r$, and let $\sigma:[0,1]\to M$ be an affinely
parametrized maximizing geodesic from $x$ to $y_r$.
Lemma \ref{L:gradient estimate} gives
\begin{equation*}
 \left.\frac{d^+}{ds}\right|_{s=r}
 \frac{\ell(x,y_s)^2}{2}
 \ge \mathsf g(\dot y_r,\dot\sigma_1).
\end{equation*}
For the corresponding left Dini derivative, choose $\rho<1$
sufficiently close to $1$ so that
$p\mapsto\ell(\sigma_\rho,p)$ is smooth near $y_r$.
For sufficiently small $\varepsilon>0$, the reverse triangle
inequality gives
\begin{equation*}
 \ell(x,y_{r-\varepsilon})
 \ge
 \ell(x,\sigma_\rho)+\ell(\sigma_\rho,y_{r-\varepsilon}),
\end{equation*}
whereas maximality of $\sigma$ gives
\begin{equation*}
 \ell(x,y_r)
 =
 \ell(x,\sigma_\rho)+\ell(\sigma_\rho,y_r).
\end{equation*}
Squaring, subtracting the former inequality from the latter
equality, and dividing by $2\varepsilon$, the usual first
variation formula yields
\begin{equation*}
 \left.\frac{d^-}{ds}\right|_{s=r}
 \frac{\ell(x,y_s)^2}{2}
 \le \mathsf g(\dot y_r,\dot\sigma_1).
\end{equation*}
Since $x\ll y$, the function
$r\mapsto\ell(x,y_r)^2/2$ is locally Lipschitz.
Consequently, for almost every $r$, its left and right Dini derivatives both agree with its ordinary derivative.
From the inequalities established above and
Proposition \ref{Proposition: Almost linearity} we can thus conclude that
\begin{equation*}
 \frac{d}{dr}\frac{\ell(x,y_r)^2}{2}
 =
 g(\dot y_r,\dot\sigma_1)
 =
 F(y_r)-F(x)+\Psi
 =
 F(y)-F(x)+r+\Psi
\end{equation*}
for almost every $r\in(0,t)$. Integrating from $0$ to $t$, we get
\begin{equation*}
 \ell(x,\mathrm{Fl}^+_t(y))^2
 =
 \ell(x,y)^2+t^2+2t\bigl(F(y)-F(x)\bigr)+\Psi.
\end{equation*}
The case $x\leq y$ but $x \not \ll y$ is argued by approximation.
        
        \item \textbf{STEP 3: General case.} If both $s, t \neq 0$, then one can simply proceed in two steps using steps 1 and 2 as follows:
        \begin{align*}
            \ell(\Fl^+_s(x),\Fl^+_t(y))^2 &= \ell(x,\Fl^+_t(y))^2 + s^2 - 2s(\underbrace{F(\Fl^+_t(y))}_{= F(y) + t + \Psi} - F(x)) + \Psi\\
            &= \ell(x,y)^2 + t^2 + s^2 - 2s(F(y) + t  - F(x)) + 2t(F(y) - F(x)) + \Psi\\
            &= \ell(x,y)^2 + (t-s)^2 + 2(t-s)(F(y) - F(x)) + \Psi.
        \end{align*}
    \end{enumerate}
\end{proof}

By rescaling, the same formulas \eqref{future Pythagoras} and \eqref{past Pythagoras} hold for every fixed $R>0$,
with initial points $x,y\in J(\Upsilon_{-R},\Upsilon_R)$
and times $s,t\in[0,2R]$, under the same causal assumptions
and provided the relevant flow segments are defined in $H$.
The error then has the form $\Psi(\delta,L\mid R)$. This will be used in a technical step in the proof of the main theorem.

\begin{Proposition}[Flow metrization of a spacelike slice]
\label{Prop: flow metrization}
Let $X:=J(\Upsilon_{-1},\Upsilon_1) \cap \{b_L^+ = 0\}$ with $b_L^+$ from \eqref{b^+_L} and for $x,y \in X$ define
\begin{equation}
\label{eq: def tilde d}
    \tilde {\mathsf{d}}(x,y):= \min\{t \in [0,4] : \Fl^+_t(x) \geq y \text{ and } \Fl^+_t(y) \geq x\}.
\end{equation}
Then there exists $0 \leq \theta \leq  \Psi$ such that the {infimum} in the definition of $\tilde {\mathsf{d}}$ can be taken on $[0,2+\theta]$. Moreover, defining $\mathsf d:=\tilde{\mathsf {d}} + \theta$ on $X^2 \setminus \{(x,x) : x \in X\}$ and $\mathsf{d}(x,x):=\tilde{\mathsf{d}}(x,x) = 0$, the pair $(X,\mathsf{d})$ is a metric space. Finally, if $x,y \in X$, then
\begin{equation*}
    \ell(x,\Fl^+_{\tilde{\mathsf{d}}(x,y)}(y)) = 0 \quad \text{or} \quad \ell(y,\Fl^+_{\tilde{\mathsf{d}}(x,y)}(x)) = 0.
\end{equation*}
\end{Proposition}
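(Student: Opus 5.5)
The plan is to prove the statement in three parts: the minimum in \eqref{eq: def tilde d} is attained at some $t\le 2+\theta$; $\mathsf d$ is a metric, where the only real work is the triangle inequality; and the ``null contact'' property at $t=\tilde{\mathsf d}(x,y)$.

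\emph{Existence and location of the minimum.} For $x\in X$, Lemma \ref{Lemma: Almost monotonicity of reparam grad flow} and the remark after the definition of the flow maps show that $t\mapsto \Fl^+_t(x)$ is defined on $[0,4]$. Since it is a future causal curve, the set $A_x(y):=\{t\in[0,4]: \Fl^+_t(x)\ge y\}$ is an up-set: if $t\in A_x(y)$ and $t'\ge t$, then $\Fl^+_{t'}(x)\ge \Fl^+_t(x)\ge y$. It is also closed, because the flow is continuous and $\le$ is closed in a globally hyperbolic spacetime. Hence $A_x(y)\cap A_y(x)$ is a closed interval $[t_0,4]$ whenever it is nonempty, and the minimum exists.

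To see that it is nonempty with $t_0\le 2+\theta$, apply Lemma \ref{Lemma: Almost monotonicity of reparam grad flow} with $\tilde R=-1$, $R=1$. It gives $\Fl^+_t(x)\in J(\Upsilon_{-1+t+\Psi},\Upsilon_{1+t+\Psi})$, so for $t=2+\theta$ with $\theta:=|\Psi|$ we get $\Fl^+_t(x)\ge \Upsilon_{1}\ge y$, and symmetrically $\Fl^+_t(y)\ge x$. We are free to enlarge $\theta$ later, still keeping $\theta\le\Psi$.

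\emph{Metric axioms.} Symmetry is built into \eqref{eq: def tilde d}. If $\tilde{\mathsf d}(x,y)=0$, then $x\ge y$ and $y\ge x$, so $x=y$ because a globally hyperbolic spacetime has no closed causal curves. Hence $\mathsf d>0$ off the diagonal.

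For the triangle inequality, first note that $\Fl^+$ has the semigroup property $\Fl^+_{a+b}=\Fl^+_b\circ\Fl^+_a$. This holds because the gradient flow of $\mathrm{md}^{-\delta}\circ F$ is unique (Theorem \ref{Theorem: local existence}) and the reparametrization ODE \eqref{eq: reparametrization} is autonomous in the current point.

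The difficulty is that reparametrized flows need not preserve $\le$ exactly: each curve carries its own $g$. Proposition \ref{Prop: first expansion rate estimate} gives exact order preservation only for the unreparametrized flow. Combining it with the two-sided bounds \eqref{eq: bounds on g} on $g_1,g_2$, I would show the following almost-order preservation: if $p\le q$ in $J(\Upsilon_{-4},\Upsilon_4)$, then $\Fl^+_{b+\eta}(p)\ge \Fl^+_b(q)$ with $\eta=\Psi$. In unreparametrized time, $g_q(b)\le g_p(b+\eta)$ once $\eta$ absorbs the $\Psi/L$-sized discrepancy between the bounds in \eqref{eq: bounds on g}. The monotone flow then gives $\gamma^p_{g_p(b+\eta)}\ge\gamma^p_{g_q(b)}\ge \gamma^q_{g_q(b)}$.

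With $a=\tilde{\mathsf d}(x,y)$ and $b=\tilde{\mathsf d}(y,z)$, this yields
\[
\Fl^+_{a+b+\eta}(x)=\Fl^+_{b+\eta}(\Fl^+_a(x))\ge \Fl^+_b(y)\ge z,
\]
and symmetrically with the roles of $x$ and $z$ exchanged. Hence $\tilde{\mathsf d}(x,z)\le a+b+\eta$; when $a+b+\eta>4$ the bound is automatic, since $\tilde{\mathsf d}\le 2+\theta$. Choosing $\theta\ge\eta$ gives $\mathsf d(x,z)\le \mathsf d(x,y)+\mathsf d(y,z)$ for pairwise distinct points. The cases with coincident points are trivial.

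I expect this almost-order-preservation step to be the main obstacle, since it requires making the $\Psi$ in \eqref{eq: bounds on g} uniform over initial points in $J(\Upsilon_{-4},\Upsilon_4)$. If that route fails, I would instead use the Pythagoras formula (Proposition \ref{Prop: almost pythagoras flow formula}) to control $\ell(\Fl^+_b(q),\Fl^+_{b+\eta}(p))^2$ from below.

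\emph{Null contact.} Let $t=\tilde{\mathsf d}(x,y)$. If $t=0$ then $x=y$ and $\ell(x,x)=0$. If $t>0$ and both $x\ll\Fl^+_t(y)$ and $y\ll\Fl^+_t(x)$ held, then openness of $\ll$ and continuity of the flows would give the same relations at some $t'<t$, contradicting minimality. So one of the two relations fails; since the corresponding causal relation $\le$ holds at $t$, the time separation there is $0$.
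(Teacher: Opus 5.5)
Your proof is correct, but your triangle inequality takes a different route from the paper's. The paper sets $T=\tilde{\mathsf d}(x,y)$ and $S=\tilde{\mathsf d}(y,z)$ and applies the asynchronous estimate of Proposition \ref{Prop: second estimate for reparametrized flow} to the pair $y\le\Fl^+_S(z)$. Using $F(y)=F(z)=L$ and almost linearity, this gives $\ell(\Fl^+_T(y),\Fl^+_{S+\xi}(z))^2\ge(\xi-T)^2+2(\xi-T)S+\Psi$. It then runs a first-contact argument: at the first $\xi_0$ with $\Fl^+_{S+\xi_0}(z)\ge\Fl^+_T(y)$ the left side vanishes, which is impossible if $\xi_0>T+\theta$ with $\theta^2$ dominating the error. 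So the paper's $\theta$ ends up of size about $\sqrt{|\Psi|}$.

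You bypass Proposition \ref{Prop: second estimate for reparametrized flow} entirely. Exact order preservation of the unreparametrized flow (Proposition \ref{Prop: first expansion rate estimate}, used as in Lemma \ref{lem: monotonicity 1}), the two-sided bounds \eqref{eq: bounds on g}, and the semigroup property together give a time-shifted order preservation. Two remarks on that step:
\begin{enumerate}
\item Its hypothesis is stated backwards. It should read ``if $q\le p$, then $\Fl^+_{b+\eta}(p)\ge\Fl^+_b(q)$''. That is what your justification ($\gamma^p_s\ge\gamma^q_s$) and your application (with $q=y\le p=\Fl^+_a(x)$) actually use.
\item The uniformity you worry about is automatic, so no fallback is needed. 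The $\Psi$ in \eqref{eq: bounds on g} depends only on $\delta$, $L$ and the fixed window. Taking $\tilde R=-4$, $R=4$, you need $\frac{b+\Psi}{L-4}\le\frac{b+\eta+\Psi}{L+4}$, i.e.\ $\eta=\frac{8b}{L-4}+\Psi$, which is of type $\Psi$ for $b\le 4$.
\end{enumerate}

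What your route buys:
\begin{enumerate}
\item It is more elementary: there are no causal hypotheses of Proposition \ref{Prop: second estimate for reparametrized flow} to verify.
\item It gives a shift linear rather than square-root in the underlying errors.
\item It does not use that $x,y,z$ lie on $\{b_L^+=0\}$, nor almost linearity. So the triangle inequality holds for this flow distance on any subset of a fixed diamond.
\item You make explicit several points the paper leaves implicit: the semigroup property, the closed up-set argument for attainment of the minimum, and positivity off the diagonal via causality.
\end{enumerate}

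What the paper's route buys: it stays inside the Pythagoras-type quantitative framework it reuses immediately afterwards (almost symmetry and Corollary \ref{Corollary: mixed flows}). It also avoids bookkeeping of the individual reparametrizations $g_p,g_q$.
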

\begin{proof}
    Fix $x,y \in X$. Since $y \in J(\Upsilon_{-1},\Upsilon_1)$, by Lemma \ref{Lemma: Almost monotonicity of reparam grad flow}, $\Fl^+_t(y) \in J(\Upsilon_{-1 + t + \Psi}, \Upsilon_{1 + t + \Psi})$. By a continuity argument we may obtain a $\theta \geq 0$ of type $\Psi$ such that, if $t \geq 2 + \theta$, then $\Fl^+_t(y) \geq x$ as well as $\Fl^+_t(x) \geq y$.

    Let us show that $\mathsf d$ is a metric on $X$. Clearly, $\tilde {\mathsf{d}}(x,x) = \mathsf{d}(x,x) = 0$. Symmetry of $\tilde {\mathsf{d}}$ (and thus of $\mathsf{d}$) follows from the definition. It remains to argue the triangle inequality for $\mathsf{d}$, for which it is sufficient to show that the triangle inequality holds for $\tilde{\mathsf{d}}$ up to an addition of $\theta$. To this end, let $x,y,z \in X$, define $T:= \tilde{\mathsf{d}}(x,y)$, $S:=\tilde{\mathsf{d}}(y,z)$. By definition,
    \begin{equation*}
        \Fl^+_T(x) \geq y, \, \Fl^+_T(y) \geq x, \, \Fl^+_S(z) \geq y, \, \Fl^+_S(y) \geq z.
    \end{equation*}
    For sufficiently large $\xi$, certainly $\Fl^+_{S+ \xi}(z) \geq \Fl^+_T(y)$. Using Proposition \ref{Prop: second estimate for reparametrized flow}, after a fixed rescaling if necessary,
and noting that $F(z)=L=F(y)$, we obtain  
    \begin{align*}
        \ell(\Fl^+_{T}(y),\Fl^+_{S + \xi}(z))^2 &\geq \underbrace{\ell(y,\Fl^+_S(z))^2}_{ \ge 0} + (\xi - T)^2 + 2(\xi - T)(\underbrace{F(\Fl^+_S(z))}_{=S + \Psi} - F(y)) + \Psi \\
        &{\ge} (\xi - T)^2 + 2(\xi - T)S + \Psi.
    \end{align*}
    Let $\xi_0$ be the first nonnegative parameter for which
$\Fl^+_{S+\xi_0}(z)\ge\Fl^+_T(y)$.
If $\xi_0>T+\theta$, the left-hand side of the above estimate
vanishes at $\xi_0$, whereas its right-hand side is strictly
positive after increasing $\theta$ within the class $\Psi$,
a contradiction. Thus $\xi_0\leq T+\theta$, hence
$x\leq\Fl^+_T(y)\leq\Fl^+_{T+S+\theta}(z)$. Analogously, $z \leq \Fl^+_{T + S + \theta} (x)$. Thus,
    \begin{equation*}
        \tilde{\mathsf{d}}(x,z) \leq T + S + \theta = \tilde{\mathsf{d}}(x,y) + \tilde{\mathsf{d}}(y,z) + \theta.
    \end{equation*}
    To show the final claim, let $T:=\tilde{\mathsf{d}}(x,y)$ and suppose that 
    \begin{equation*}
        \ell(x,\Fl^+_T(y)) > 0 \quad \text{and} \quad \ell(y,\Fl^+_T(x)) > 0.
    \end{equation*}
    By continuity, there is $\varepsilon > 0$ such that $x \leq \Fl^+_{T - \varepsilon}(y)$ and $y \leq \Fl^+_{T - \varepsilon}(x)$, which would imply $T\leq T-\varepsilon$, a contradiction.
\end{proof}

We will also use the larger spacelike slice
\begin{equation}
\label{eq: extended slice}
\widehat X:=J(\Upsilon_{-5/2},\Upsilon_{5/2})\cap\{b_L^+=0\},
\end{equation}
which contains $X$. Applying Proposition \ref{Prop: flow metrization} after the
rescaling $\mathsf g\mapsto 4 \mathsf g/25$ gives the same construction on
$\widehat X$, with the interval $[0,10]$ in \eqref{eq: def tilde d} and with
$\tilde{\mathsf d}\leq 5+\theta$. The required flow segments are
defined in $H$ by successive applications of Lemma \ref{Lemma: Almost monotonicity of reparam grad flow}.
Choosing a common $\theta$ of type $\Psi$ for both constructions,
we obtain a metric $\mathsf d$ on $\widehat X$ whose restriction to $X$
is the metric above. Indeed, for points of $X$ the minimum in
\eqref{eq: def tilde d} is already attained in $[0,2+\theta]$.
We use the same notation $\tilde{\mathsf d}$ and $\mathsf d$ for these
extensions.

\begin{Proposition}[Almost symmetry]
\label{P:almost symmetry}
Let $x,y \in X$ and suppose that $\ell(x,\emph{\Fl}^+_{\tilde{\mathsf{d}}(x,y)} (y)) =  0$, then
\begin{equation}
    0 \leq \ell(y,\emph{\Fl}^+_{\tilde{\mathsf{d}}(x,y)}(x)) = \Psi.
\end{equation}
In particular, also $\ell(y,\Fl^+_{\mathsf{d}(x,y)}(x)) =\Psi$.
\end{Proposition}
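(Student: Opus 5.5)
Let $T:=\tilde{\mathsf d}(x,y)\in[0,2+\theta]$ and $p:=\Fl^+_T(y)$, $q:=\Fl^+_T(x)$. By definition of $\tilde{\mathsf d}$ we have $x\le p$ and $y\le q$, so $\ell(y,q)\ge 0$; only the upper bound needs proof. The plan is to compute $\ell(y,q)^2$ with the almost Pythagoras formula (Proposition \ref{Prop: almost pythagoras flow formula}). Since $x,y\in X$, we have $F(x)=F(y)=L$, so the mixed terms drop out. The hypothesis $\ell(x,p)=0$ then forces $T$ to be small in the relevant sense, and this smallness is transferred to $\ell(y,q)$.

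\textbf{Step 1 (the hypothesis).} Apply \eqref{future Pythagoras} to the pair $x\le x$ with times $s=0$ and $t=T$. Since $\Fl^+_0(x)=x\le \Fl^+_T(x)$, this gives
\[
\ell(x,q)^2 = T^2+\Psi .
\]
More usefully, apply it to the pair $(x,p)$ viewed as $\Fl^+_0(x)\le\Fl^+_T(y)$. This requires $x\le y$, which we do not know. I would therefore go through $y$ instead, using Step 1 of the proof of Proposition \ref{Prop: almost pythagoras flow formula} in its general form, $\ell(\Fl^+_s(a),b)^2=\ell(a,b)^2+s^2-2s(F(b)-F(a))+\Psi$ whenever $\Fl^+_s(a)\le b$, with the roles suitably arranged.

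The cleanest route is symmetric. Since $y\le q=\Fl^+_T(x)$, the time-$0$ versus time-$T$ case (Step 2, $s=0$) with initial pair $y$ and $x$ needs $y\le x$, which is again not available. So I would instead work with the one-sided inequality from Proposition \ref{Prop: second estimate for reparametrized flow} together with reverse triangle inequalities through $\Upsilon$-type comparison points.

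Concretely, I would use the past flow. Let $\bar t$ be the first time with $\Fl^-_{\bar t}(p)\in\partial J^+(x)$. The argument in Step 1 of Proposition \ref{Prop: almost pythagoras flow formula} (via \eqref{eq3.41} and almost linearity, using $F(p)=L+T+\Psi$ and $F=2L-G+\Psi$) shows $\bar t=T+\Psi$. Hence $\Fl^-_{T}(p)$ is $\Psi$-close in $\ell$ to the slice $\{b^+_L=0\}$ and lies in the causal boundary of $x$. Combining this with $\ell(\Fl^-_T(\Fl^+_T(y)),y)$ computed by the same \eqref{3.41bis}-type estimate, we get that $y$ and $x$ are "almost null related along the flow".

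\textbf{Step 2 (transfer to $\ell(y,q)$).} Using \eqref{future Pythagoras}/\eqref{past Pythagoras} and the Busemann sum estimate (Proposition \ref{Prop: Estimate on Busemann sum}), I would express
\[
\ell(y,q)^2=\ell(\Fl^-_T(q),\Fl^-_0 \text{ or } y)^2\text{-type quantities}=T^2-T^2+\Psi ,
\]
with the same formula with $x,y$ swapped. The point is that each of the quantities $\ell(x,p)^2$ and $\ell(y,q)^2$ equals a symmetric expression $\Phi(x,y,T)+\Psi$. The expression is symmetric because $F(x)=F(y)$ and $\ell(x,y)$, $\ell(y,x)$ both vanish or are $\Psi$: for two points on $\{F=L\}$ that are causally related, $\ell\le\Psi$ by almost linearity and $1$-steepness. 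Hence $\ell(y,q)^2=\ell(x,p)^2+\Psi=\Psi$.

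The last claim follows from $\mathsf d=\tilde{\mathsf d}+\theta$ with $\theta\le\Psi$. Indeed, $\ell(y,\Fl^+_{T+\theta}(x))^2\le \ell(y,q)^2+\theta^2+2\theta T+\Psi=\Psi$, again by \eqref{future Pythagoras} applied to $y\le q$ with times $0$ and $\theta$.

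\textbf{Main obstacle.} The hard step is Step 2: we cannot assume $x\le y$ or $y\le x$, so Pythagoras is not directly applicable to the pair $(x,y)$. The intended workaround is to pass through the past flow of $p$ hitting $\partial J^+(x)$ and use the first-contact-time argument to produce the symmetric expression.
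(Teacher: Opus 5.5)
The gap is Step 2, and it is a missing idea, not a technicality. For $x,y\in X$, which are generically spacelike separated, the Pythagoras formulas can only be started from a first-contact configuration. What they yield is $\ell(x,\Fl^+_t(y))^2=t^2-d_{xy}^2+\Psi$ and $\ell(y,\Fl^+_t(x))^2=t^2-d_{yx}^2+\Psi$. Here $d_{xy}$ (resp.\ $d_{yx}$) is the first time at which $\Fl^+_\cdot(y)$ enters $J^+(x)$ (resp.\ $\Fl^+_\cdot(x)$ enters $J^+(y)$). The hypothesis amounts to $d_{xy}=T+\Psi$, and the proposition is then equivalent to $d_{yx}=d_{xy}+\Psi$. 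So claiming that $\ell(x,p)^2$ and $\ell(y,q)^2$ both equal a ``symmetric expression $\Phi(x,y,T)+\Psi$'' assumes the conclusion.

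Nor can the tools you invoke supply it, as the following model shows. On $\R\times\R^m$ put $\ell((s,a),(t,c))=\sqrt{(t-s)^2-\phi(c-a)^2}$ for $t-s\ge\phi(c-a)$ (and $-\infty$ otherwise). Here $\phi(v)=|v|+\langle\beta,v\rangle$ with $0<|\beta|<1$ is a non-reversible (Randers) norm. Take $F=L+t$, $G=L-t$ and $\Fl^\pm_\tau(s,a)=(s\pm\tau,a)$. Then \eqref{future Pythagoras}, \eqref{past Pythagoras}, almost linearity, $b^+_L+b^-_L=0$ and the approximate inversion of the flows all hold with zero error. Yet for $x=(0,a)$, $y=(0,c)$ with $\phi(c-a)>\phi(a-c)$ one gets $T=\tilde{\mathsf d}(x,y)=\phi(c-a)$ and $\ell(x,\Fl^+_T(y))=0$, while $\ell(y,\Fl^+_T(x))^2=T^2-\phi(a-c)^2>0$ is not small.

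Your past-flow workaround also misfires. By hypothesis $p=\Fl^+_T(y)$ already lies on $\partial J^+(x)$, so the first contact time of $\Fl^-_\cdot(p)$ with $\partial J^+(x)$ is $0$, not $T+\Psi$. The first-contact argument of Step 1 in Proposition \ref{Prop: almost pythagoras flow formula} concerns $\Fl^+_s(x)$ flowed back towards $x$ itself. Moreover, $\Fl^-_T(p)$ is essentially $y$, not a point of $\partial J^+(x)$. Past Pythagoras from $x\le p$ only relates the past flow of $x$ to $y$: it trades future flow for past flow but never swaps the roles of $x$ and $y$.

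The symmetry has to come from the curvature bound once more, through the reversibility of the two-dimensional Lorentzian model; this is what the paper does.
\begin{itemize}
\item Flow $y$ further to $p':=\Fl^+_{2T+\Psi}(y)$. From the contact $x\le\Fl^+_T(y)$, \eqref{future Pythagoras} gives $\ell(\Fl^+_T(x),p')=\Psi$.
\item Pick a distant vertex $\Upsilon_{-K}$ with $K=K(\delta,L)\to\infty$ slowly. Let $z$ be the point where the maximizer from $\Upsilon_{-K}$ to $p'$ crosses $\partial J^-(y)$.
\item Use the hinge estimate $F=F(\Upsilon_{-K})+\ell(\Upsilon_{-K},\cdot)+\Psi$ (Proposition \ref{Prop: Estimate on Busemann sum} for $\Upsilon|_{[-K,K]}$) together with \eqref{future Pythagoras} for the metric rescaled by $K^{-2}$. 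This gives $\ell(\Upsilon_{-K},z)=K+\Psi$, $\ell(z,p')=2T+\Psi$ and $\ell(\Upsilon_{-K},\Fl^+_T(x))=K+T+\Psi$.
\item Apply triangle comparison (Theorem \ref{thm: triangle comparison}) in $\Delta(\Upsilon_{-K},\Fl^+_T(x),p')$ and the law of cosines in $\mathbb L^2(-\delta)$. This yields $\ell(z,\Fl^+_T(x))=\Psi$; in the flat model its square is $2T^3/(K+2T)$, which is why $K\to\infty$ is needed.
\item Since $z\le y$, the reverse triangle inequality gives $\ell(y,\Fl^+_T(x))\le\ell(z,\Fl^+_T(x))=\Psi$.
\end{itemize}
Your final passage from $\tilde{\mathsf d}$ to $\mathsf d=\tilde{\mathsf d}+\theta$ is fine once this estimate is in place.
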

\begin{proof}
    By the first-contact argument in Proposition \ref{Prop: flow metrization} and the almost-Pythagoras formula \eqref{future Pythagoras}, we have that
    \[
    0 < \ell(\Fl^+_{\tilde{\mathsf{d}}(x,y)} (x),\Fl^+_{2\tilde{\mathsf{d}}(x,y) + \Psi}(y)) = \Psi \, .
    \]
    We may assume that $0 < \ell(y,\Fl^+_{\tilde{\mathsf{d}}(x,y)}(x))$, otherwise there is nothing to show.  Notice that, since $F(x) = L$ (equivalently, $b^+_L(x) = 0$), by almost linearity of $F$ along $\Fl^+$ we have that $F( \Fl^+_{\tilde{\mathsf{d}}(x,y)}(x)) = F(x) + \tilde{\mathsf{d}}(x,y) + \Psi = L + \tilde{\mathsf{d}}(x,y) + \Psi$.
    Choose $K=K(\delta,L)\to\infty$ sufficiently slowly so that
$K = o(L)$ and
\begin{equation*}
K^2\Psi(\delta K^2,L/K)\longrightarrow0,
\end{equation*}
where $\Psi$ denotes the error in \eqref{future Pythagoras}. Connect $\Upsilon_{-K}$ to $\Fl^+_{2\tilde{\mathsf{d}}(x,y) + \Psi}(y)$ via a maximizing future directed $\mathsf g$-arclength parametrized timelike geodesic $\sigma:[0,S] \to M$, where $S:=\ell(\Upsilon_{-K},\Fl^+_{2\tilde{\mathsf{d}}(x,y) + \Psi}(y))$. Note that $\sigma$ intersects $\partial J^-(y)$ uniquely at a point $z$, and since $y \ll \Fl^+_{\tilde{\mathsf{d}}(x,y)}(x)$ by assumption, we have $z \ll \Fl^+_{\tilde{\mathsf{d}}(x,y)}(x)$. Now consider the timelike triangle $\Delta(\Upsilon_{-K},\Fl^+_{\tilde{\mathsf{d}}(x,y)}(x), \Fl^+_{2\tilde{\mathsf{d}}(x,y) + \Psi}(y))$. By triangle comparison,
    \begin{equation}
        0 < \ell(z, \Fl^+_{\tilde{\mathsf{d}}(x,y)}(x)) \leq \bar \ell_{-\delta}(\bar z, \bar x_{\tilde{\mathsf{d}}}),
    \end{equation}
    where $\bar x_{\tilde{\mathsf{d}}}$ is the point corresponding to $\Fl^+_{\tilde{\mathsf{d}}(x,y)}(x)$ on the comparison triangle in $\mathbb{L}^2(-\delta)$. 
    Set $F_K:=\ell(\Upsilon_{-K},\cdot)$ and
$G_K:=\ell(\cdot,\Upsilon_K)$. Proposition \ref{Prop: Estimate on Busemann sum} applied to
$\Upsilon|_{[-K,K]}$, together with the reverse triangle
inequality, gives on $J(\Upsilon_{-20},\Upsilon_{20})$
\begin{equation*}
0\le F-(L-K+F_K)\le2K-F_K-G_K\le\Psi(\delta,K).
\end{equation*}
In particular,
    \begin{equation} \label{eq: hinge}
        F( \xi) = F \left (\Upsilon_{-K} \right ) + \ell ( \Upsilon_{-K}, \xi) + \Psi \, .
    \end{equation}
    Consider the rescaled metric $\mathsf g'=K^{-2}\mathsf g$. Its curvature parameter is $\delta K^2$ and the rescaled half-length of $\Upsilon$ is $L/K$. Since
$\Upsilon_{-K}\le z\le y\le\Upsilon_K$, we may apply \eqref{future Pythagoras}
to $z,y$ in the rescaled metric. Rescaling back, its error
is $K^2\Psi(\delta K^2,L/K)=\Psi$, and we obtain
    \begin{align} \label{eq: quadr}
        \ell(z, \Fl^+_{2\tilde{\mathsf{d}}(x,y) + \Psi}(y))^2 &= (2\tilde{\mathsf{d}}(x,y) + \Psi)^2 + 2(2\tilde{\mathsf{d}}(x,y) + \Psi)(L-F(z)) + \Psi \, .
    \end{align}
    Put $T:=2\tilde{\mathsf d}(x,y)+\Psi$,
$p:=\Fl^+_T(y)$ and $c:=\ell(z,p)$.
Since $z$ lies on the maximizing geodesic from
$\Upsilon_{-K}$ to $p$, the reverse triangle inequality
and \eqref{eq: hinge} at $p$ give
\begin{equation*}
0\le F(p)-F(z)-c
\le F(p)-(L-K+F_K(p))=\Psi.
\end{equation*}
Thus $F(z)+c=L+T+\Psi$, and \eqref{eq: quadr} gives
$(c-T)^2=\Psi$, which establishes that
    \begin{equation*}
        0 < \ell(z, \Fl^+_{2\tilde{\mathsf{d}}(x,y) + \Psi}(y)) = 2\tilde{\mathsf{d}}(x,y) + \Psi.
    \end{equation*}
    We now apply the law of cosines to the comparison triangle
of
\begin{equation*}
\Delta\bigl(\Upsilon_{-K},
\Fl^+_{\tilde{\mathsf d}(x,y)}(x),p\bigr),
\end{equation*}
where $\bar z$ lies on the side corresponding to the
maximizer from $\Upsilon_{-K}$ to $p$.
The preceding estimates give
\begin{align*}
\ell(\Upsilon_{-K},z)&=K+\Psi,\\
\ell(z,p)&=2\tilde{\mathsf d}(x,y)+\Psi,\\
\ell(\Upsilon_{-K},
\Fl^+_{\tilde{\mathsf{d}}(x,y)}(x))
&=K+\tilde {\mathsf{d}}(x,y)+\Psi,\\
\ell(\Fl^+_{\tilde{\mathsf  d}(x,y)}(x),p)&=\Psi.
\end{align*}
Thus the asymptotic formula in the law of cosines, whose remainder is
$O(\delta K^4)=\Psi$, yields
    \begin{align*}
        \ell_{-\delta}(\bar z, \bar x_{\tilde{\mathsf{d}}})^2 = \frac{(K +\tilde{\mathsf{d}}(x,y) + \Psi)^2(2\tilde{\mathsf{d}}(x,y) + \Psi) + \Psi^2 (K + \Psi)}{K + 2\tilde{\mathsf{d}}(x,y) + \Psi} - (K+\Psi)(2 \tilde{\mathsf{d}}(x,y) + \Psi) + \Psi.
    \end{align*}
    Upon bringing the terms to the same denominator, it is easily checked that $\bar \ell_{-\delta}(\bar z, \bar x_{\tilde{\mathsf{d}}}) = \Psi$, which on the other hand lets us conclude that
    \begin{align*}
        0 < \ell(z, \Fl^+_{\tilde{\mathsf{d}}(x,y)}(x)) = \Psi.
    \end{align*}
    Observing that $\ell(z, \Fl^+_{\tilde{\mathsf{d}}(x,y)}(x)) \geq \ell(y, \Fl^+_{\tilde{\mathsf{d}}(x,y)}(x)) > 0$ by the reverse triangle inequality concludes the proof.
\end{proof}

\begin{Remark}[{Past and future flows approximately invert each other}]
\label{Remark: flows are inverse}
In Proposition \ref{Prop: almost pythagoras flow formula}, we obtain as a consequence of its proof that if $x \in J(\Upsilon_{-1},\Upsilon_1)$ and $s \in [0,2]$, then there exists a unique $\bar t = s + \Psi$ such that $\Fl^-_{\bar t}(\Fl^+_s(x)) \in \partial J^+(x)$, and similarly with $\Fl^+$ and $\Fl^-$ interchanged.
Also note that if $y\in J(\Upsilon_{-1},\Upsilon_1)$, then
\begin{equation*}
\ell_+(x,y)
=\ell_+\bigl(\Fl^-_{\bar t}
(\Fl^+_s(x)),y\bigr)+\Psi.
\end{equation*}
This follows from \eqref{future Pythagoras}-\eqref{past Pythagoras}, first flowing $y$ a fixed
amount to the future and then comparing the resulting
formulas up to the first causal contact.
\end{Remark}

\begin{Corollary}
\label{Corollary: mixed flows}
    Let $x,y \in J(\Upsilon_{-1},\Upsilon_1)$ and $s,t \in [0,2]$ be such that $\Fl^-_s(x) \leq \Fl^+_t(y)$. Then
    \begin{equation}\label{mixed Pythagoras}
        \ell(\Fl^-_s(x),\Fl^+_t(y))^2 = \ell_+(x,y)^2+(t+s-d)^2
+2(t+s-d)(b_L^+(x)-b_L^+(y)+d)+\Psi
    \end{equation}
    where $d \geq 0$ is the minimal time such that $\Fl^+_d(y) \geq x$. 
    In particular, if $x,y \in X$, then
    \begin{equation}
    \label{eq: pythagoras from slice}
        \ell(\Fl^-_s(x),\Fl^+_t(y))^2 = (t+s)^2 - \mathsf{d}(x,y)^2 + \Psi.
    \end{equation}
\end{Corollary}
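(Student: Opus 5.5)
Here is the plan. I will reduce the mixed configuration to the two one-sided almost-Pythagoras formulas \eqref{future Pythagoras} and \eqref{past Pythagoras}, using $F=2L-G+\Psi$ (Proposition \ref{Prop: Estimate on Busemann sum}) and the almost linearity of $F$ and $G$ along $\Fl^\pm$ (Proposition \ref{Proposition: Almost linearity} and its $G$-analogue).

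\textbf{Reduction to $y\ge x$ via $y':=\Fl^+_d(y)$.} Let $d$ be the minimal time with $\Fl^+_d(y)\ge x$, and put $y':=\Fl^+_d(y)$. When $x\le y$ we have $d=0$. Otherwise $x\in\partial J^-(y')$, so $\ell(x,y')=0$. Since $d\le 2+\Psi$ by the argument of Proposition \ref{Prop: flow metrization} (rescaled as needed, with $y'$ still in a slightly enlarged diamond), we may write $\Fl^+_t(y)=\Fl^+_{t-d}(y')$ whenever $t\ge d$. The semigroup property holds up to reparametrization, which is harmless since the reparametrization of \eqref{eq: reparametrization} depends only on the current point.

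\textbf{The case $t\ge d$.} Here $x\le y'$, and we must compute $\ell(\Fl^-_s(x),\Fl^+_{t'}(y'))^2$ with $t':=t-d$. First apply \eqref{future Pythagoras} with base points $x\le y'$, flowing $x$ by time $0$ and $y'$ by time $t'$:
\[
\ell(x,\Fl^+_{t'}(y'))^2=\ell(x,y')^2+t'^2+2t'(F(y')-F(x))+\Psi.
\]
Next apply \eqref{past Pythagoras} with base points $\Fl^+_{t'}(y')\ge x$, flowing the upper point by time $0$ and $x$ by the past time $s$:
\[
\ell(\Fl^-_s(x),\Fl^+_{t'}(y'))^2=\ell(x,\Fl^+_{t'}(y'))^2+s^2+2s\bigl(G(x)-G(\Fl^+_{t'}(y'))\bigr)+\Psi.
\]
Now substitute $G=2L-F+\Psi$ and $F(\Fl^+_{t'}(y'))=F(y)+d+t'+\Psi$, and collect terms:
\[
\ell(x,y')^2+(t'+s)^2+2(t'+s)\bigl(F(y)+d-F(x)\bigr)+\Psi.
\]
Since $F=L-b_L^+$, this is exactly \eqref{mixed Pythagoras} with $t'+s=t+s-d$, except that $\ell(x,y')^2$ appears in place of $\ell_+(x,y)^2$. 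When $d=0$ we have $y'=y$ and the two agree. When $d>0$ we have $\ell(x,y')=0$, and $\ell_+(x,y)=0$ because $x\not\le y$ (if $x\le y$ but $x\not\ll y$ this is also $0$). The past formula requires its causal hypothesis on the flowed points; this is supplied by the assumption $\Fl^-_s(x)\le\Fl^+_t(y)$ together with monotonicity (Proposition \ref{Prop: first expansion rate estimate}).

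\textbf{The case $t<d$; main obstacle.} Here $y'$ is not reached. I would instead flow $x$ backward to its first contact with $J^-(y)$. By Remark \ref{Remark: flows are inverse}, there is $d'=d+\Psi$ with $\Fl^-_{d'}(x)\in\partial J^-(y)$ (the past analogue of the first-contact time). Running the same two-step computation with the roles of the past and future flows exchanged gives the formula with $d'$ in place of $d$, and replacing $d'$ by $d$ costs only $\Psi$ because every coefficient is bounded. I expect the main difficulty to be justifying these contact times and the domain restrictions (all flowed points must stay in $J(\Upsilon_{-4},\Upsilon_4)$), as well as verifying the ordering hypotheses needed to invoke each one-sided formula. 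I would handle this by the first-contact argument of Step 1 of Proposition \ref{Prop: almost pythagoras flow formula}.

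\textbf{The slice case.} For $x,y\in X$ we have $b_L^+(x)=b_L^+(y)=0$ and $\ell_+(x,y)=\Psi$, since both points lie in the level set $\{F=L\}$ and $F$ is $1$-steep. Also $d=\mathsf d(x,y)+\Psi$ by Propositions \ref{Prop: flow metrization} and \ref{P:almost symmetry}. Substituting into \eqref{mixed Pythagoras} gives
\[
(t+s-d)^2+2(t+s-d)d=(t+s)^2-d^2,
\]
which is \eqref{eq: pythagoras from slice}.
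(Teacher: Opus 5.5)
\textbf{The case $t<d$ has a genuine gap.} Your case $t\ge d$ is correct; it is the paper's two-step computation with the future and past steps done in the opposite order. The slice case is also essentially fine. There $\ell_+(x,y)=0$ exactly, by $1$-steepness. Getting $d=\mathsf d(x,y)+\Psi$ needs the future almost-Pythagoras formula in addition to almost symmetry, to turn $\ell=\Psi$ at the contact time into $\tilde{\mathsf d}(x,y)^2-d^2=\Psi$.

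Your mirror reduction for $t<d$ starts from the ordered pair $\Fl^-_{d'}(x)\le y$. It reaches $\Fl^-_s(x)$ only by flowing further into the past, so it requires $s\ge d'$, and the hypothesis $\Fl^-_s(x)\le\Fl^+_t(y)$ does not force this. Test it in the model product, where $\Fl^\pm_r$ are the translations $(\tau,p)\mapsto(\tau\pm r,p)$. Take $x=(a,p)$ and $y=(b,p')$ with $d=a-b+\mathsf d(p,p')>0$. Then $d'=d$, and $\Fl^-_s(x)\le\Fl^+_t(y)$ is equivalent to $t+s\ge d$. So, for instance, $t=s=\tfrac34 d$ is admissible with $t<d$ and $s<d'$. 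In that regime neither $x\le\Fl^+_t(y)$ nor $\Fl^-_s(x)\le y$ holds. Hence neither of your base pairs, $x\le\Fl^+_d(y)$ or $\Fl^-_{d'}(x)\le y$, produces the configuration, and your plan yields no formula there. Separately, the remark on approximately inverse flows is about $\Fl^-_{\bar t}(\Fl^+_s(x))$ returning to $\partial J^+(x)$. It does not by itself give $d'=d+\Psi$.

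\textbf{The missing idea} is to use the hypothesis itself as the ordered pair of base points, which makes $d'$ unnecessary. Put $q:=\Fl^+_d(y)$. Since $\Fl^-_s(x)\le x\le q$ for every $s$, apply the past formula to the base points $x\le q$, together with $G=2L-F+\Psi$ and almost linearity. This gives
\[
\ell(\Fl^-_s(x),q)^2=\ell_+(x,y)^2+s^2+2s\bigl(b_L^+(x)-b_L^+(y)+d\bigr)+\Psi,
\]
with no restriction on $s$.

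Now suppose $t<d$. Apply the future formula to the ordered pair $\Fl^-_s(x)\le\Fl^+_t(y)$, flowing the second point forward by $d-t$ so that it lands at $q$:
\[
\ell(\Fl^-_s(x),q)^2=\ell(\Fl^-_s(x),\Fl^+_t(y))^2+(d-t)^2+2(d-t)\bigl(b_L^+(x)-b_L^+(y)+t+s\bigr)+\Psi .
\]
Here almost linearity gives $F(\Fl^+_t(y))-F(\Fl^-_s(x))=F(y)-F(x)+t+s+\Psi$. Solving for the unknown and expanding gives exactly the claimed formula. For $t\ge d$ one instead flows $q$ forward by $t-d$.

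This is the paper's argument, and it covers every admissible $(s,t)$ uniformly. As in the paper, note that the one-sided formulas are applied at base points such as $\Fl^-_s(x)$ and $\Fl^+_t(y)$, which lie in a slightly enlarged diamond. This is covered by the rescaled versions of those formulas.
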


\begin{proof}
Let $d$ be the minimal time such that
$\Fl^+_d(y)\ge x$, and set $q:=\Fl^+_d(y)$.
Then $\ell(x,q)^2=\ell_+(x,y)^2$.
Since $\Fl^-_s(x)\leq x\leq q$, the past almost
Pythagoras formula \eqref{past Pythagoras}, together with almost linearity \eqref{linearity},
gives
\begin{equation*}
\ell(\Fl^-_s(x),q)^2
=\ell_+(x,y)^2+s^2
+2s\bigl(b_L^+(x)-b_L^+(y)+d\bigr)+\Psi.
\end{equation*}
If $t\geq d$, we apply \eqref{future Pythagoras} to
$\Fl^-_s(x)\le q$, flowing $q$ forward for time
$t-d$. If $t<d$, we instead apply \eqref{future Pythagoras} to $\Fl^-_s(x)\leq\Fl^+_t(y)$, flowing the
second point forward for time $d-t$, and rearrange.
In both cases, almost linearity \eqref{linearity} yields
\begin{align*}
\ell(\Fl^-_s(x),\Fl^+_t(y))^2
&=\ell(\Fl^-_s(x),q)^2+(t-d)^2\\
&\quad+2(t-d)\bigl(b_L^+(x)-b_L^+(y)+d+s\bigr)+\Psi\\
&=\ell_+(x,y)^2+(t+s-d)^2\\
&\quad+2(t+s-d)\bigl(b_L^+(x)-b_L^+(y)+d\bigr)+\Psi,
\end{align*}
which proves \eqref{mixed Pythagoras}.

For $x,y\in X$, we have $b_L^+(x)=b_L^+(y)=0$ and
$\ell_+(x,y)=0$. Proposition \ref{P:almost symmetry} and \eqref{future Pythagoras} give
$\tilde{\mathsf d}(x,y)^2-d^2=\Psi$.
Since $\tilde{\mathsf d}(x,y)\ge d$, it follows that
$d=\tilde{\mathsf d}(x,y)+\Psi=\mathsf d(x,y)+\Psi$,
which proves \eqref{eq: pythagoras from slice}.
\end{proof}

By fixed rescaling, Proposition \ref{P:almost symmetry} and \eqref{eq: pythagoras from slice} also hold for $x,y\in\widehat X$, where $\hat X$ is the extended spacelike slice from \eqref{eq: extended slice}.

\subsection{Proof of the main result}
\label{Subsection: Proof of main result}

We are now in position to prove our main result. To this end, recall the flow metrized spacelike slice $(X,\mathsf{d})$
from Proposition \ref{Prop: flow metrization}, where $X:=  J^{\mathcal M}(\Upsilon_{-1},\Upsilon_1)\cap\{b^+_L=0\}$ (we write $J^\mathcal M$ in this Subsection only to distinguish diamonds in $\mathcal M$ from those in $\R \times X$). If
$X\cap\partial J^{\mathcal M}(\Upsilon_{-1},\Upsilon_1)\neq\emptyset$, choose $w$ in this intersection. Otherwise, if possible, choose $w\in X$ with
$\mathsf d(\Upsilon_0,w)>1$. In either case, \eqref{eq: pythagoras from slice} gives
$\mathsf d(\Upsilon_0,w)=1+\Psi$, and we replace the metric on
$\widehat X$ by
\begin{equation*}
\frac{\mathsf d}{\mathsf d(\Upsilon_0,w)},
\end{equation*}
retaining the notation $\mathsf d$. This preserves all preceding
estimates up to $\Psi$ and ensures that the product spacelike
boundary is nonempty. If neither choice is possible, we keep the original metric.
In this case no assignment to the product spacelike boundary
will be needed below: a level projection leaving $J^{\mathcal M}(\Upsilon_{-1},\Upsilon_1)$, or a point
of $i^0$, would produce a point of $X\cap\partial J^{\mathcal M}(\Upsilon_{-1},\Upsilon_1)$ by intersecting
an appropriate null boundary generator with $\{b_L^+ = 0\}$.

We will consider the corresponding Lorentzian product $(\R \times X, \ell_{\R \times X})$ from \eqref{Lorentzian products}. Let $x_0:=\Upsilon_0 \in X$ and define the corresponding diamond
\begin{equation}\label{comparison diamond}
    J^{\R \times X}((-1,x_0),(1,x_0)) = \{(t,x) \in \R \times X:  
    \mathsf{d}(x_0,x) \leq 1 - |t|\}.
\end{equation}
Its spacelike boundary is denoted
$\bar i^0 :=$ $\{(t,x) \in \R \times X: t = 0, \mathsf d(x_0,x) = 1\}$ as in Remark \ref{Remark: i^0}.

Recall Remark \ref{Remark: flows are inverse}, which quantifies how the flows $\Fl^+$ and $\Fl^-$ are approximately inverse to each other. This will be used below.

To prove our claim, we need to define a correspondence $\cR$ between $J^\mathcal M(\Upsilon_{-1},\Upsilon_1)$ and $J^{\R \times X}((-1,x_0),(1,x_0))$ whose distortion is $\leq \Psi$. Throughout these final arguments, we understand the spacelike boundaries $i^0$  and $\bar i^0$ in the two diamonds each to be collapsed to a point 
(if nonempty). All level projections below occur by flow time $1$, by the
steepness of $F$, and belong to $\widehat X$ by Lemma \ref{Lemma: Almost monotonicity of reparam grad flow}. We define $\cR$ as follows:
\begin{enumerate}
    \item If $x \in J^\mathcal M(\Upsilon_{-1},\Upsilon_1)$ and $-b_L^+(x) \geq 0$ in \eqref{b^+_L}, the flow $t \mapsto \Fl^-_t(x)$ will intersect $\{b^+_L = 0\}$ uniquely at a time $\bar t = -b^+_L(x) + \Psi \geq 0$
    (by approximate affineness, Proposition \ref{Proposition: Almost linearity}). Let $z = \Fl^-_{\bar t}(x)$ be the intersection point.  
    If $z \in J^\mathcal M(\Upsilon_{-1},\Upsilon_1)$ and $\mathsf d(x_0,z) \leq 1$, take $(x,(\min  \{ \bar t, 1 - \mathsf d(x_0,z)\}, z)) \in \cR$,  which \eqref{comparison diamond} shows to be in the product of the diamonds to be compared. 
    Otherwise, we take $(x,{\bar i^0}) \in \cR$.

    \item If $x \in J^\mathcal M(\Upsilon_{-1},\Upsilon_1)$ and $b^+_L(x) > 0$, we proceed similarly as in (i): The flow $t \mapsto \Fl^+_t(x)$ will intersect $\{b_L^+ = 0\}$ uniquely in some $z$ at a unique time $\bar t = b_L^+(x) + \Psi \ge 0$. If $z \in J(\Upsilon_{-1},\Upsilon_1)$ and $\mathsf d(x_0,z) \leq 1$, set $(x, (-\min\{\bar t, 1 - \mathsf d(x_0,z)\}, z)) \in \cR$. Otherwise, take $(x,{\bar i^0}) \in \cR$.
 
    \item Now suppose $(t,z) \in J^{\R \times X}((-1,x_0),(1,x_0))$. Then $z \in X$. If $t \geq 0$ and $\Fl^+_t(z) \in J(\Upsilon_{-1},\Upsilon_1)$, we take $(\Fl^+_t(z), (t,z)) \in \cR$. If $\Fl^+_t(z) \notin J^\mathcal M(\Upsilon_{-1},\Upsilon_1)$, let $x \in \partial J^\mathcal M(\Upsilon_{-1},\Upsilon_1)$ be the exit point from $J^{\mathcal M}(\Upsilon_{-1},\Upsilon_1)$ of the curve
$s\in[0,t]\mapsto\Fl^+_s(z)$. In this case, set $(x,(t,z)) \in \cR$.

    \item If $(t,z) \in J^{\R \times X}((-1,x_0),(1,x_0))$ and $t < 0$, if $\Fl^-_{-t}(z) \in J(\Upsilon_{-1}, \Upsilon_1)$, we take the pair $(\Fl^-_{-t}(z), (t,z)) \in \cR$. If not, let $x$ be the exit point from
$J^{\mathcal M}(\Upsilon_{-1},\Upsilon_1)$ of the curve
$s\in[0,-t]\mapsto\Fl^-_s(z)$,
and take $(x,(t,z))\in \cR$.

    \item To complete the definition of $\cR$, 
    {if $J^{\mathcal M}(\Upsilon_{-1},\Upsilon_1)$ has a spacelike boundary $i^0 \ne \emptyset$}, we include $(i^0,{\bar i^0}) \in \cR$.
\end{enumerate}

Now we show that the distortion of $\cR$ is $\leq \Psi$, which will conclude the proof of our main result, where
\begin{equation}
    \mathrm{dis}(\cR) := \sup\{ |\ell_+(x,x') - (\ell_{\R \times X})_+((t,z),(t',z'))| : (x,(t,z)), (x',(t',z')) \in \cR\}.
\end{equation}

We proceed to justify this case by case in the following discussion. Consider a pair $(x,(t,z)),(x',(t',z')) \in \cR$ both from the graph (i)--(ii), the antigraph (iii)-(iv), or one from each.
\begin{enumerate}
\item[(vi)] If $x \not \leq x'$ and $(t,z) \not \leq (t',z')$  both $\ell(x,x')_+$ and $\ell_{\R\times X}((t,z),(t',z'))_+$ vanish so there is nothing to argue.

\item[(vii)] 

Define
\begin{equation*}
\Phi_t(z):=
\begin{cases}
\Fl^+_t(z),&t\geq0,\\
\Fl^-_{-t}(z),&t<0.
\end{cases}
\end{equation*}
For $z,z'\in X$ and $t,t'\in[-1,1]$, the almost-Pythagoras
formulas \eqref{future Pythagoras}, \eqref{past Pythagoras} and \eqref{eq: pythagoras from slice} give
\begin{equation*}
\ell_+(\Phi_t(z),\Phi_{t'}(z'))^2
=
\bigl((t'-t)_+^2-\mathsf d(z,z')^2\bigr)_++\Psi.
\end{equation*}
Indeed, for causally related endpoints this follows from
those formulas. Otherwise, increase $t'$ to the first causal
contact time $u$. At this contact,
$u-t=\mathsf d(z,z')+\Psi$, which gives the same conclusion
with positive parts.

\item[(viii)] 

We next check the adjustments to the flow times in the
definition of $\cR$. For $x\in J^\mathcal M(\Upsilon_{-1},\Upsilon_1)$ with
$b_L^+(x) \leq 0$, let $b$ be its past level-projection time, and put
$z:=\Fl^-_b(x)$ and $r:=\mathsf d(x_0,z)$. Then
$0\leq b\leq1$, and
\begin{align}
\ell(\Upsilon_{-1},x)^2&=(1+b)^2-r^2+\Psi,\\
\ell(x,\Upsilon_1)^2&=(1-b)^2-r^2+\Psi. \label{eq: ell x Upsilon1}
\end{align}
These identities follow from the almost-Pythagoras formulas \eqref{future Pythagoras}, \eqref{past Pythagoras} and \eqref{eq: pythagoras from slice}.
For the first identity, if $z\notin J(\Upsilon_{-1},\Upsilon_1)$,
first compare with $\Upsilon_{-3}$ and then move this endpoint
to $\Upsilon_{-1}$. For a future level projection, the two
signs are interchanged.

For the assignments in (i)-(ii) not involving $\bar i^0$,
these identities show that the truncation changes the
projection time only by $\Psi$. For the exit point
$x=\Fl^+_\tau(z)$ in (iii), where $\tau<t$, applying almost-Pythagoras once more gives
\begin{equation*}
0=\ell(x,\Upsilon_1)^2
=(1-\tau)^2-\mathsf d(x_0,z)^2+\Psi.
\end{equation*}
Since $\mathsf d(x_0,z)\leq1-t$, it follows that
\begin{equation*}
(t-\tau)^2
\leq(1-\tau)^2-(1-t)^2\leq\Psi.
\end{equation*}
Thus $\tau=t+\Psi$. The argument for (iv) is analogous.

By the argument of Remark \ref{Remark: flows are inverse}, flowing the comparison point
to the future or past, and the almost Pythagoras formulas,
replacing $x$ by $\Phi_t(z)$ changes $\ell_+$ by $\Psi$ in
either argument for every $(x,(t,z))\in \cR$ not involving
$\bar i^0$. Applying (vii) then gives the
required distortion bound.

\item[(ix)] 

Suppose $-b_L^+(x)\geq0$, and let $b,z,r$ be as in (viii).
If $z\in X$ but $r>1$, then \eqref{eq: ell x Upsilon1} gives
$b=\Psi$ and $r=1+\Psi$. This is the first case in which
$x$ is assigned to $\bar i^0$.

\item[(x)] 

It remains to consider the case $-b_L^+(x)\geq0$ where
$z:=\Fl^-_b(x)\notin X$, with $b$ the actual
level-projection time. Let $\omega=\Fl^-_\tau(x)$,
$0\leq\tau<b$, be the exit point of the past flow from
$J^\mathcal M(\Upsilon_{-1},\Upsilon_1)$, and set $a:=b-\tau$.
Since $\Fl^-_a(\omega)=z$, we get
\begin{align*}
0&=\ell(\Upsilon_{-1},\omega)^2
  =(1+a)^2-\mathsf d(z,x_0)^2+\Psi,\\
0&\leq\ell(x,\Upsilon_1)^2
  =(1-b)^2-\mathsf d(z,x_0)^2+\Psi.
\end{align*}
Subtracting the first equation from the second and noting that $b \leq 1$ yields
\begin{align*}
    0 \leq (1- b)^2 - (1+a)^2 + \Psi,
\end{align*}
which means that
\begin{equation*}
    2(b+a) \leq b^2 - a^2 + \Psi = (b-a)(b+a) + \Psi \leq b(b+a) + \Psi \leq b + a + \Psi. 
\end{equation*}
It follows that $b+ a \leq \Psi.$ This allows us to conclude that $b_L^+(x), b_L^+(\omega) = \Psi$. This shows that assigning $x$ to $\bar i^0$ in the product diamond is justified.
{\qed}
\end{enumerate}

\section{Outlook}
\label{section: discussion outlook}

In the setting of timelike sectional curvature bounds, it would be interesting to investigate whether our result can be improved to also consider a quantification of the maximizing property of the long segment, i.e., the case where the long segment is almost a maximizer up to a small error.

A natural next step would be to try to obtain a Lorentzian almost-splitting result under the timelike Ricci curvature bounds that are highly relevant in general relativity. A quantified almost-splitting version of Galloway's celebrated splitting theorem \cite{Galloway89splitting} for globally hyperbolic spacetimes which satisfy the strong energy condition would be very much in line with the Riemannian results of Cheeger--Colding \cite{CheegerColding96}, and could be expected to lead to important results for Ricci limit spacetimes, which are examples of spaces satisfying the Lorentzian $\mathsf {TCD}$ condition introduced in \cite{beran2024nonlinear}. Based on the recent elliptic approach to Lorentzian splitting theorems \cite{QuintetI, QuintetII, CaponioOhanyanOhta}, a more detailed study and understanding of the (degenerate) elliptic $p$-d'Alembertian operator is likely to prove relevant.

\section*{Data availability statement}

There is no data associated with this manuscript.

\section*{Conflict of interest statement}

The authors confirm that no conflict of interest exists for this work.

\section*{Use of AI declaration}

All of the main results, arguments and techniques presented herein were developed solely by the authors.  ChatGPT was initially used only for mechanical calculations, such as computing coefficients in a Taylor series.  After a complete draft had been produced,  during the editing and revising process discussions with ChatGPT were used for checking the arguments and refining the writing.  The authors however take full responsibility for the final contents.

\section*{Acknowledgments}

The authors are grateful for the hospitality of the Department of Mathematics of the University of Vienna during the course of the 2nd Workshop on ``A new geometry for Einstein's theory of relativity and beyond'' and the workshop on ``Curvature, Optimal Transport and Gravity", and for the hospitality of the Bernoulli Centre at EPFL during the workshop ``Optimal Transport and Metric Geometry across Structures".

NG and MZ were supported by the Italian Ministry of University and Research (MUR) under the 'Fondo Italiano per la Scienza' (FIS 3) program, PI: prof. Nicola Gigli, Project Title: Modern challEnges in Geometric Analysis - MEGA, CUP: G53C25000920001.

MG acknowledges support from the Deutsche Forschungsgemeinschaft (DFG, German Research Foundation) under Germany’s Excellence Strategy – EXC 2121 "Quantum Universe" – 390833306.

RJM's research is supported in part by the Canada Research Chairs program CRC--2020--00289, 
Natural Sciences and Engineering Research Council of Canada Discovery Grant RGPIN-2020--04162 and 2026--04906, and Toronto's Fields Institute for the Mathematical Sciences.

EW's research is supported by Natural Sciences and Engineering Research Council of Canada Discovery Grant RGPIN--2022--03440. 

This research was funded in part by the Austrian Science Fund (FWF) [Grant DOI\\10.55776/EFP6 and 10.55776/J4913]. For open access purposes, the authors have applied a CC BY public copyright license to any author accepted manuscript version arising from this submission.

\addcontentsline{toc}{section}{References}
\bibliography{Bibliography} 
\bibliographystyle{abbrv}

\end{document}